\theoremstyle{plain}
\newtheorem{theorem}{Theorem}[section]
\newtheorem*{theorem*}{Theorem}
\newtheorem{lemma}[theorem]{Lemma}
\newtheorem{corollary}[theorem]{Corollary}
\newtheorem{proposition}[theorem]{Proposition}
\newtheorem*{aproposition}{Proposition \ref{propReductionAschbacherConjecture}}
\newtheorem*{atheorem}{Theorem \ref{mainTheorem}}
\newtheorem*{aconjecture}{Aschbacher's Conjecture}
\theoremstyle{definition}
\newtheorem{example}[theorem]{Example}
\theoremstyle{remark}
\newtheorem{remark}[theorem]{Remark}
 \newcommand\ZZ{{\mathbb{Z}}}
 \def\A{{\mathcal A}}
 \def\B{{\mathcal B}}
 \def\K{{\mathcal K}}
 \def\N{{\mathcal N}}
 \def\S{{\mathcal S}}
 \def\U{{\mathcal U}}
 \newcommand{\tp}{{\mathscr{T}}}
 \def\Syl{{\text{Syl}}}
 \def\Max{{\text{Max}}}
 \def\Min{{\text{Min}}}
 \def\join{*}
 \def\Aut{{\text{Aut}}}
 \newcommand{\Hom}{\operatorname{Hom}}
 \def\Out{{\text{Out}}}
 \def\Inndiag{{\text{Inn diag}}}
\newcommand{\normal}{\trianglelefteq}
\newcommand\gen[1]{\left\langle#1\right\rangle}
      \def\@setcopyright{}
      \def\serieslogo@{}
\begin{document}

\title [The fundamental group of the $p$-subgroup complex]{The fundamental group of the $p$-subgroup complex}
   \author{El\'ias Gabriel Minian}
   \author{Kevin Iv\'an Piterman}
   \address{Departamento  de Matem\'atica \\IMAS-CONICET\\
 FCEyN, Universidad de Buenos Aires. Buenos Aires, Argentina.}
\email{gminian@dm.uba.ar ; kpiterman@dm.uba.ar}

\thanks{Partially supported by grants UBACyT 20020160100081BA}

   \begin{abstract}
   
   We study the fundamental group of the $p$-subgroup complex of a finite group $G$. We show first that $\pi_1(\A_3(A_{10}))$ is not a free group (here $A_{10}$ is the alternating group on $10$ letters). This is the first concrete example in the literature of a $p$-subgroup complex with non-free fundamental group. We prove that, modulo a well-known conjecture of M. Aschbacher, $\pi_1(\A_p(G)) = \pi_1(\A_p(S_G)) * F$, where $F$ is a free group and $\pi_1(\A_p(S_G))$ is free if $S_G$ is not almost simple. Here $S_G = \Omega_1(G)/O_{p'}(\Omega_1(G))$. This result essentially reduces the study of the fundamental group of $p$-subgroup complexes to the almost simple case. We also exhibit various families of almost simple groups whose $p$-subgroup complexes have free fundamental group.
   \end{abstract}

\subjclass[2010]{20J99, 20J05, 20D20, 20D30, 05E18, 06A11.}

\keywords{$p$-subgroups, posets, finite groups, fundamental group.}

\maketitle
\section{Introduction}

Let $G$ be a finite group and $p$ a prime number dividing its order. The posets $\S_p(G)$ and $\A_p(G)$ consist of, respectively, the non-trivial $p$-subgroups of $G$ and the non-trivial elementary abelian $p$-subgroups of $G$, ordered by inclusion. In \cite{Qui78} D. Quillen studied homotopical properties of the poset $\A_p(G)$ by means of its order complex (which is now called the Quillen complex of $G$ at $p$). Recall that the order complex of a finite poset $X$ is the simplicial complex $\K(X)$ with simplices the non-empty chains of $X$. Quillen showed that the inclusion $\A_p(G)\hookrightarrow \S_p(G)$ is a homotopy equivalence at the level of complexes and conjectured that $G$ has a non-trivial normal $p$-subgroup when $\K(\A_p(G))$ (or equivalently, $\K(\S_p(G))$) is contractible. Although Quillen's conjecture remains open, significant progress has been made. In the early nineties Aschbacher and Smith obtained the most relevant partial confirmation of the conjecture so far \cite{AS93}.

 The homotopy type of the $p$-subgroup complex is, in general, not known. Quillen showed that $\K(\A_p(G))$ has the homotopy type of a bouquet of spheres for a group $G$ of Lie type in characteristic $p$ or for $G = LA$, where $L$ is a solvable $p'$-group and $A$ is an elementary abelian $p$-group acting on $L$ (see \cite{Qui78}).  In \cite{PW00} Pulkus and Welker obtained a wedge decomposition for $\A_p(G)$ when $G$ has a solvable normal $p'$-subgroup. There is a question in \cite{PW00}, attributed to Th\'evenaz, whether the $p$-subgroup complex always has the homotopy type of a bouquet of spheres (of possibly different dimensions). In 2004 Shareshian gave the first example of a group whose $p$-subgroup complex is not homotopy equivalent to a bouquet of spheres. Concretely, he showed that there is torsion in the second homology group of $\A_3(S_{13})$ (see \cite{Sha04}). Here, $S_n$ denotes the symmetric group on $n$ letters.

The fundamental group of the $p$-subgroup complex was first studied by M. Aschbacher. He provided some algebraic conditions for $\A_p(G)$ to be simply connected, modulo a well-known conjecture for which there is considerable evidence (see \cite[Theorems 1 \& 2]{Asc93}). Later, K. Das studied the simple connectivity of the $p$-subgroups complexes of some groups $G$ of Lie type (see \cite{Das95,Das98,Das00}). In \cite{Kso03,Kso04}, Ksontini investigated the fundamental group of $\A_p(S_n)$ and proved that it is free except possibly for $n = 3p$ or $n=3p+1$. In \cite{Sha04}, Shareshian extended Ksontini's results and showed that the fundamental group of $\A_p(S_n)$ is also free for $n = 3p$.  All these results could suggest that the fundamental group of $\A_p(G)$ is always free. We will show that this holds for solvable groups (see Corollary \ref{coroSolvable} below) and, modulo Aschbacher's conjecture, for $p$-solvable groups (see Corollary \ref{coroPsolvable} below). In fact, there are only few known examples of $p$-subgroup complexes which are not homotopy equivalent to a bouquet of spheres, and Shareshian's counterexample $\A_3(S_{13})$ fails in the second homology group but it does have free fundamental group.  Surprisingly we found that the fundamental group of $\A_3(A_{10})$ is not free (here $A_{10}$ is the alternating group on $10$ letters). Its fundamental group is isomorphic to a free product of the free group on $25200$ generators and a non-free group whose abelianization is $\ZZ^{42}$. This is the smallest group $G$ with $\pi_1(\A_p(G))$ non-free for some $p$. Note that the integral homology of $\A_3(A_{10})$ ($=\A_3(S_{10})$) is free abelian (cf. \cite[p.306]{Sha04}), so in this case the obstruction to being a bouquet of spheres relies on the fundamental group and not on the homology. 

We will show that $p$-subgroup complexes with non-free fundamental group are rather exceptional. The first of our main results asserts that, modulo Aschbacher's conjecture, the study of freeness of $\pi_1(\A_p(G))$ reduces to the almost simple case. Concretely, we prove the following.

\begin{atheorem}


Let $G$ be a finite group and $p$ a prime dividing $|G|$. Assume that Aschbacher's conjecture holds. Then there is an isomorphism $\pi_1(\A_p(G)) = \pi_1(\A_p(S_G)) * F$, where $F$ is a free group. Moreover, $\pi_1(\A_p(S_G))$ is a free group (and therefore $\pi_1(\A_p(G))$ is free) except possibly if $S_G$ is almost simple.
\end{atheorem}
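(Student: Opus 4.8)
The plan is to establish the two assertions separately, peeling off normal subgroups of $G$ until only the internal structure of $S_G$ is at stake.

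First I would replace $G$ by $\Omega_1(G)$. Every elementary abelian $p$-subgroup is generated by elements of order $p$ and so lies in $\Omega_1(G)$; hence $\A_p(G) = \A_p(\Omega_1(G))$ as posets, and the fundamental groups coincide. Writing $H = \Omega_1(G)$ and $N = O_{p'}(H)$, so that $H/N = S_G$, the free-product decomposition comes from a wedge decomposition in the spirit of Pulkus--Welker \cite{PW00}: modulo Aschbacher's conjecture --- this is the content of Proposition \ref{propReductionAschbacherConjecture} --- the complex $\A_p(H)$ is homotopy equivalent to $\A_p(S_G)$ with a bouquet of spheres wedged on. Applying van Kampen's theorem to this wedge (the $p$-rank one case, where $\A_p$ is discrete, being trivially free), the spheres of dimension at least $2$ are simply connected and drop out, while the circles assemble into a free group $F$, giving $\pi_1(\A_p(G)) = \pi_1(\A_p(S_G)) * F$. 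Aschbacher's conjecture enters precisely here: the classical Pulkus--Welker argument needs $N$ to be solvable, whereas $O_{p'}(H)$ need not be, and the conjecture is what supplies the decomposition for an arbitrary normal $p'$-subgroup.

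For the freeness statement I would analyze $S_G$ via its generalized Fitting subgroup, using that $O_{p'}(S_G) = 1$ by construction. If $O_p(S_G) \neq 1$, then $\A_p(S_G)$ is contractible by Quillen \cite{Qui78}, so $\pi_1 = 1$ is free. Otherwise $F(S_G) = O_p(S_G) \times O_{p'}(S_G) = 1$, whence $F^*(S_G) = E(S_G)$; since $Z(E(S_G))$ is a normal abelian, hence nilpotent, subgroup it lies in $F(S_G) = 1$, so every component is simple and $F^*(S_G) = L_1 \times \cdots \times L_t$ is a direct product of nonabelian simple groups. If $t = 1$ then $S_G$ is almost simple, the case excluded from the statement. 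If $t \geq 2$ then $S_G$ is not almost simple, and I would argue freeness as follows: when $S_G = F^*(S_G)$ Quillen's join formula gives $\A_p(S_G) \simeq \A_p(L_1) \ast \cdots \ast \A_p(L_t)$, and a join of nonempty complexes is a suspension, whose fundamental group is always free; when $S_G$ properly contains $F^*(S_G)$ one invokes Aschbacher's simple-connectivity criteria \cite[Theorems 1 \& 2]{Asc93}, which hold under his conjecture and cover these multi-component configurations.

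I expect the main obstacle to be the first stage: promoting the Pulkus--Welker wedge decomposition to a possibly non-solvable $O_{p'}(H)$, and bookkeeping exactly which wedge summands are $1$-spheres so that the free factor $F$ is correctly identified (this is the work of Proposition \ref{propReductionAschbacherConjecture}). The secondary difficulty lies in the case $t \geq 2$ with $S_G$ strictly larger than $F^*(S_G)$, where the components $L_i$ may be permuted nontrivially; controlling this action is why the direct-product join argument must be supplemented by the simple-connectivity results.
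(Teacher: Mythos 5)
Your first stage coincides with the paper's (Section \ref{sectionfour}): replace $G$ by $\Omega_1(G)$, apply the Pulkus--Welker decomposition restricted to $\A_p(G)^2$ (Lemma \ref{lemmaPW}), and use Aschbacher's conjecture to make the rank-$3$ wedge summands simply connected (Lemma \ref{lemmaJoinApgLinks}), yielding Corollary \ref{coroQuotientByOpprime}. Two small inaccuracies there: this is not the content of Proposition \ref{propReductionAschbacherConjecture} (that proposition reduces the conjecture itself to $p$-rank $3$), and the wedge summands are joins $\A_p(BN) * \A_p(G/N)^2_{>\overline{B}}$ rather than literal spheres --- but their fundamental groups are free (Proposition \ref{propFreePi1Join}) or trivial, which is all one needs. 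Your reduction of $F^*(S_G)$ to a direct product $L_1\times\cdots\times L_t$ of nonabelian simple groups, and the case $S_G = F^*(S_G)$ via Quillen's join formula (Proposition \ref{propQuillenJoin}), also match the paper's conditions (C1)...(C5) and Remark \ref{remarksobrecond}.

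The genuine gap is in the case you yourself flag as the ``secondary difficulty'': $t\geq 2$ with $S_G$ strictly larger than $F^*(S_G)$. You propose to settle it by invoking Aschbacher's simple-connectivity criteria \cite[Theorems 1 \& 2]{Asc93}, but these cannot ``cover these multi-component configurations,'' because in the critical configurations $\A_p(S_G)$ is \emph{not} simply connected. Aschbacher's Theorem 1 assumes the links $\A_p(G)_{>T}$ are connected for every subgroup $T$ of order $p$, and even then has an exceptional case in which simple connectivity is not concluded (the paper deals with it in Proposition \ref{connectedLinksCase}); the configurations that escape entirely are exactly those where every component $L_i$ has a strongly $p$-embedded subgroup and some subgroup of order $p$ permutes the components regularly. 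The paper shows (Theorems \ref{theorem2Components} and \ref{theoremPComponents}, via \cite[(10.3), (10.5)]{Asc93}) that under (C1)...(C5) a non-almost-simple group falls precisely into these cases: either $p=2$ and $G\simeq L\wr C_2$ with $L$ of Lie type of Lie rank $1$ in characteristic $2$, or $p$ odd with $r=p$ components permuted regularly by a subgroup of order $p$. In the wreath case $\pi_1(\A_2(G))$ is free of rank $(|\Syl_2(L)|-1)(|\Syl_2(L)|-1+|L|)$, which is nonzero, so no simple-connectivity theorem can possibly close this case; freeness must be proved directly. The paper does this by covering $\A_p(G)$ by $\N=\{X\in\A_p(G): X\cap H\neq 1\}$ (with $H=F^*(G)$, resp.\ $H=\bigcap_i N_G(L_i)$), which deformation retracts onto $\A_p(H)$ (Remark \ref{remarkInflationSubsapce}), together with the stars $\A_p(G)_{\geq A}$ of the order-$p$ subgroups $A$ in the complement: the key points are that this complement is an anti-chain, that the intersections $\N\cap\A_p(G)_{\geq A}\simeq\A_p(C_H(A))$ (Lemma \ref{lemmaLinkRetract}) have simply connected components (Remark \ref{remarkBenderGroups}), and then van Kampen, resp.\ Proposition \ref{propCollapsingSCSubspace}, gives freeness. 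This argument --- the heart of Section \ref{sectionfive} --- is missing from your proposal.
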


 Here $S_G = \Omega_1(G)/O_{p'}(\Omega_1(G))$. Recall that, for a fixed prime $p$, $\Omega_1(G) = \gen{x\in G: x^p = 1}$, $O_p(G)$ is the largest normal $p$-subgroup of $G$ and $O_{p'}(G)$ is the largest normal $p'$-subgroup of $G$ (i.e. of order prime to $p$).  We always can assume that $\A_p(G)$ is connected. Note that, in that case, $\A_p(S_G)$ is also connected since the induced map $\A_p(G)\to \A_p(S_G)$ is surjective. 
 
 In fact, in Theorem \ref{mainTheorem} we only need Aschbacher's conjecture to hold for the $p'$-simple groups $L$ involved in $O_{p'}(\Omega_1(G))$.  Here, a group $H$ is involved in $G$ if $H \simeq K/N$ for some $N\normal K\leq G$. Moreover, we only need the conjecture to hold for $p$-rank $3$ (see Proposition \ref{propReductionAschbacherConjecture} below).  Recall that the $p$-rank of a group $H$ is $m_p(H) = \max\{r : A\in\A_p(H), |A| = p^r\}$.


We recall now Aschbacher's conjecture \cite{Asc93}.

\begin{aconjecture}
Let $G$ be a finite group such that $G = AF^*(G)$, where $A$ is an elementary abelian $p$-subgroup of rank $r\geq 3$ and $F^*(G)$ is the direct product of the $A$-conjugates of a simple component $L$ of $G$ of order prime to $p$. Then $\A_p(G)$ is simply connected.
\end{aconjecture}

Here $F^*(G)$ denotes the generalized Fitting subgroup of $G$ (see Section \ref{sectionthree} or \cite{AscFGT}).
Aschbacher proved the conjecture for all simple groups $L$ except for Lie type groups with Lie rank 1 and the sporadic groups which are not Mathieu groups (see \cite[Theorem 3]{Asc93}).

 Here below we list some immediate consequences of Theorem \ref{mainTheorem}.

If $G$ is $p$-solvable, $O_p(S_G)\neq 1$, so $S_G$ is not almost simple. In fact, $\A_p(S_G)$ is contractible by Proposition \ref{propOpGContractible} below. Let $O_{p',p}(G)$ be the unique normal subgroup of $G$ containing $O_{p'}(G)$ such that $O_{p',p}(G)/O_{p'}(G) = O_p(G/O_{p'}(G))$.

\begin{corollary}\label{coroPsolvable}
Assume that Aschbacher's conjecture holds. If $O_{p'}(\Omega_1(G)) < O_{p',p}(\Omega_1(G))$, then $\pi_1(\A_p(G))$ is free. In particular, this holds for $p$-solvable groups.
\end{corollary}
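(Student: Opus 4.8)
The plan is to recognize that the hypothesis $O_{p'}(\Omega_1(G)) < O_{p',p}(\Omega_1(G))$ is nothing but a restatement of $O_p(S_G)\neq 1$, and then to feed this into Theorem \ref{mainTheorem}. First I would unwind the definition of $O_{p',p}$. Writing $H = \Omega_1(G)$, one always has $O_{p'}(H)\normal O_{p',p}(H)$, and by definition the quotient is $O_{p',p}(H)/O_{p'}(H) = O_p(H/O_{p'}(H)) = O_p(S_G)$. Hence the strict inclusion $O_{p'}(H) < O_{p',p}(H)$ holds if and only if this quotient is nontrivial, that is, if and only if $O_p(S_G)\neq 1$. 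So the hypothesis is exactly the assertion that $S_G$ has a nontrivial normal $p$-subgroup.

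With this translation in hand the conclusion is almost immediate. Since $O_p(S_G)\neq 1$, Proposition \ref{propOpGContractible} shows that $\A_p(S_G)$ is contractible, so in particular $\pi_1(\A_p(S_G)) = 1$, which is (trivially) free. Equivalently, a group with a nontrivial normal $p$-subgroup cannot be almost simple: an almost simple group $S$ has $F^*(S) = T$ a nonabelian simple group, and then $O_p(S)\leq F^*(S) = T$ is a normal $p$-subgroup of $T$, hence trivial. Thus the exceptional case of Theorem \ref{mainTheorem} does not occur for $S_G$, and $\pi_1(\A_p(S_G))$ is free. Either way, invoking Theorem \ref{mainTheorem} (whose standing hypothesis, Aschbacher's conjecture, we are assuming) gives $\pi_1(\A_p(G)) = \pi_1(\A_p(S_G)) * F = F$, a free group.

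It remains to check the final clause, that $p$-solvable groups satisfy the hypothesis. Here I would note that $H = \Omega_1(G)$, being a subgroup of the $p$-solvable group $G$, is itself $p$-solvable, and therefore so is the quotient $S_G = H/O_{p'}(H)$. By maximality of $O_{p'}(H)$ we have $O_{p'}(S_G) = 1$, and since $p \mid |G|$ there is an element of order $p$ lying in $H$, so $p \mid |S_G|$ and in particular $S_G \neq 1$. Now the upper $p$-series of a $p$-solvable group shows that a nontrivial $p$-solvable group with trivial $O_{p'}$ must have nontrivial $O_p$: if both $O_{p'}(S_G)$ and $O_p(S_G)$ vanished, the first two terms of the series would already be trivial, forcing $S_G = 1$. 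Hence $O_p(S_G)\neq 1$, which is the hypothesis, and the previous paragraph applies. I do not expect a genuine obstacle in this argument: once Theorem \ref{mainTheorem} and Proposition \ref{propOpGContractible} are available, the statement is essentially bookkeeping, the only points requiring care being the identification of $O_{p',p}(\Omega_1(G))/O_{p'}(\Omega_1(G))$ with $O_p(S_G)$ and the elementary fact about the upper $p$-series of a $p$-solvable group used in the last step.
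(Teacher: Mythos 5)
Your proposal is correct and follows essentially the same route as the paper: identify the hypothesis $O_{p'}(\Omega_1(G)) < O_{p',p}(\Omega_1(G))$ with $O_p(S_G)\neq 1$, conclude via Proposition \ref{propOpGContractible} that $\A_p(S_G)$ is contractible (so $\pi_1(\A_p(S_G))$ is trivial and $S_G$ is not almost simple), and then apply Theorem \ref{mainTheorem}. Your verification that $p$-solvable groups satisfy the hypothesis (via $O_{p'}(S_G)=1$ and the fact that a non-trivial $p$-solvable group cannot have both $O_p$ and $O_{p'}$ trivial) is exactly the fact the paper leaves implicit.
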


If we take an abelian simple group $L$ in the hypotheses of Aschbacher’s conjecture, then the conjecture holds for $L$ by \cite[Theorem 11.2]{Qui78}. Since there are no non-abelian simple groups involved in a solvable group G, Aschbacher's conjecture does not need to be assumed for solvable groups.

\begin{corollary}\label{coroSolvable}
If $G$ is solvable then $\pi_1(\A_p(G))$ is a free group.
\end{corollary}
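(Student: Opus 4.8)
The plan is to derive this directly from Corollary \ref{coroPsolvable} (equivalently from Theorem \ref{mainTheorem}), the point being that the only instances of Aschbacher's conjecture actually invoked in the solvable case are already established theorems. A solvable group is $p$-solvable for every prime $p$ dividing $|G|$, since each composition factor is cyclic of prime order and hence is either a $p$-group or a $p'$-group. Thus Corollary \ref{coroPsolvable} would immediately give that $\pi_1(\A_p(G))$ is free \emph{provided} Aschbacher's conjecture holds. The whole task is therefore to show that here the conjecture need not be assumed.

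To this end I would recall, from the remark following Theorem \ref{mainTheorem}, that the conjecture is only used for the $p'$-simple groups $L$ involved in $O_{p'}(\Omega_1(G))$. Since solvability is inherited by subgroups and quotients, every group involved in the solvable group $G$ is again solvable; in particular any simple group $L$ involved in $O_{p'}(\Omega_1(G))$ is a simple solvable group, hence cyclic of prime order and thus abelian. So the only simple groups that can occur in the hypotheses of Aschbacher's conjecture in this setting are abelian. For such $L$ (say cyclic of prime order $\ell\neq p$), the subgroup $F^*(G)$ generated by the $A$-conjugates of $L$ is an elementary abelian $\ell$-group, so $G = A F^*(G)$ is a solvable $p'$-group extended by the elementary abelian $p$-group $A$ of rank $r\geq 3$. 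By \cite[Theorem 11.2]{Qui78} the complex $\A_p(G)$ is then homotopy equivalent to a wedge of $(r-1)$-spheres; as $r\geq 3$ these spheres have dimension at least $2$, so $\A_p(G)$ is simply connected and Aschbacher's conjecture holds unconditionally for abelian $L$.

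Combining these two observations, every appeal to Aschbacher's conjecture in the proof of Corollary \ref{coroPsolvable} is legitimate when $G$ is solvable, so the conclusion that $\pi_1(\A_p(G))$ is free holds without any extra hypothesis. I expect the only mildly delicate step to be the bookkeeping that the simple sections relevant to the argument are precisely the abelian ones already handled by Quillen; this is not a genuine obstacle, since the real content has been absorbed into the stronger Theorem \ref{mainTheorem} and into Quillen's wedge-of-spheres computation.
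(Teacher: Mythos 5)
Your proposal is correct and follows essentially the same route as the paper: solvable implies $p$-solvable, the conjecture is only needed for the $p'$-simple groups involved in $O_{p'}(\Omega_1(G))$, which for solvable $G$ are abelian, and for abelian $L$ the relevant groups $AF^*(G)$ fall under Quillen's Theorem 11.2, giving simple connectivity in rank $r\geq 3$. This is exactly the argument the paper gives in the paragraph preceding the corollary.
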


By Feit-Thompson, if $p = 2$ then there are no non-abelian $p'$-simple group. In consequence, Aschbacher's conjecture does not need to be assumed and we get the following corollary.

\begin{corollary}
There is an isomorphism $\pi_1(\A_2(G)) \simeq \pi_1(\A_2(S_G)) * F$, where $F$ is a free group. Moreover, $\pi_1(\A_2(S_G))$ is a free group (and therefore $\pi_1(\A_2(G))$ is free) except possibly if $S_G$ is almost simple.
\end{corollary}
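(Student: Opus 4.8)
The plan is to deduce the statement directly from Theorem \ref{mainTheorem}, the only real work being to show that for $p=2$ the hypothesis on Aschbacher's conjecture is automatically satisfied and hence may be dropped.

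First, I would recall from Proposition \ref{propReductionAschbacherConjecture} and the remarks following Theorem \ref{mainTheorem} that the full conjecture is not needed: it suffices that Aschbacher's conjecture hold for the $p'$-simple groups $L$ involved in $O_{p'}(\Omega_1(G))$ (and in fact only for $p$-rank $3$). So I would reduce the problem to verifying the conjecture for every such component $L$ in the case $p=2$.

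Second, I would invoke the Feit--Thompson Odd Order Theorem. A $2'$-simple group is a non-trivial simple group of odd order; by Feit--Thompson every group of odd order is solvable, so the only simple groups of odd order are the cyclic groups $\ZZ/q$ of prime order $q$, which are abelian. Thus every $2'$-simple group $L$ involved in $O_{2'}(\Omega_1(G))$ is abelian, and for such $L$ Aschbacher's conjecture holds by \cite[Theorem 11.2]{Qui78} --- exactly the observation already used to obtain Corollary \ref{coroSolvable} in the solvable case.

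Third, since the conjecture now holds unconditionally for all the relevant groups $L$, I would apply Theorem \ref{mainTheorem} with $p=2$ without any extra hypothesis. This yields the isomorphism $\pi_1(\A_2(G)) \simeq \pi_1(\A_2(S_G)) * F$ with $F$ free, together with the freeness of $\pi_1(\A_2(S_G))$ (hence of $\pi_1(\A_2(G))$) whenever $S_G$ is not almost simple. I do not expect any genuine obstacle here: the substantive content is already contained in Theorem \ref{mainTheorem} and in the reduction of Proposition \ref{propReductionAschbacherConjecture}, and the only point to check is the elementary fact that no non-abelian simple group has odd order, which is precisely the Feit--Thompson theorem.
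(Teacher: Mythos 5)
Your proposal is correct and follows essentially the same route as the paper: the paper also dispenses with the Aschbacher hypothesis via Feit--Thompson (no non-abelian simple groups of odd order, so no non-abelian $2'$-simple groups) and then applies Theorem \ref{mainTheorem} with $p=2$. Your extra care in reducing to the $2'$-simple groups involved in $O_{2'}(\Omega_1(G))$ and handling abelian $L$ via \cite[Theorem 11.2]{Qui78} is harmless and mirrors the paper's own remarks surrounding Corollary \ref{coroSolvable}.
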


In Section \ref{sectionfour}, we use Pulkus-Welker's wedge decomposition \cite{PW00} to restrict Aschbacher's conjecture to the $p$-rank $3$ case.

\begin{aproposition}
If Aschbacher's conjecture holds for $p$-rank $3$, then it holds for any $p$-rank $r\geq 3$. Moreover, if the conjecture holds in $p$-rank $3$ for a $p'$-simple group $L$ then it holds in any $p$-rank $r\geq 3$ for $L$.
\end{aproposition}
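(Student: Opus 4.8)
The plan is to induct on the rank $r$, taking the rank $3$ case as the base (this is exactly the hypothesis), and to run the inductive step through the Pulkus--Welker wedge decomposition applied to the normal subgroup $N = F^*(G)$. First I would record the structure of the situation. Since the component $L$ has order prime to $p$, the generalized Fitting subgroup $N = F^*(G)$ is a normal $p'$-subgroup of $G$, and because $A$ is a $p$-group with $A\cap N = 1$ and $G = AN$, the projection $\bar q\colon G \to G/N$ carries $A$ isomorphically onto $G/N$. Thus $G/N$ is elementary abelian of rank $r$. The decisive consequence is that $\A_p(G/N) = \A_p(A)$ has a maximum element, namely $A$ itself, and is therefore contractible.

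Next I would feed the poset map $\bar q_*\colon \A_p(G) \to \A_p(A)$, $E\mapsto EN/N \cong E$, into the Pulkus--Welker decomposition. This expresses $\A_p(G)$ as $\A_p(A)$ wedged with summands indexed by the subgroups $B\in\A_p(A)$, each summand built from the link $\A_p(A)_{<B}$ and from the fibre over $B$, the latter controlled by the preimage $G_B = \bar q^{-1}(B) = B\,F^*(G)$. Since the fundamental group of a wedge is the free product of the fundamental groups of the summands, and $\A_p(A)$ is contractible, it suffices to prove that every summand is simply connected. Here the crucial numerical input is that, $A$ being elementary abelian, $\A_p(A)_{<B}$ is the complex of proper non-trivial subgroups of $B$, which is homotopy equivalent to a wedge of spheres of dimension $m_p(B)-2$ and is therefore $(m_p(B)-3)$-connected. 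This high connectivity makes the summands attached to subgroups of rank $\geq 4$ simply connected, and reduces the whole problem to the fibres $\A_p(G_B)$ attached to subgroups $B$ with $m_p(B)\leq 3$.

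The heart of the argument is the analysis of these residual fibres. The key observation is that when $B$ acts transitively on the set $\Omega$ of $A$-conjugates of $L$, the components of $G_B$ are precisely these conjugates, so $F^*(G_B) = F^*(G)$ and $G_B = B\,F^*(G_B)$ is again an instance of Aschbacher's configuration, now of rank $m_p(B)$ and with the same component $L$; since $m_p(G_B) = m_p(B)$, this instance has rank exactly $3$ in the critical case and is handled by the base hypothesis (and, for the intermediate ranks $3 < m_p(B) < r$ produced along the induction, by the inductive hypothesis for $L$). This is the only place where Aschbacher's conjecture in rank $3$ enters, which is exactly what yields the refined statement for a fixed $L$. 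For the remaining fibres --- those over subgroups of rank $\leq 2$, and those over subgroups acting intransitively on $\Omega$ --- I would exploit the $B$-orbit decomposition $F^*(G) = M_1\times\cdots\times M_t$ into $B$-invariant $p'$-subgroups: intransitivity gives $t\geq 2$ and splits $\A_p(G_B)$ as a join, forcing high connectivity, while the low-rank cases are treated directly. Assembling all cases shows every summand is simply connected, so $\pi_1(\A_p(G)) = 1$, which closes the induction.

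I expect the main obstacle to be precisely this last connectivity bookkeeping: pinning down the Pulkus--Welker fibre over each $B$ and verifying that, after discarding the summands already made simply connected by the link-connectivity estimate, the only genuinely obstructed summands reduce exactly to transitive rank-$3$ configurations covered by the hypothesis. In particular one must confirm that the fibres over the rank-$1$ and rank-$2$ subgroups are respectively non-empty and connected --- this is where the standing assumption that $\A_p(G)$ is connected, together with the join decomposition coming from intransitivity, has to be invoked with care --- and that the interplay between the number of components $|\Omega|$ and the rank $r$ never forces the reduction past rank $3$. Once these estimates are in place the decomposition delivers the claim for every $r\geq 3$, with the same $L$ tracked throughout.
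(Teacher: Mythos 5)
Your overall skeleton matches the paper's: a Pulkus--Welker wedge decomposition over $N=F^*(G)$, contractibility of $\A_p(A)$, connectivity estimates to kill the low-rank summands, and the dichotomy transitive (invoke the rank-$3$ conjecture for $L$) versus intransitive (split $N$ into $B$-invariant orbit products and induct) for the rank-$3$ fibres. But there is a genuine gap in the device you use to dispose of the subgroups of rank $\geq 4$. The wedge decomposition joins each fibre $\A_p(BN)$ with the \emph{upper} link $\A_p(G/N)_{>\overline{B}}$, not with the lower link $\A_p(A)_{<B}$ as you state. The lower-link formula is false in general: for the poset map $f\colon X\to Y$ with $Y=\{a<b\}$, $X=\{x_1,x_2<z\}$, $f(x_i)=a$, $f(z)=b$, the space $X$ is contractible while the lower-link formula predicts $S^0$; in your setting it would exhibit the rank-one fibres $\A_p(BN)$ (discrete sets of Sylow $p$-subgroups of $BN$, typically many points) as free-standing wedge summands of the \emph{connected} complex $\A_p(G)$, an immediate contradiction. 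So the Solomon--Tits connectivity of $\A_p(A)_{<B}$ is simply not available, and your reduction to rank $\leq 3$ collapses. Worse, with the correct upper-link form applied to the full poset, every summand with $B<A$ is a join with the contractible cone $\A_p(A)_{>B}$ (apex $A$), while the summand at $B=A$ is all of $\A_p(G)$, so the decomposition is vacuous; and its hypothesis --- that each inclusion $f^{-1}(\,\cdot_{<\overline{B}})\hookrightarrow f^{-1}(\,\cdot_{\leq\overline{B}})$ is null-homotopic --- would have to be verified at $B=A$, which is circular. (Note also that \cite{PW00} itself requires $N$ solvable, which $F^*(G)$ here is not; some replacement hypothesis must be checked, and you never address this.)

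The missing idea is truncation. The paper works with $\A_p(G)^2$, the subposet of elements of rank at most $3$, which has the same fundamental group as $\A_p(G)$ by Proposition \ref{propHeightReduction}. The index poset then becomes $\A_p(A)^2$, whose maximal elements have rank exactly $3$: the rank-$3$ fibres $\A_p(BN)$ appear as genuine free-standing summands (empty upper link), the rank-$1$ and rank-$2$ summands are simply connected joins (using Remark \ref{remarkConnected} and the connectivity of the truncated upper links $\A_p(A/B)^{2-m_p(B)}$), and the null-homotopy hypotheses of the decomposition (Lemma \ref{lemmaPW}, which replaces solvability of $N$ by simple connectivity of the rank-$3$ fibres) only involve fibres of rank $\leq 3$. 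Those rank-$3$ fibres are then handled exactly by your transitive/intransitive analysis, which is the part of your proposal that is sound and is where the rank-$3$ conjecture for the fixed group $L$ enters.
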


In Section \ref{sectionsix} we study freeness for some particular cases of almost simple groups. We do not need to assume Aschbacher's conjecture for these cases. In the following theorem we collect the results of Section \ref{sectionsix}.  We use the notations of the finite simple groups of \cite{GLS98}. In general, by a simple group we will mean a non-abelian simple group. 

\begin{theorem}\label{theoremAlmostSimpleFree}
Suppose that $L \leq G \leq \Aut(L)$, with $L$ a simple group. Then $\pi_1(\A_p(G))$ is a free group in the following cases:
\begin{enumerate}
\item $m_p(G)\leq 2$,
\item $\A_p(L)$ is disconnected,
\item $\A_p(L)$ is simply connected,
\item $L$ is simple of Lie type in characteristic $p$ and $p\nmid (G:L)$ when $L$ has Lie rank $2$,
\item $p = 2$ and $L$ has abelian Sylow $2$-subgroups,
\item $p = 2$ and $L = A_n$ (the alternating group),
\item $L$ is a Mathieu group,
\item $L = J_1$ or $J_2$,
\item $p \geq 3$ and $L = J_3$, $Mc$, $O'N$.
\end{enumerate}
\end{theorem}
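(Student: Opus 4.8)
The plan is to deduce the nine items from a short list of homotopy-theoretic reductions, using the classification of the relevant simple groups to match each case to one of them. First I would normalize the situation. For $G$ almost simple with socle $L$ one has $O_{p'}(\Omega_1(G))=1$ and $F^*(G)=L$, so $S_G=\Omega_1(G)$ is again almost simple with socle $L$; moreover the hypothesis on Aschbacher's conjecture in Theorem \ref{mainTheorem} is vacuous (there are no $p'$-simple groups involved in $O_{p'}(\Omega_1(G))=1$). Hence by Theorem \ref{mainTheorem} we may split off the free factor and assume $G=\Omega_1(G)=S_G$, so that $\pi_1(\A_p(G))$ is free iff $\pi_1(\A_p(S_G))$ is. I would run the whole argument by induction on $|G|$, which is what makes the disconnected case work. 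Two soft criteria then dispatch the bulk of the list. \emph{(a)} If $m_p(G)\leq 2$ a chain of elementary abelian $p$-subgroups has length at most two, so $\K(\A_p(G))$ is one-dimensional; being connected it is a graph, and $\pi_1(\A_p(G))$ is free. This is item (1). \emph{(b)} If $\A_p(L)$ is simply connected, then since $L=F^*(G)$ the inheritance of simple connectivity from $F^*(G)$ to $G$ \cite{Asc93} gives $\pi_1(\A_p(G))=1$, which is item (3).

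Next I would handle the disconnected and Lie-type items. For item (2), disconnectedness of $\A_p(L)$ is equivalent to $L$ having a strongly $p$-embedded subgroup $M$; the components of $\A_p(L)$ are permuted transitively by $L$, each being isomorphic to $\A_p(M)$. By induction $\pi_1(\A_p(M))$ is free, hence so is $\pi_1$ of each component; a van Kampen / Bass--Serre analysis of the $G$-action on the set of components (the edge data being simply connected or free) then gives that $\pi_1(\A_p(G))$ is free. For item (4), Quillen's theorem \cite{Qui78} gives $\A_p(L)\simeq\Delta$, the Tits building, a wedge of spheres of dimension $r-1$ where $r$ is the Lie rank. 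If $r\geq 3$ then $\Delta$ is simply connected and we are in case (b)/(3); if $r=1$ then $\Delta$ is discrete and we are in case (2). If $r=2$ then $\Delta$ is a connected graph, so $\pi_1(\A_p(L))$ is free; here the hypothesis $p\nmid(G:L)$ forces every $p$-subgroup of $G$ into the normal subgroup $L$ (all Sylow $p$-subgroups of $G$ lie in $L$), whence $\A_p(G)=\A_p(L)$ as posets and $\pi_1(\A_p(G))$ is free.

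The remaining items reduce, group by group, to the criteria above. For item (5) I would invoke Walter's classification of the simple groups with abelian Sylow $2$-subgroups, namely $\PSL_2(q)$, $J_1$, and the Ree groups ${}^2G_2(q)$: for $\PSL_2(q)$ the $2$-rank is at most $2$ (case (1)) or the Sylow $2$-subgroup is elementary abelian with trivial intersection (case (2)); $J_1$ has a trivial-intersection elementary abelian Sylow $2$-subgroup of rank $3$, giving case (2); and for ${}^2G_2(q)$ one checks directly that $\A_2(L)$ is disconnected or reduces to a one- or two-dimensional complex with free $\pi_1$. For item (6), with $G\in\{A_n,S_n\}$ and $p=2$, I would induct on $n$ via the Pulkus--Welker wedge decomposition \cite{PW00} together with explicit base cases, reducing freeness to the earlier criteria. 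For items (7)--(9) I would go through each pair (group, prime), reading $m_p$ off the known $p$-local structure \cite{GLS98}: the primes with $m_p\leq 2$ fall under (1), and when $m_p\geq 3$ either $\A_p(L)$ is disconnected (case (2)) or the rich $p$-local geometry forces $\A_p(L)$ simply connected (case (3)). For the groups on the list Aschbacher's conjecture is a \emph{theorem} \cite{Asc93}, and by Proposition \ref{propReductionAschbacherConjecture} only its $p$-rank $3$ instance is needed; this supplies the simple connectivity of the local $AF^*$-configurations that the analysis requires.

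The main obstacle is exactly the genuinely two-dimensional regime: $m_p(G)=3$ with $\A_p(L)$ connected but \emph{not} simply connected, which is precisely the situation that produced a non-free fundamental group for $A_{10}$ at $p=3$. There neither soft criterion applies, and one must show that the $2$-complex $\K(\A_p(G))$, though not simply connected, still has free $\pi_1$ --- equivalently, that every relation carried by a rank-$3$ subgroup already follows from the relations among the generating loops, i.e. that the links of the rank-$2$ subgroups (the local posets attached to the normalizers $N_G(A)$, or the $AF^*$-configurations of Aschbacher's conjecture) are simply connected. Carrying this out for the specific sporadic groups and for the alternating groups at $p=2$ is the heart of Section \ref{sectionsix}; the list in items (7)--(9) is precisely the set of sporadic groups whose relevant $p$-local geometry is understood and for which Aschbacher's conjecture is known, so that this last obstruction can be removed.
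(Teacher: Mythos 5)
Your proposal has several genuine gaps, and the most serious one sits at its foundation. The claim in step \emph{(b)} --- that simple connectivity of $\A_p(L)$ is ``inherited'' from $F^*(G)=L$ to $G$, giving $\pi_1(\A_p(G))=1$ --- is false as stated; no such general transfer exists. What \cite{Asc93} provides is Lemma \ref{lemmaConnectedLinks}: simple connectivity passes from a normal subgroup $N$ to $G$ only when every link $\A_p(C_N(T))$, for $T\leq G$ of order $p$, is \emph{connected}, and the failure of exactly this hypothesis is the whole difficulty. The paper's proof of case (3) (Theorem \ref{theoremDiscOrSC}) must invoke Aschbacher's classification \cite[(10.5)]{Asc93} of the configurations in which some $\A_p(C_L(T))$ is disconnected, and in the surviving configurations ($G_2(3)$ at $p=2$, $L^{\epsilon}_p(q)$ at $p>3$) it obtains only \emph{freeness}, via Proposition \ref{propCollapsingSCSubspace} applied to the inflated subposet $\{X : X\cap L\neq 1\}$ --- which is precisely why item (3) asserts that $\pi_1$ is free rather than trivial. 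Your trichotomy (rank $\leq 2$ / disconnected / simply connected) also collapses on concrete groups: $J_1$ at $p=2$ has $m_2=3$ and $\A_2(J_1)$ \emph{connected} ($J_1$ is not in the list of Theorem \ref{disconnectedCasesTheorem}) but not simply connected; the paper obtains a free group of rank $4808$ by computer. So your assertion that $J_1$ is a trivial-intersection, hence disconnected, instance of case (2) is wrong, and items (5) and (8) cannot be dispatched that way.

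The mechanisms you propose for items (6)--(9) also do not work. Pulkus--Welker \cite{PW00} requires a nontrivial normal $p'$-subgroup, and $A_n$, $S_n$ have none at $p=2$, so that induction never starts. The paper's actual route (Proposition \ref{propAnCase}) is different: it covers $\A_2(S_n)$ by the inflation $\N\simeq\A_2(A_n)$ together with the stars $\A_2(S_n)_{\geq A}$ of the odd involutions, analyzes $C_{A_n}(x)$ for odd involutions $x$ (contractible Quillen complex when $x$ has more than one transposition, $\simeq \A_2(S_s)$ when $x$ is a single transposition), and runs van Kampen \emph{backwards} from Ksontini--Shareshian's simple connectivity of $\A_2(S_n)$ (Theorem \ref{theoremKsontini}) to deduce simple connectivity of $\A_2(A_n)$ for $n\geq 9$, with $n=6,7,8$ done by computer. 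Likewise items (7)--(9) are not consequences of the soft criteria: the paper needs GAP computations ($J_1$, $J_2$, $M_{12}$, $M_{22}$), one-dimensionality of the Bouc poset $\B_p(G)$ via counts of $p$-radical subgroups ($J_3$, $Mc$, $O'N$ at $p=3$, using \cite{Kot97,UY02}), and ad hoc covering arguments ($M_{23}$ via conjugates of $M_{22}$; $\Aut(M_{12})$, $\Aut(M_{22})$ via centralizers of outer involutions from \cite{GLS98}). Finally, Aschbacher's conjecture plays no role in any of this --- it concerns components of order \emph{prime to} $p$, which cannot occur when $F^*(G)=L$ has order divisible by $p$; the paper states explicitly that Section \ref{sectionsix} does not assume it. Your closing paragraph correctly identifies the hard regime (connected but not simply connected, $p$-rank $3$), but it defers that regime to ``the heart of Section \ref{sectionsix}'' rather than proving anything about it, and that regime is where essentially all the content of the theorem lies.
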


From our base example $A_{10}$ (with $p=3$) and Theorem \ref{mainTheorem}, one can easily construct  an infinite number of examples of finite groups $G$ with non-free $\pi_1(\A_3(G))$, by taking extensions of $3'$-groups $H$ whose $3'$-simple groups involved satisfy Aschbacher's conjecture, by $A_{10}$. However $A_{10}$ is the unique known example so far of a simple group with non-free fundamental group. We do not know whether $\pi_1(\A_p(A_{3p+1}))$ is non-free for $p\geq 5$. It would be interesting to find new examples of simple groups $G$ (other than the alternating groups) with $\pi_1(\A_p(G))$ non-free. Besides the works of Aschbacher, Das, Ksontini and Shareshian mentioned above, we refer the reader to S. Smith's book \cite[Section 9.3]{Smi11} for more details on the fundamental groups of Quillen complexes and applications to group theory, such as uniqueness proofs. Also a recent work of J. Grodal  \cite{Gro16} relates the fundamental group of the $p$-subgroup complexes to modular representation theory of finite groups via the exact sequence 
$$1\to \pi_1(\S_p(G))\to\pi_1(\tp_p(G))\to G\to 1$$ 
(when $\S_p(G)$ is connected). Here $\tp_p(G)$ denotes the transport category, whose objects are the non-trivial $p$-subgroups of a (fixed) Sylow $p$-subgroup $S\leq G$, with  $\Hom_{\tp_p(G)}(P,Q) = \{ g
  \in G| {} P^g \leq Q\}$. In \cite[Remark 2.2]{Gro16} it is shown that the geometric realization of $\tp_p(G)$ is homotopy equivalent to the Borel construction $EG\times_G \S_p(G)$, and the exact sequence follows from the fibration sequence $\S_p(G)\to EG\times_G \S_p(G) \to BG$. Recall that, by Brown's ampleness theorem, the mod-$p$ cohomology of $EG\times_G \S_p(G)$ is isomorphic to the mod-$p$ cohomology of $G$ (see \cite{Bro94,Smi11}). We hope that the results of this article can shed more light on the topology of these objects.

In this paper we study the posets of $p$-subgroups topologically by means of their order complexes. We will say, for instance, that $\A_p(G)$ is contractible if its order complex  $\K(\A_p(G))$ is. Also, the homology groups and the fundamental group of the posets are those of their associated complexes.   Note that this is not the convention that we adopted in our previous articles \cite{MP18,Pit19}. In those papers, we handled the posets of $p$-subgroups as finite topological spaces, with an intrinsic topology, where the notion of homotopy equivalence is strictly stronger than in the context of simplicial complexes. However in this article we adopt the more usual convention (as, for example, in [Qui78, AS93, Asc93, PW00,    Sha04, Smi11]).

{\bf Acknowledgements.} We would like to thank Volkmar Welker, John Shareshian and Jesper Grodal for  helpful comments.
 

\section{A non-free fundamental group}\label{sectiontwo}

The fundamental group of the Quillen complex was first investigated by Aschbacher, who analyzed simple connectivity \cite{Asc93}. K. Das studied simple connectivity of the $p$-subgroups complexes of  groups of Lie type (see \cite{Das95,Das98,Das00}). In \cite{Kso03,Kso04}, Ksontini investigated the fundamental group of the Quillen complex of symmetric groups. Below we recall Ksontini's results. These results will be used in Proposition \ref{propAnCase}.

\begin{theorem}[{\cite{Kso03, Kso04}}]\label{theoremKsontini}
Let $G = S_n$ and let $p$ be a prime.
\begin{enumerate}
\item If $p$ is odd, then $\A_p(S_n) = \A_p(A_n)$. In this case, $\A_p(S_n)$ is simply connected if and only if $3p+2\leq n < p^2$ or $n\geq p^2+p$. If $p^2 \leq n < p^2+p$, then $\pi_1(\A_p(S_n))$ is free unless $p = 3$ and $n = 10$. If $n < 3p$ then $m_p(S_n) \leq 2$ and $\pi_1(\A_p(S_n))$ is free.
\item If $p = 2$, then $\A_2(S_n)$ is simply connected if and only if $n = 4$ or $n\geq 7$. In other cases, $\pi_1(\A_2(S_n))$ is a free group by direct computation.
\end{enumerate}
\end{theorem}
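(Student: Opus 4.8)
The plan is to analyze $\A_p(S_n)$ through Quillen's homotopy theory of $p$-subgroup posets, organizing everything around the action of an elementary abelian $p$-group $A\leq S_n$ on $\{1,\dots,n\}$. I would begin with the two elementary reductions. For odd $p$ a $p$-cycle is an even permutation, so every element of order $p$ lies in $A_n$, whence every member of $\A_p(S_n)$ is already contained in $A_n$ and $\A_p(S_n)=\A_p(A_n)$. For the rank count, the orbits of a $p$-element have size $1$ or $p$, so $m_p(S_n)=\lfloor n/p\rfloor$; in particular $n<3p$ forces $m_p(S_n)\leq 2$, the complex $\K(\A_p(S_n))$ is then at most one-dimensional, and a connected graph has free fundamental group. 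The same dimension bound disposes of the small-$n$ part of the $p=2$ statement, where the finitely many remaining values are settled by direct computation.

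The core is a covering-and-join argument that yields the simply connected (``if'') range. For each splitting $\{1,\dots,n\}=\Omega\sqcup\Omega'$ into nonempty blocks of sizes $a$ and $n-a$, the Young subgroup $S_a\times S_{n-a}$ gives a subcomplex which, by Quillen's join formula, is homotopy equivalent to $\A_p(S_a)*\A_p(S_{n-a})$. I would let $\D\subseteq\A_p(S_n)$ be the union over all proper blocks, i.e.\ the subgroups that are intransitive on $\{1,\dots,n\}$. Because a join $X*Y$ is $(\mathrm{conn}(X)+\mathrm{conn}(Y)+2)$-connected, an induction on $n$ combined with a nerve/Mayer--Vietoris analysis of this cover propagates connectivity upward; keeping track of the numerical bookkeeping of the join connectivities is precisely what produces the thresholds $3p+2$ and $p^2$. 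One obtains that $\D$ is simply connected once $n\geq 3p+2$, and that in the ranges $3p+2\leq n<p^2$ and $n\geq p^2+p$ the transitive subgroups attach to $\D$ without creating new $2$-dimensional relations, so that a van Kampen argument gives $\pi_1(\A_p(S_n))=1$ there.

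For the converse (``only if'') I would exhibit nontrivial fundamental group outside these ranges: when $m_p(S_n)\geq 2$ but $n<3p$ the one-dimensional complex contains cycles and so is not a tree, while for $p^2\leq n<p^2+p$ the regular rank-$2$ orbits on $p^2$ points create $1$-cycles that are not filled, detected either by an explicit loop or by computing $H_1\neq 0$ through the Young-subgroup decomposition. Carried out more carefully, these same computations give the freeness assertion in the band $p^2\leq n<p^2+p$: after the structural reduction the relevant presentation has generators coming from transitive subgroups and no nontrivial relators, so $\pi_1(\A_p(S_n))$ is free.

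The hard part is the narrow bands $n\leq 3p+1$ and $p^2\leq n<p^2+p$, where $\D$ fails to be simply connected and the transitive subgroups interact nontrivially; the delicate case is $n=3p+1$, with $m_p=3$, where rank-$3$ transitive and regular rank-$2$ configurations may glue cells into genuine relations. The decisive observation is that $n=3p+1$ falls into the regular range $p^2\leq n<p^2+p$ precisely when $p=3$, giving $(p,n)=(3,10)$; this is exactly the value excluded in the statement (and shown to be non-free in Section \ref{sectiontwo}). For every other $n$ in these bands---in particular $n=3p+1$ with $p\geq 5$, where $3p+1<p^2$ removes the regular orbits---one must check, after the join analysis, that the resulting presentation is relation-free, and it is here that the bulk of the casework and the finite computations for small primes reside.
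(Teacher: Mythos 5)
Your proposal cannot really be compared with a proof in the paper, because the paper contains none: Theorem \ref{theoremKsontini} is quoted from Ksontini \cite{Kso03,Kso04} (with the $n=3p$ case later settled by Shareshian, Theorem \ref{theoremShareshian}). So your sketch has to stand on its own, and it does not: every quantitatively decisive step --- that the nerve/Mayer--Vietoris induction over Young subgroups ``propagates connectivity upward'' with thresholds exactly $3p+2$, $p^2$ and $p^2+p$, that the regular rank-$2$ loops at $n\geq p^2$ are essential, that the presentations in the free bands have no relators --- is deferred with phrases like ``bookkeeping'', ``one must check'', ``carried out more carefully''. Those deferred steps \emph{are} the theorem; they occupy two research papers. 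Note also that the bookkeeping cannot come out the way you describe uniformly in $p$: for $p=2$ the ranges $3p+2\leq n<p^2$ and $n\geq p^2+p$ would give simple connectivity for all $n\geq 6$, whereas part (2) asserts that $\A_2(S_6)$ is \emph{not} simply connected; and the infinitely many cases $n\geq 7$ of part (2) cannot be ``settled by direct computation''.

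Two concrete mathematical gaps. First, the ``only if'' direction is incomplete: for $p\geq 5$ the values $n=3p$ and $n=3p+1$ lie outside both simply connected ranges, so you must prove $\pi_1\neq 1$ there, but your argument only treats $n<3p$ and $p^2\leq n<p^2+p$; moreover at $n=2p$ ($p\geq 5$) the complex is in fact disconnected (Theorem \ref{disconnectedCasesTheorem}, the $A_{2p}$ case), so ``connected one-dimensional complex with cycles'' is not even the right picture at the bottom of that range. Second, and most seriously, you assert that for $n=3p+1$ with $p\geq 5$ the same analysis shows the presentation is relation-free, i.e.\ that $\pi_1(\A_p(S_{3p+1}))$ is free. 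The theorem makes no such claim, and the paper states explicitly that it is unknown whether $\pi_1(\A_p(A_{3p+1}))$ (which equals $\pi_1(\A_p(S_{3p+1}))$ for odd $p$) is free when $p\geq 5$; the one resolved value in that band, $(p,n)=(3,10)$, is non-free by Section \ref{sectiontwo}. A sketch whose routine ``checking'' would resolve an open problem is strong evidence that the machinery described is not actually doing the work attributed to it.
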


 In \cite{Sha04} Shareshian extended Ksontini's results and showed that the fundamental group of $\A_p(S_n)$ is also free for $n = 3p$. 
 
\begin{theorem}[{\cite{Sha04}}]\label{theoremShareshian}
$\pi_1(\A_p(S_n))$ is free when $n = 3p$.
\end{theorem}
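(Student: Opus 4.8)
The plan is to reduce the computation to an explicit two-dimensional combinatorial complex and then analyze that complex directly. First I would dispose of the small primes: for $p=2$ and $p=3$ one has $3p=6$ and $3p=9$, and both $\A_2(S_6)$ and $\A_3(S_9)$ already fall under Theorem \ref{theoremKsontini} (the case $n=9$ lies in the range $p^2\le n<p^2+p$ with $n\neq 10$, and $n=6$ is covered by the direct computation for $p=2$). So I may assume $p\geq 5$. The key simplification here is that $p^2>3p$, so every non-trivial elementary abelian $p$-subgroup of $S_{3p}$ acts with all orbits of size $1$ or $p$; its rank is at most $3$, the maximal subgroups are exactly the rank-$3$ subgroups $E\cong(\ZZ/p)^3$ (the \emph{frames}), given by a partition of $\{1,\dots,3p\}$ into three blocks of size $p$ together with a regular cyclic group on each block, and every non-trivial subgroup lies below some frame. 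In particular $\K(\A_p(S_{3p}))$ is two-dimensional and its fundamental group depends only on the $2$-skeleton.

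Next I would apply the Nerve Lemma to the closed cover of $\K(\A_p(S_{3p}))$ by the subcomplexes $X_E=\K(\{A:1<A\le E\})$, where $E$ ranges over the frames. Each $X_E$ has a maximum and is therefore contractible, and a non-empty intersection $X_{E_0}\cap\cdots\cap X_{E_k}=\K(\{A:1<A\le \bigcap_i E_i\})$ is either empty (when $\bigcap_i E_i=1$) or again has a maximum and is contractible. Hence the cover is good and $\A_p(S_{3p})\simeq\N$, the nerve, whose vertices are the frames and whose simplices are the sets of frames with non-trivial common intersection. A short computation with orbit structures shows that a non-trivial element common to several frames forces those frames to share a common \emph{brick} --- a pair $(B,C)$ consisting of a $p$-block $B$ and a regular cyclic group $C$ on it --- so a set of frames spans a simplex of $\N$ precisely when the frames contain a common brick. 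Rephrasing $\N$ through its bricks (and one further application of the Nerve Lemma) identifies it with a two-dimensional matching complex $\M$: the vertices are the bricks, the edges are the pairs of bricks on disjoint blocks, and the triangles are the frames (triples of pairwise-disjoint bricks exhausting all $3p$ points). This is the point of contact with Shareshian's hypergraph matching-complex model.

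It then remains to prove that $\M$ has free fundamental group. Since three frames can share a brick, $\M$ is genuinely $2$-dimensional and I cannot hope for simple connectivity; the goal is instead to show $\M$ is homotopy equivalent to a wedge of circles and $2$-spheres, equivalently that no relation survives among the edge-generators once the triangle relators are imposed. I would attempt this by discrete Morse theory: build an acyclic matching that pairs every frame with one of its three edges using a fixed linear order on $\{1,\dots,3p\}$ (for instance, single out the block containing the smallest moved point together with the cyclic datum on it), so that $\M$ collapses onto a graph plus a family of critical $2$-cells whose attaching loops are null-homotopic. The matching must be engineered so that the choice of regular cyclic group on the distinguished block is consumed consistently across all frames sharing a given brick. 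This is exactly where the tightness of the case $n=3p$ enters: the frames exhaust all $3p$ points, leaving no free point, whereas at $n=3p+1$ the extra fixed point produces the looser intersection pattern responsible for the non-free behaviour of $\A_3(A_{10})$.

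The main obstacle is this last step. All of the earlier reductions are formal, and the freeness is genuinely a statement about the combinatorics of the matching complex $\M$. The difficulty is to produce an acyclic matching (or an equivalent shelling-type argument) that is simultaneously consistent on all frames through a fixed brick and fine enough to track the cyclic data, and then to verify that the surviving critical $2$-cells contribute only to $H_2(\M)$ and not to $\pi_1(\M)$ --- that is, to control the $2$-cycles of $\M$ without creating any torsion in the fundamental group.
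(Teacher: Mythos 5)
Your proposal is not a proof: it has a genuine gap, and the gap sits exactly where the theorem lives. First, note that the paper itself gives no argument for this statement; it is quoted from Shareshian \cite{Sha04}, so the comparison must be with what that citation actually contains. The chain of reductions you describe is correct and is essentially the known Ksontini--Shareshian setup: the small cases $n=6$ ($p=2$) and $n=9$ ($p=3$) are covered by Theorem \ref{theoremKsontini}; for $p\geq 5$ every non-trivial elementary abelian $p$-subgroup of $S_{3p}$ has rank at most $3$ (since $3p<p^2$ forces all orbits to have size $1$ or $p$); the maximal ones are the ``frames''; and your two nerve-lemma applications correctly identify $\A_p(S_{3p})$ up to homotopy with the $2$-dimensional complex $\M$ whose vertices are bricks (a $p$-block together with one of its $(p-2)!$ regular cyclic groups) and whose faces are sets of pairwise disjoint bricks. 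But this reduction only \emph{restates} the problem: the entire content of Shareshian's theorem is the proof that $\pi_1(\M)$ is free, and that is precisely the step you label ``the main obstacle'' and do not carry out. A proposal whose last step is the unproved assertion itself is not a proof attempt that can be completed by routine means.

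Moreover, the discrete Morse sketch you offer for that step fails on arithmetic grounds as stated. You propose ``an acyclic matching that pairs every frame with one of its three edges.'' Count cells: if $P$ is the number of unordered partitions of $\{1,\dots,3p\}$ into three $p$-blocks, then $\M$ has $3P\left((p-2)!\right)^2$ edges and $P\left((p-2)!\right)^3$ triangles, since a pair of disjoint blocks carries $\left((p-2)!\right)^2$ choices of cyclic data and the third block another $(p-2)!$. For $p\geq 5$ one has $(p-2)!>3$, so there are strictly more triangles than edges and no injective pairing of all frames to edges exists; each edge in fact lies in $(p-2)!$ triangles, so a large family of critical $2$-cells is unavoidable. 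Freeness of $\pi_1$ then amounts to showing that the attaching loops of all critical $2$-cells can be killed without imposing relations among the surviving edge generators --- which is exactly the non-trivial fact to be proved, and exactly what distinguishes $n=3p$ from $n=3p+1$, where (as the paper's computation of $\pi_1(\A_3(A_{10}))=\pi_1(\A_3(S_{10}))$ shows) freeness genuinely fails. Your closing remark that ``the frames exhaust all $3p$ points'' is a heuristic for why one might hope the two cases differ, not an argument; nothing in the proposal engages with the combinatorics that makes the $n=3p$ matching complex behave differently, which is the substance of \cite{Sha04}.
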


In \cite{Sha04} Shareshian gave the first example of a group whose $p$-subgroup complex is not homotopy equivalent to a bouquet of spheres: he showed that there is torsion in the second homology group of $\A_3(S_{13})$. However its fundamental group is free. Surprisingly we found that the fundamental group of $\A_3(A_{10})$ is not free. This is the first concrete known example of a Quillen complex with non-free fundamental group. In fact, $A_{10}$ is, so far, the unique known example of a simple group whose Quillen complex has non-free fundamental group. 

To compute $\pi_1(\A_3(A_{10}))$ we used the Bouc poset of non-trivial $p$-radical subgroups. Recall that $R\leq G$ is said to be $p$-radical if $R = O_p(N_G(R))$. Denote by $\B_p(G)$ the poset of non-trival $p$-radical subgroups of $G$. In \cite{Bou84, TW91} it is shown that $\B_p(G)\hookrightarrow \S_p(G)$ is a homotopy equivalence at the level of complexes. In particular, $\pi_1(\A_p(G)) = \pi_1(\S_p(G)) = \pi_1(\B_p(G))$. 

We calculated $\pi_1(\A_3(A_{10}))$ using GAP (see \cite{Gap}). We wrote two functions that can be used to compute the order complex of the Bouc poset $\B_p(G)$ of a given finite group $G$ at a given prime $p$, and its fundamental group. The codes are shown in the Appendix.

We found that $\pi_1(\A_3(A_{10}))$ is a free product of the free group on $25200$ generators and a non-free group on $42$ generators and $861$ relators whose abelianization is $\ZZ^{42}$. It does not have torsion elements but it has commuting relations. Note that the integral homology of $\A_3(A_{10})$ is free abelian (cf. \cite[p.306]{Sha04}). As a consequence of Theorem \ref{mainTheorem}, one can  construct  an infinite number of examples of finite groups $G$ with non-free $\pi_1(\A_3(G))$, by taking extensions of $3'$-groups whose $3'$-simple groups involved satisfy Aschbacher's conjecture, by $A_{10}$. As we mentioned in the introduction, it would be interesting to find other examples of simple groups with non-free fundamental group.

We were able to verify that $A_{10}$ is the smallest group with a Quillen complex with non-free $\pi_1$.  Note that, by Theorem \ref{mainTheorem}, we only need to verify freeness in almost simple groups (note also that Aschbacher's conjecture holds for groups of order less than the order of $A_{10}$). On the other hand, Theorem \ref{theoremAlmostSimpleFree} allowed us to discard many potential counterexamples. The remaining almost simple groups which are smaller than $A_{10}$ were checked by computer calculations.

\section{Notations and preliminary algebraic and topological results}\label{sectionthree}

We fix notations and recall some basic definitions. We refer the reader to Aschbacher's book \cite{AscFGT} for more details on finite group theory.

For a finite group $G$ and a fixed prime number $p$ divining its order $|G|$, denote by $\Omega_1(G) = \gen{x\in G:x^p=1}$. The $p$-rank of $G$, denoted by $m_p(G)$, is the maximal integer $r$ such that there exists an elementary abelian $p$-subgroup of $G$ of order $p^r$. Recall that $O_p(G)$ is the largest normal $p$-subgroup of $G$ and $O_{p'}(G)$ is the largest normal $p'$-subgroup of $G$.
If $T, H\leq G$, then $C_T(H)=\{x\in T: xy=yx\text{ for all }y\in H\}$ and $N_T(H) = \{x\in T: H^x = H\}$ are, respectively, the centralizer and the normalizer of $H$ in $T$. Here, $H^x = x^{-1}Hx$. 

  $F^*(G)$, $Z(G)$, $G'$, $\Phi(G)$ denote, respectively, the generalized Fitting subgroup, the center, the derived subgroup and the Frattini subgroup of $G$. By definition, $F^* (G) = F(G) E(G)$, where $F(G)$ is the Fitting subgroup of $G$ and $E(G)$ is the layer of $G$, i.e. the subgroup generated by the components of $G$ (the subnormal quasisimple subgroups). Recall that $C_G(F^*(G)) \leq F^*(G)$, $[F(G),E(G)] = 1$, and that $[L_1,L_2] = 1$ if $L_1,L_2$ are distinct components of $G$. By a simple group we will mean a non-abelian simple group. We write $L\leq G\leq\Aut(L)$, with $L$ a simple group, for an almost simple group $G$ with $F^*(G) = L$. 

Denote by $D_n$ the dihedral group of order $n$, $A_n$ the alternating group on $n$ letters, $S_n$ the symmetric group on $n$ letters and $C_n$ the cyclic group of order $n$. The symbol $G\rtimes T$ denotes a split extension of $G$ by $T$. We write $G\simeq H$ if $G$ and $H$ are isomorphic groups.

By the classification of the finite simple groups, a simple group is either an alternating group $A_n$, a group of Lie type or one of the $26$ sporadic group. Throughout this paper, we will follow the notation of \cite{GLS98} for the simple groups.

As we mentioned in the introduction, $\K(X)$ denotes the order complex of a finite poset $X$. We denote by $|\K(X)|$ the geometric realization of $\K(X)$. Any poset map $f:X\to Y$ (i.e. order preserving) induces a simplicial map $\K(f):\K(X)\to \K(Y)$ and thus, a continuous map $|\K(f)|$ between the geometric realizations. If $f,g:X\to Y$ are poset maps with $f\leq g$ (i.e. $f(x)\leq g(x)$ for all $x\in X$) or $f\geq g$, then $|\K(f)|$ and $|\K(g)|$ are homotopic (see \cite{Qui78}). We use the symbol $\simeq $ to denote a homotopy equivalence between topological spaces (or posets).

If $X$ is a poset and $a\in X$, set $X_{\geq a} = \{x\in X: x\geq a\}$. Analogously we define $X_{>a}$, $X_{\leq a}$, $X_{<a}$. Denote by $\Max(X)$ (resp. $\Min(X)$) the set of maximal (resp. minimal) elements of $X$. If $A,B\subseteq C$ are sets, then $B - A = \{b\in B: b\notin A\}$ denotes the complement of $A$ in $B$.

The following result follows immediately from \cite[Corollary 4.10]{BM12}. We include here an alternative proof.

\begin{proposition}\label{propCollapsingSCSubspace}
Let $X$ be a finite connected poset and let $Y\subseteq X$ be a subposet such that $X-Y$ is an anti-chain (i.e. $\forall \ x,x'\in X-Y$, $x$ and $x'$  are not comparable). If $Y$ is simply connected, then $\pi_1(X)$ is free.
\end{proposition}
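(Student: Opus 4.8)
The plan is to work with order complexes throughout, so that $\pi_1(X)$ means $\pi_1(|\K(X)|)$, and to realize $\K(X)$ as the result of attaching a contractible cone to $\K(Y)$ for each point of the anti-chain $X-Y$, tracking the effect on $\pi_1$ with the Seifert--van Kampen theorem. Write $X-Y=\{x_1,\dots,x_n\}$. The decisive structural observation is that the anti-chain hypothesis forces the local data of each $x_i$ to live inside $Y$: if $y<x_i$ or $y>x_i$ then $y$ is comparable to $x_i$, so $y\notin X-Y$ (as $y\neq x_i$ and $X-Y$ is an anti-chain); hence $X_{<x_i}$ and $X_{>x_i}$ are contained in $Y$. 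Consequently the link of $x_i$ in $\K(X)$ equals $\K(X_{<x_i})*\K(X_{>x_i})$, a subcomplex of $\K(Y)$, and the closed star $\overline{\mathrm{st}}(x_i)=x_i*\mathrm{lk}(x_i)$ is a cone (hence contractible) meeting $\K(Y)$ exactly along that link. Moreover the stars of distinct $x_i,x_j$ share no simplex outside $\K(Y)$, since no chain contains both $x_i$ and $x_j$. Setting $W_0=\K(Y)$ and $W_i=W_{i-1}\cup\overline{\mathrm{st}}(x_i)$, I obtain a filtration $\K(Y)=W_0\subseteq\cdots\subseteq W_n=\K(X)$ in which each step attaches the cone $x_i*L_i$ along the subcomplex $L_i:=\mathrm{lk}(x_i)\subseteq\K(Y)\subseteq W_{i-1}$.

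The main tool I would establish is the effect of attaching a cone on a subcomplex. Let $Z$ be connected and let $L\subseteq Z$ be a subcomplex with connected components $L^1,\dots,L^k$; then $\pi_1(Z\cup_L(c*L))\cong(\pi_1(Z)/N)*F_{k-1}$, where $N$ is the normal closure of the images of the $\pi_1(L^s)$ and $F_{k-1}$ is free of rank $k-1$. The delicate point, and the main obstacle, is that the gluing locus $L$ is usually \emph{disconnected}, so the naive connected-intersection form of van Kampen does not apply directly. I would circumvent this by attaching, for each $s$, the cone $c_s*L^s$ on the connected piece $L^s$ separately; these have connected gluing loci, so ordinary van Kampen applies and each kills the image of $\pi_1(L^s)$, producing $\pi_1(Z)/N$. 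Then I identify the $k$ apexes $c_1,\dots,c_k$ to the single cone point $c$; identifying $k$ points lying in one path component is homotopy equivalent to wedging on $k-1$ circles, which contributes precisely the free factor $F_{k-1}$. (Equivalently, one invokes the groupoid version of van Kampen with the disconnected intersection $L$.)

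Finally I would run the induction along the filtration. Because $L_i\subseteq\K(Y)$ and $\K(Y)$ is simply connected, every loop in every component $L_i^s$ is already null-homotopic in $\K(Y)$, hence null-homotopic in $W_{i-1}$; so the image of each $\pi_1(L_i^s)$ in $\pi_1(W_{i-1})$ is trivial, and the subgroup $N_i$ to be killed at step $i$ is trivial. The cone-attachment formula therefore degenerates to $\pi_1(W_i)\cong\pi_1(W_{i-1})*F_{k_i-1}$, where $k_i$ is the number of components of $L_i$. Starting from $\pi_1(W_0)=\pi_1(\K(Y))=1$ and iterating, I conclude $\pi_1(X)=\pi_1(W_n)\cong F_{k_1-1}*\cdots*F_{k_n-1}$, a free group of rank $\sum_i(k_i-1)$. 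A couple of routine points checked along the way — that each $L_i$ is nonempty, so the filtration stays connected (which holds since $X$ is connected and $\K(Y)$ is nonempty and connected), and that all basepoints and connecting paths can be chosen inside $\K(Y)$ — are immediate from the simple connectivity of $Y$.
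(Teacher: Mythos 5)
Your proposal is correct, but it takes a genuinely different route from the paper's proof. The paper argues in three lines: since $|\K(Y)|\subseteq|\K(X)|$ is a cofibration and $|\K(Y)|$ is simply connected, the quotient map induces an isomorphism $\pi_1(|\K(X)|)\cong\pi_1\bigl(|\K(X)|/|\K(Y)|\bigr)$; the anti-chain condition makes the quotient a wedge, over the points $x_i\in X-Y$, of the (unreduced) suspensions of their links; and a wedge of suspensions has free fundamental group. You never form this quotient: instead you filter $\K(X)$ by attaching the closed stars $x_i*\mathrm{lk}(x_i)$ one at a time and run Seifert--van Kampen at each step, dealing with the disconnected gluing loci by coning off each component separately and then identifying apexes. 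Both proofs rest on the identical structural observation (stars of anti-chain points are cones over links lying in $\K(Y)$, and distinct stars meet only inside $\K(Y)$); indeed, collapsing $\K(Y)$ at the end of your filtration recovers exactly the paper's wedge of suspensions, so the arguments are close cousins. What the paper's route buys is brevity, at the cost of invoking two standard facts (quotienting by a simply connected CW subcomplex preserves $\pi_1$, and suspensions have free $\pi_1$); what yours buys is self-containedness -- only van Kampen is used -- plus an explicit rank formula $\sum_i\bigl(|\pi_0(\mathrm{lk}(x_i))|-1\bigr)$ for the resulting free group, at the cost of the inductive bookkeeping and the apex-identification lemma. Your handling of the delicate points (triviality of the killed subgroups via simple connectivity of $\K(Y)$, connectivity of the filtration, pairwise intersections of stars) is sound.
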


\begin{proof}
Since the inclusion $|\K(Y)|\subseteq |\K(X)|$ is a cofibration and $|\K(Y)|$ is simply connected, by van Kampen theorem there is an isomorphism $\pi_1(|\K(X)|) \simeq \pi_1(|\K(X)| / |\K(Y)|)$ induced by the quotient map. Since  $X-Y$ is an anti-chain, the space $|\K(X)| / |\K(Y)|$ has the homotopy type of a wedge of suspensions. Therefore, $\pi_1(X) = \pi_1(|\K(X)|) \simeq \pi_1(|\K(X)| / |\K(Y)|)$ is a free group.
\end{proof}

Recall that a topological space $X$ is $n$-connected if $\pi_i(X)$ is trivial for $i\leq n$. By convention, $X$ is $(-1)$-connected if and only if it is non-empty. A map $f:X\to Y$ between topological spaces is an $n$-equivalence if $f_ *:\pi_i(X) \to \pi_i(Y)$ is an isomorphism for $i< n$ and an epimorphism for $i = n$. A finite CW-complex of dimension $n$ is said to be $n$-spherical if it is $(n-1)$-connected. Equivalently, it has the homotopy type of a bouquet of spheres of dimension $n$ (see \cite[Section 8]{Qui78}).

\begin{proposition}[{\cite[Theorem 2]{Bjo03}}, see also \cite{Bar11b}]\label{propNConnectedFibers} Let $f:X\to Y$ be a map between posets. Assume that $f^{-1}(Y_{\leq a})$ is $n$-connected for all $a\in Y$. Then $f$ is an $(n+1)$-equivalence.
\end{proposition}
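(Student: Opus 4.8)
The plan is to prove this by Quillen's ``Theorem A'' technique, using the non-Hausdorff mapping cylinder of the poset map $f$. Recall that $M(f)$ is the poset on the disjoint union $X\sqcup Y$ retaining the orders of $X$ and $Y$, with $x\le y$ (for $x\in X$, $y\in Y$) whenever $f(x)\le y$ in $Y$. The poset map $r\colon M(f)\to M(f)$ given by $r(x)=f(x)$ and $r(y)=y$ satisfies $r\ge\mathrm{id}$ and retracts $M(f)$ onto its subposet $Y$; hence $Y\hookrightarrow M(f)$ is a homotopy equivalence, and the composite $|\K(X)|\hookrightarrow|\K(M(f))|\simeq|\K(Y)|$ is homotopic to $|\K(f)|$. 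Therefore it suffices to show that the inclusion $X\hookrightarrow M(f)$ is an $(n+1)$-equivalence.

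The key topological input I would isolate as a lemma: if $A$ is an $n$-connected subcomplex of $B$, then the inclusion of $B$ into the complex obtained by coning off $A$ is an $(n+1)$-equivalence. (For $i=1$ this is van Kampen; for $i\ge 2$ one computes $H_*(\mathrm{Cone},B)\cong\tilde H_{*-1}(A)$ and applies the relative Hurewicz theorem.) Now choose a linear extension $y_1,\dots,y_m$ of $Y$ and set $Z_0=X$, $Z_k=X\cup\{y_1,\dots,y_k\}$ as subposets of $M(f)$, so that each $y_k$ is a maximal element of $Z_k$. Adding a maximal element cones off its down-set, that is $\K(Z_k)=\K(Z_{k-1})\cup\bigl(y_k\join\K((Z_k)_{<y_k})\bigr)$, and a direct computation (all elements below $y_k$ already lie in $Z_k$) identifies the relevant link as $(Z_k)_{<y_k}=f^{-1}(Y_{\le y_k})\cup Y_{<y_k}=M(f)_{<y_k}$. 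By the coning lemma, once I know every $M(f)_{<y_k}$ is $n$-connected, each inclusion $Z_{k-1}\hookrightarrow Z_k$ is an $(n+1)$-equivalence, so their composite $X\hookrightarrow M(f)$ is too, finishing the proof.

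It remains to show that $M(f)_{<a}=f^{-1}(Y_{\le a})\cup Y_{<a}$ is $n$-connected for every $a\in Y$; this is the heart of the matter, and I expect it to be the main obstacle. One cannot simply claim that $M(f)_{<a}$ is homotopy equivalent to the fiber $f^{-1}(Y_{\le a})$ (the naive retraction pushing the $X$-part up into $Y$ fails to be order preserving), so I would argue by induction on the height of $a$ in $Y$. When $a$ is minimal, $Y_{<a}=\emptyset$ and $M(f)_{<a}=f^{-1}(Y_{\le a})$, which is $n$-connected by hypothesis. For the inductive step, build $M(f)_{<a}$ from the fiber $f^{-1}(Y_{\le a})$ by attaching the elements of $Y_{<a}$ as maximal elements along a linear extension; the link of such a $y\in Y_{<a}$ is exactly $M(f)_{<y}$, which is $n$-connected by the inductive hypothesis since $y$ has smaller height. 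Applying the coning lemma at each attachment shows that $f^{-1}(Y_{\le a})\hookrightarrow M(f)_{<a}$ is an $(n+1)$-equivalence; combined with the hypothesis that $f^{-1}(Y_{\le a})$ is $n$-connected, this forces $M(f)_{<a}$ to be $n$-connected as well, closing the induction.
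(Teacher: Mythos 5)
Your proof is correct, and it puts the effort exactly where it belongs: the real content is the height induction showing that $M(f)_{<a}=f^{-1}(Y_{\leq a})\cup Y_{<a}$ is $n$-connected, since this down-set is strictly larger than the fiber and admits no order-preserving retraction onto it in general. Note, however, that there is no in-paper proof to compare against: the paper states this proposition as a quotation of Bj\"orner's Theorem 2 in \cite{Bjo03}, with \cite{Bar11b} as a secondary reference, and never proves it. Of the two cited arguments, yours reconstructs the one in the spirit of \cite{Bar11b}: the non-Hausdorff mapping cylinder, the comparison $f=r\circ i_X$ with $i_Y r\geq \mathrm{id}$, and the point-by-point filtration along a linear extension are exactly the finite-space techniques of that reference, whereas Bj\"orner's original proof is deduced instead from a generalized nerve theorem for covers with highly connected intersections. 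The only thin spot is the justification of your coning lemma: for $i\geq 2$ the relative Hurewicz theorem does not apply directly to the pair $(B\cup CA,B)$, because $B$ need not be simply connected. The standard repair uses that $A$ is $1$-connected when $n\geq 1$: van Kampen gives $\pi_1(B)\cong\pi_1(B\cup CA)$, so one may pass to universal covers, where the preimages of $CA$ and $A$ split into disjoint copies and the relative homology is a direct sum of copies of $\tilde H_{*-1}(A)$, and then apply Hurewicz there; alternatively, replace $A$ by a CW model with a single vertex and cells only in dimensions $\geq n+1$, so that coning off $A$ amounts, up to homotopy, to attaching cells of dimension $\geq n+2$. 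With that standard patch your argument is complete.
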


Denote by $X^n$ the subposet of $X$ consisting of elements $a$ such that the complex $\K(X_{\leq a})$ has dimension at most $n$. Note that $\S_p(G)^n = \{P\in \S_p(G) : |P|\leq p^{n+1}\}$ and $\A_p(G)^n = \A_p(G)\cap\S_p(G)^n$. Recall that $\A_p(G)\hookrightarrow \S_p(G)$ is a homotopy equivalence by \cite[Proposition 2.1]{Qui78}.

\begin{proposition}\label{propHeightReduction}
The inclusions $\A_p(G)^n\hookrightarrow\A_p(G)^{n+1}$ and $\S_p(G)^n\hookrightarrow\S_p(G)^{n+1}$ are $n$-equivalences. In particular, the inclusions $\A_p(G)^2\hookrightarrow\A_p(G)$ and  $\S_p(G)^2\hookrightarrow\S_p(G)$ induce isomorphisms between the fundamental groups.
\end{proposition}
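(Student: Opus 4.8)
The plan is to prove the statement first for the elementary abelian posets $\A_p(G)^n$ by means of Bj\"orner's fibre theorem (Proposition \ref{propNConnectedFibers}), and then to transfer it to the posets $\S_p(G)^n$ through a height-truncated version of Quillen's equivalence $\A_p(G)\simeq\S_p(G)$.

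First I would apply Proposition \ref{propNConnectedFibers} to the inclusion $f\colon \A_p(G)^n\hookrightarrow\A_p(G)^{n+1}$. For $a\in\A_p(G)^{n+1}$ the relevant fibre is $f^{-1}\big((\A_p(G)^{n+1})_{\leq a}\big)=(\A_p(G)^n)_{\leq a}=\{B\in\A_p(G): B\leq a,\ |B|\leq p^{n+1}\}$. If $a\in\A_p(G)^n$, then $a$ itself lies in this fibre and is its maximum, so the fibre is contractible. If instead $a\in\A_p(G)^{n+1}-\A_p(G)^n$, then $a$ is elementary abelian of rank $n+2$, the order restriction on $B$ becomes automatic, and the fibre is exactly the poset of proper non-trivial subspaces of $a\cong\mathbb{F}_p^{n+2}$. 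Its order complex is the Tits building of type $A_{n+1}$, which by the Solomon--Tits theorem is homotopy equivalent to a wedge of $n$-spheres and hence is $(n-1)$-connected. Since contractible posets are $(n-1)$-connected as well, every fibre of $f$ is $(n-1)$-connected, so Proposition \ref{propNConnectedFibers} gives that $f$ is an $n$-equivalence.

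Next I would transport this to $\S_p$. For each $n$ the inclusion $v_n\colon\A_p(G)^n\hookrightarrow\S_p(G)^n$ is a homotopy equivalence: for $P\in\S_p(G)^n$ one has $|P|\leq p^{n+1}$, so every elementary abelian subgroup of $P$ already satisfies the order bound and $v_n^{-1}\big((\S_p(G)^n)_{\leq P}\big)=\A_p(P)$, which is contractible because $\Omega_1(Z(P))\neq 1$ is a non-trivial central elementary abelian subgroup (Quillen's argument, cf. \cite{Qui78}). Quillen's fibre lemma then makes $v_n$ a homotopy equivalence. Placing the two inclusions $\A_p(G)^n\hookrightarrow\A_p(G)^{n+1}$ and $\S_p(G)^n\hookrightarrow\S_p(G)^{n+1}$ into the commutative square with vertical maps $v_n$ and $v_{n+1}$, and noting that $v_n,v_{n+1}$ induce isomorphisms on all homotopy groups, the bottom inclusion is an $n$-equivalence precisely because the top one is. This proves both $n$-equivalence assertions, and it is convenient because it spares us from analysing the fibres $\S_p(P)-\{P\}$ for an arbitrary $p$-group $P$ of order $p^{n+2}$, which need not be buildings.

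Finally, the ``in particular'' follows by iteration: since these posets are finite-dimensional, $\A_p(G)^n=\A_p(G)$ and $\S_p(G)^n=\S_p(G)$ once $p^{n+1}$ exceeds the order of a Sylow $p$-subgroup, and composing the $n$-equivalences for $n\geq 2$ (each of which induces an isomorphism on $\pi_1$, as any $m$-equivalence with $m\geq 2$ does) yields the isomorphisms $\pi_1(\A_p(G)^2)\cong\pi_1(\A_p(G))$ and $\pi_1(\S_p(G)^2)\cong\pi_1(\S_p(G))$. The only non-formal ingredient is the identification in the top-dimensional case of the first step: recognising the fibre as the building of $\mathbb{F}_p^{n+2}$ and invoking Solomon--Tits for its $(n-1)$-connectivity. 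Everything else---the two fibre-lemma applications and the homotopy-group bookkeeping---is routine.
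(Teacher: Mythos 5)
Your proof is correct, and its first half coincides with the paper's own argument: both apply Bj\"orner's fibre theorem (Proposition \ref{propNConnectedFibers}) and dispose of the only non-trivial fibre --- the one below a maximal element of order $p^{n+2}$ --- via Solomon--Tits. Where you genuinely diverge is in the treatment of $\S_p(G)$. The paper runs the fibre argument directly on $\S_p(G)^n\hookrightarrow\S_p(G)^{n+1}$, which forces it to handle a third kind of fibre, namely $\S_p(P)-\{P\}$ for $P$ of order $p^{n+2}$ not elementary abelian; it contracts that fibre with the Frattini-subgroup homotopy $Q\leq Q\Phi(P)\geq\Phi(P)$, and then remarks that ``a similar proof works for $\A_p(G)$''. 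You instead prove the $\A_p$ statement first and transfer it to $\S_p$ through the truncated Quillen equivalences $v_n\colon\A_p(G)^n\hookrightarrow\S_p(G)^n$ (all of whose fibres are the contractible posets $\A_p(P)$) together with a two-out-of-three diagram chase. Your route buys the avoidance of the Frattini contraction --- every fibre you meet is either a cone or a building --- at the mild cost of one extra application of the fibre lemma and some homotopy-group bookkeeping; the paper's route is shorter but needs that ad hoc contraction. One further point in your favour: your dimension count is the correct one. The poset of proper non-trivial subspaces of $\mathbb{F}_p^{n+2}$ is a wedge of $n$-spheres (the building of type $A_{n+1}$), hence $(n-1)$-connected, which is exactly the hypothesis Proposition \ref{propNConnectedFibers} requires for an $n$-equivalence; the paper's proof states ``spheres of dimension $(n-1)$'', an off-by-one slip which, read literally, would only yield an $(n-1)$-equivalence.
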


\begin{proof}
We show that the inclusion $i:\S_p(G)^{n}\hookrightarrow\S_p(G)^{n+1}$ is an $n$-equivalence by using the previous proposition. Let $P\in \S_p(G)^{n+1}$. Note that $i^{-1}(\S_p(G)^{n+1}_{\leq P}) \subseteq \S_p(P)$. If $|P|\leq p^{n+1}$, then $P\in \S_p(G)^n$ and $i^{-1}(\S_p(G)^{n+1}_{\leq P}) = \S_p(P)$ is contractible (see Proposition \ref{propOpGContractible} below). In particular, it is $(n-1)$-connected. Suppose $|P| = p^{n+2}$. If $P$ is elementary abelian, then $i^{-1}(\S_p(G)^{n+1}_{\leq P}) = \A_p(P)-P$ is the poset of proper subspaces of $P$, which is a wedge of spheres of dimension $(n-1)$ by the classical Solomon-Tits result. If $P$ is not elementary abelian, $i^{-1}(\S_p(G)^{n+1}_{\leq P}) = \S_p(P)-P$,  and $1\neq \Phi(P) < P$, where $\Phi(P)$ is the Frattini subgroup of $P$. Then, $Q\leq Q\Phi(P)\geq \Phi(P)$ induces a homotopy between the identity map and the constant map inside $\S_p(P)-P$, and therefore $\S_p(P)-P$ is contractible. This shows that  $i:\S_p(G)^{n}\hookrightarrow\S_p(G)^{n+1}$ is an
$n$-equivalence. A similar proof works for $\A_p(G)$.
\end{proof}

\begin{remark}
By Proposition \ref{propHeightReduction}, in order to study the fundamental group of the Quillen complex, we only need to deal with the subposet $\A_p(G)^2$. In fact we will not use the posets $\S_p(G)$ or $\S_p(G)^2$. Note that we could have deduced the isomorphism $i_*:\pi_1(\A_p(G)^2)\to\pi_1(\A_p(G))$ without need of Propositions \ref{propNConnectedFibers} and \ref{propHeightReduction}: it follows from van Kampen theorem and the fact that for any $P\in \A_p(G)-\A_p(G)^2$, $\A_p(G)_{<P}$ is a wedge of spheres of dimension greater than or equal to $2$. We decided to include Proposition \ref{propHeightReduction} for future references. 
\end{remark}





The following result was proved by Quillen \cite[Proposition 2.4]{Qui78}.

\begin{proposition}\label{propOpGContractible}
If $O_p(G)\neq 1$ then $\A_p(G)$ and $\S_p(G)$ are contractible.
\end{proposition}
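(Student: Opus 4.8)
The plan is to contract $\S_p(G)$ by an explicit conical homotopy and then transfer the conclusion to $\A_p(G)$ through the homotopy equivalence between the two posets recalled earlier in the text. First I would set $N = O_p(G) \neq 1$ and observe that, because $N$ is a non-trivial \emph{normal} $p$-subgroup, the assignment $P \mapsto PN$ is well defined on $\S_p(G)$: the product $PN$ is a subgroup (since $N \trianglelefteq G$) and it is a $p$-group because $|PN| = |P|\,|N|/|P\cap N|$ is a power of $p$. This gives an order-preserving map $g\colon \S_p(G) \to \S_p(G)$, and since $N \neq 1$ the constant map $c_N$ at $N$ also lands in $\S_p(G)$.

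Next I would exploit the inequalities $P \leq PN$ and $N \leq PN$, valid for every $P \in \S_p(G)$, which say precisely that $\id_{\S_p(G)} \leq g$ and $c_N \leq g$ in the pointwise order on poset maps. By the homotopy invariance lemma recalled in Section \ref{sectionthree} (comparable poset maps induce homotopic maps on order complexes, see \cite{Qui78}), we obtain $|\K(\id)| \simeq |\K(g)| \simeq |\K(c_N)|$. Hence the identity of $|\K(\S_p(G))|$ is homotopic to a constant map, so $\S_p(G)$ is contractible. Finally, since $\A_p(G)\hookrightarrow \S_p(G)$ is a homotopy equivalence by \cite[Proposition 2.1]{Qui78}, $\A_p(G)$ is contractible as well.

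There is no genuine obstacle in this argument; the only points demanding care are verifying that $PN$ is really a $p$-subgroup (so that $g$ takes values in $\S_p(G)$) and that $g$ is order-preserving, both of which follow at once from the normality of $N$. It is worth noting, however, why I route the argument through $\S_p(G)$ rather than contracting $\A_p(G)$ directly: if $E$ is elementary abelian then $EN$ need not be elementary abelian (nor even abelian), so the clean conical contraction $E \mapsto EN$ fails inside $\A_p(G)$. This is the one place where a naive direct approach breaks down, and passing to $\S_p(G)$ and invoking the equivalence $\A_p(G)\simeq \S_p(G)$ sidesteps it entirely.
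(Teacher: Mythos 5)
Your proof is correct, and it is precisely the argument behind the result the paper cites: the paper gives no proof of its own, deferring to Quillen \cite[Proposition 2.4]{Qui78}, whose proof is exactly your conical contraction $P \leq PN \geq N$ of $\S_p(G)$ followed by the homotopy equivalence $\A_p(G)\hookrightarrow \S_p(G)$. Your closing remark about why the naive contraction $E\mapsto EN$ fails inside $\A_p(G)$ is also the right reason this detour through $\S_p(G)$ is needed.
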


The converse of this proposition is Quillen's conjecture \cite[Conjecture 2.9]{Qui78}.

\begin{remark}\label{remarkInflationSubsapce}
For any subgroup $H\leq G$, consider the subposet $\mathcal{N}(H) = \{T\in \A_p(G) : T\cap H \neq 1\}\subseteq \A_p(G)$. Note that the inclusion $\A_p(H)\subseteq \mathcal{N}(H)$ is a strong deformation retract via $T\in \mathcal{N}(H)\mapsto T\cap H \in \A_p(H)$.

\end{remark}

\begin{lemma}\label{lemmaLinkRetract}
Let $H\leq G$ and let $T\in \A_p(G) - \A_p(H)$. Then $i:\A_p(C_H(T)) \to \N(H)\cap \A_p(G)_{>T}$ defined by $i(A) = AT$ is a strong deformation retract.
\end{lemma}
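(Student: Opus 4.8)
The plan is to build an order-preserving map in the opposite direction and then invoke the basic homotopy lemma for posets, that two comparable order-preserving maps induce homotopic maps on order complexes (recalled above, see \cite{Qui78}). Writing $P=\A_p(C_H(T))$ and $Q=\N(H)\cap\A_p(G)_{>T}$, I would define the candidate retraction $r\colon Q\to P$ by $r(S)=S\cap H$, and aim to prove $r\circ i=\id_P$ together with $i\circ r\le\id_Q$. Throughout, the operative elementary fact is that $T\cap H=1$, which holds in the applications of the lemma, where $H$ is a $p'$-subgroup and $T$ a $p$-subgroup.

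First I would verify that $i$ and $r$ are well-defined poset maps. For $i$: since $A\le C_H(T)$ centralizes $T$, the product $AT$ is again an elementary abelian $p$-group; it satisfies $AT\in\N(H)$ because $1\ne A\le AT\cap H$; and $T<AT$ strictly, since $A\le T$ would give $A=A\cap T\le T\cap H=1$, contradicting $A\ne1$. For $r$: every $S\in\A_p(G)_{>T}$ is abelian and contains $T$, hence $S\le C_G(T)$ and so $S\cap H\le C_G(T)\cap H=C_H(T)$; moreover $S\cap H\ne 1$ as $S\in\N(H)$, so $r(S)\in P$. Monotonicity of both maps is immediate, since $(\cdot)T$ and $(\cdot)\cap H$ preserve inclusions.

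Next I would compute the two composites. The key identity is $AT\cap H=A$: if $at\in AT$ with $a\in A\le H$ lies in $H$, then $t\in T\cap H=1$, so $at=a\in A$; hence $r(i(A))=A$ and $r\circ i=\id_P$, which makes $i$ a poset embedding onto its image. Conversely, for $S\in Q$ both $S\cap H$ and $T$ lie in $S$, so $i(r(S))=(S\cap H)T\le S$, that is, $i\circ r\le\id_Q$. By the comparison lemma $|\K(i\circ r)|$ is homotopic to $|\K(\id_Q)|$, and with $r\circ i=\id_P$ this shows that $|\K(i)|$ and $|\K(r)|$ are mutually inverse homotopy equivalences. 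Since the homotopy from $\id_Q$ to $i\circ r$ is stationary on $i(P)$ (where $i\circ r$ restricts to the identity, because $r\circ i=\id_P$), the map $i$ realizes $P$ as a strong deformation retract of $Q$, as claimed.

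The step I expect to be the main obstacle is the well-definedness of $i$, namely the strict containment $T<AT$ and the identity $AT\cap H=A$ underlying $r\circ i=\id_P$; both rest on the triviality of $T\cap H$. This is precisely the point at which the hypothesis $T\in\A_p(G)-\A_p(H)$ (sharpened to $T\cap H=1$ in the intended applications, automatic when $H$ is a $p'$-subgroup) is used. Once these elementary group-theoretic facts are secured, the homotopy-theoretic conclusion follows formally from the comparison lemma.
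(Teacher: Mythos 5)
Your proposal is correct and follows essentially the same route as the paper's proof: the same retraction $r(S)=S\cap H$, the same composite identities $r\circ i=\id$ and $i\circ r\leq \id$ (the paper gets $AT\cap H=A$ from the modular law, you get it by direct computation), and the same comparability lemma to conclude. Your explicit observation that one really needs $T\cap H=1$ --- the stated hypothesis $T\notin\A_p(H)$ alone would not even make $i$ well defined, since $i(T\cap H)=T$ fails to lie in $\A_p(G)_{>T}$ --- is a point the paper leaves implicit in its appeal to the modular law, and this stronger condition does hold in every application of the lemma, where $|T|=p$ and $T\nleq H$ (rather than because $H$ is a $p'$-group, as you suggest).
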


\begin{proof}
The result is clear if $\A_p(C_H(T))$ is empty. If it is not empty, let $r:\N(H)\cap \A_p(G)_{>T}\to \A_p(C_H(T))$ be the map $r(A) = A\cap H$. Then $ri(A) = A$ by modular law, and $ir(A)\leq A$.
\end{proof}







We will use the following lemma of \cite{Asc93}.

\begin{lemma}[{\cite[(6.9)]{Asc93}}]\label{lemmaConnectedLinks}
Let $N\normal G$ and suppose $\A_p(N)$ is simply connected. If $\A_p(C_N(T))$ is connected for each subgroup $T\leq G$ of order $p$, then $\A_p(G)$ is simply connected.
\end{lemma}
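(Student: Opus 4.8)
The plan is to push every loop of $\A_p(G)$ into the simply connected subposet $\N(N)$. We may assume $\A_p(G)$ is connected. Consider $\N(N) = \{A \in \A_p(G) : A \cap N \neq 1\}$. By Remark~\ref{remarkInflationSubsapce} the retraction $A \mapsto A \cap N$ exhibits $\A_p(N)$ as a strong deformation retract of $\N(N)$, so $\N(N)$ is simply connected. Hence it suffices to prove that the inclusion $\N(N) \hookrightarrow \A_p(G)$ induces an epimorphism $\pi_1(\N(N)) \to \pi_1(\A_p(G))$: since $\pi_1(\N(N)) = 1$, this forces $\pi_1(\A_p(G)) = 1$.

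To establish this surjectivity I would build $\A_p(G)$ from $\N(N)$ by adjoining, one at a time and in order of increasing rank, the elements of $Z = \{A \in \A_p(G) : A \cap N = 1\}$. When a vertex $v$ is adjoined to the current subposet $W$, one has $\K(W \cup \{v\}) = \K(W) \cup \overline{\mathrm{st}}(v)$, where $\overline{\mathrm{st}}(v)$ is a cone (hence contractible) and $\K(W) \cap \overline{\mathrm{st}}(v)$ is the link $\K(\A_p(G)_{<v}) * \K(\N(N) \cap \A_p(G)_{>v})$; by van Kampen, if this link is connected then $\pi_1(W) \to \pi_1(W \cup \{v\})$ is surjective. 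Since $v \cap N = 1$ gives $v \not\leq N$, Lemma~\ref{lemmaLinkRetract} identifies the upper factor as $\N(N) \cap \A_p(G)_{>v} \simeq \A_p(C_N(v))$. This settles two ranges at once. If $v$ has rank $1$, then $\A_p(G)_{<v} = \emptyset$ and the link is $\simeq \A_p(C_N(v))$, which is connected by hypothesis. If $v$ has rank $\geq 3$, then $\A_p(G)_{<v}$ is the poset of proper nontrivial subspaces of $v$, which is connected by Solomon--Tits, so the join is connected irrespective of the upper factor. By Proposition~\ref{propHeightReduction} one may even restrict to $\A_p(G)^2$, so that only ranks $1,2,3$ occur.

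The main obstacle is the rank-$2$ elements $v \in Z$ whose centralizer $C_N(v)$ is a $p'$-group. For such $v$ the upper factor $\A_p(C_N(v))$ is empty while $\A_p(G)_{<v}$ is the antichain of the $p+1$ rank-$1$ subgroups $t_0, \dots, t_p$ of $v$; thus the link is disconnected and adjoining $v$ may a priori create new loops $t_i \to v \to t_j \to (\text{path in } W) \to t_i$. I expect the crux of the proof to be showing that each such loop is already null-homotopic in $\A_p(G)$. The leverage is that every $t_i$ lies in $Z$ and has rank $1$, so $\A_p(C_N(t_i))$ is connected and nonempty; for $B \in \A_p(C_N(t_i))$ the product $t_i B$ is elementary abelian with $t_i B \cap N \supseteq B \neq 1$, hence $t_i B \in \N(N)$, giving connected neighbourhoods $\N(N) \cap \A_p(G)_{>t_i} \simeq \A_p(C_N(t_i))$ above each $t_i$. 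Filling the offending loops then reduces to a careful comparison inside these neighbourhoods, together with the simple connectivity of $\A_p(N)$, to produce the requisite $2$-cells from suitable rank-$3$ overgroups. This delicate combinatorial step is the heart of Aschbacher's original argument for \cite[(6.9)]{Asc93}, and it is the part I would expect to demand the most work.
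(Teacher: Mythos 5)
First, note that the paper does not prove this lemma at all: it is imported verbatim from Aschbacher \cite[(6.9)]{Asc93}, so there is no internal proof to compare against and your proposal has to stand on its own. Its scaffolding is sound: $\N(N)$ deformation retracts to $\A_p(N)$ (Remark \ref{remarkInflationSubsapce}), adjoining the vertices of $Z=\{A:A\cap N=1\}$ in order of increasing rank produces links of the form $\K(\A_p(G)_{<v})*\K(\N(N)\cap\A_p(G)_{>v})$, Lemma \ref{lemmaLinkRetract} identifies the upper factor with $\A_p(C_N(v))$ (legitimate, since $v\cap N=1$ forces $v\notin\A_p(N)$), and the rank-$1$ and rank-$\geq 3$ adjunctions are indeed $\pi_1$-surjective by van Kampen.

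The gap is exactly where you locate it, but it is worse than ``may a priori create new loops.'' For a rank-$2$ vertex $v\in Z$ with $C_N(v)$ a $p'$-group, the link is the disjoint union of the $p+1$ points $t_0,\dots,t_p$, and van Kampen with disconnected intersection and contractible star gives $\pi_1(W\cup\{v\})\simeq \pi_1(W)*F_p$, a free product with a free group of rank $p$: surjectivity \emph{definitely fails} at that step, it is not merely unverified. Consequently your reduction ``it suffices to show each adjunction is $\pi_1$-epi'' cannot be carried through with this filtration; what must actually be proved is that the free generators born at such $v$ (loops of the form $t_i<v>t_j$ closed up by a path in $W$) are killed when the rank-$3$ vertices of $Z$ are attached afterwards. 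Producing those rank-$3$ overgroups and the bounding $2$-cells from the hypotheses (connectivity of the $\A_p(C_N(t_i))$'s plus simple connectivity of $\A_p(N)$) is precisely the content of Aschbacher's argument, and you explicitly defer it (``I expect the crux\dots'', ``the part I would expect to demand the most work''). Since essentially all of the difficulty of \cite[(6.9)]{Asc93} is concentrated in that step, what you have is a correct reduction of the problem together with an acknowledgement that its heart is missing; it is not a proof.
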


If $X$ and $Y$ are posets, their join is the poset $X * Y$ whose underlying set is the disjoint union $X\coprod Y$ and with the following order relation $\leq$. We keep the given order in $X$ and $Y$, and set $x\leq y$ for all $x\in X$ and $y\in Y$. It is easy to see that $\K(X*Y) = \K(X) * \K(Y)$ and therefore  $|\K(X * Y)|= |\K(X)| * |\K(Y)|$ (see \cite[Proposition 1.9]{Qui78}).

\begin{proposition}[{\cite[Lemma 6.2.4]{Bar11}}]\label{propFreePi1Join} 
If $X$ and $Y$ are finite non-empty posets, then $\pi_1(X*Y)$ is a free group of rank $(|\pi_0(X)|-1).(|\pi_0(Y)|-1)$.
\end{proposition}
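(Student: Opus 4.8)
The plan is to reduce the statement to the case in which both factors are anti-chains, where $\K(X*Y)$ is a complete bipartite graph and the rank is read off from its Euler characteristic. The reduction is carried out by replacing each factor by its discrete poset of connected components, one factor at a time, using Björner's fiber criterion (Proposition \ref{propNConnectedFibers}) to ensure that each replacement induces an isomorphism on $\pi_1$.

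First I would settle the discrete case. Write $\pi_0(X)=\{1,\dots,a\}$ and $\pi_0(Y)=\{1',\dots,b'\}$ for the anti-chains on the connected components of $X$ and $Y$. In the join $\pi_0(X)*\pi_0(Y)$ every $i$ lies below every $j'$ and there are no other relations, so a chain has at most two elements; hence $\K(\pi_0(X)*\pi_0(Y))$ is the complete bipartite graph $K_{a,b}$, with $a+b$ vertices and $ab$ edges. It is connected (as $a,b\geq 1$), so its fundamental group is free of rank $ab-(a+b)+1=(a-1)(b-1)$, exactly as claimed.

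Next comes the reduction lemma: for any non-empty posets $X$ and $Y$, the component-collapsing map $g\colon X*Y\to X*\pi_0(Y)$, given by the identity on $X$ and $y\mapsto [y]$ on $Y$, induces an isomorphism on $\pi_1$. This $g$ is a poset map, and I would analyze its fibers over down-sets of the target $Q=X*\pi_0(Y)$. For $x\in X$ one has $Q_{\leq x}=X_{\leq x}$, so $g^{-1}(Q_{\leq x})=X_{\leq x}$ has a maximum and is contractible; for $j'\in\pi_0(Y)$ one has $Q_{\leq j'}=X\cup\{j'\}$, so $g^{-1}(Q_{\leq j'})=X*Y_j$, the join of the non-empty poset $X$ with the connected component $Y_j$. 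By Milnor's join connectivity theorem this join is $1$-connected. Thus every fiber is $1$-connected, and Proposition \ref{propNConnectedFibers} shows $g$ is a $2$-equivalence, in particular a $\pi_1$-isomorphism.

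Applying the lemma once gives $\pi_1(X*Y)\cong\pi_1(X*\pi_0(Y))$. To collapse the remaining factor I would pass to opposite posets: since $\K(Z)=\K(Z^{\mathrm{op}})$ and $(X*\pi_0(Y))^{\mathrm{op}}=\pi_0(Y)*X^{\mathrm{op}}$, the lemma applied to the top factor $X^{\mathrm{op}}$ (whose components are those of $X$) yields $\pi_1(X*\pi_0(Y))\cong\pi_1(\pi_0(Y)*\pi_0(X))=\pi_1(\pi_0(X)*\pi_0(Y))$. Combined with the discrete computation this proves the proposition. The main subtlety, and the reason for the opposite-poset maneuver, is that the fiber estimate is asymmetric: collapsing the top factor produces contractible cone-fibers $X_{\leq x}$ over the bottom elements, whereas collapsing the bottom factor directly would leave fibers that are merely single components $X_i$ — only $0$-connected — yielding just a $1$-equivalence and hence only surjectivity on $\pi_1$. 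Non-emptiness of both factors is used precisely to make $X*Y_j$ (and its mirror image) simply connected.
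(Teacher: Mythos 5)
Your proof is correct, and there is an important point of comparison to flag: the paper does not actually prove this proposition — it is quoted directly from Barmak's book (\cite[Lemma 6.2.4]{Bar11}) — so your write-up is necessarily a different route, and in fact a self-contained one built from tools the paper already has. Every step checks out. The collapse $g\colon X*Y\to X*\pi_0(Y)$ has fibers $X_{\leq x}$ (cones, hence contractible) over elements of $X$, and fibers $X*Y_j$ over components of $Y$; the simple connectivity of the latter — the join of a non-empty complex with a connected one — is exactly the join-connectivity fact the paper itself invokes in the proofs of Lemma \ref{lemmaJoinApgLinks} and Proposition \ref{propReductionAschbacherConjecture}, so Proposition \ref{propNConnectedFibers} applies with $n=1$ and yields an isomorphism on $\pi_1$. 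Your diagnosis of the asymmetry is also right and is the one genuinely delicate point: collapsing the bottom factor directly gives fibers $X_i$ that are only $0$-connected, hence only a $1$-equivalence, and the passage to opposite posets (legitimate since $\K(Z)=\K(Z^{\mathrm{op}})$ and $\pi_0(X^{\mathrm{op}})=\pi_0(X)$) is precisely what lets you re-use the same lemma on the other factor. The Euler-characteristic count for $K_{a,b}$ is elementary and correct. For comparison, the classical argument for this fact works directly on the topological join $|\K(X)|*|\K(Y)|$, covering it by the two open cone neighborhoods of the factors and applying the van Kampen theorem in its non-connected (groupoid) form; that route is shorter but imports machinery the paper never states, whereas yours stays entirely inside the poset fiber-theorem toolkit set up in Section \ref{sectionthree}, at the modest cost of the two-step collapse and the opposite-poset maneuver. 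Either way the proposition stands; your version has the advantage that it could be inserted into the paper as an actual proof rather than a citation.
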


\begin{proposition}[{\cite[Proposition 2.6]{Qui78}}]\label{propQuillenJoin}
If $G = G_1\times G_2$, then $\A_p(G) \simeq \A_p(G_1) * \A_p(G_2)$. In particular, if $p\mid \gcd{(|G_1|,|G_2|)}$, $\A_p(G)$ is connected and $\pi_1(\A_p(G))$ is free. Moreover, $\A_p(G)$ is simply connected if and only if $\A_p(G_i)$ is connected for some $i=1,2$.
\end{proposition}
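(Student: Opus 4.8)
The final statement is Proposition \ref{propQuillenJoin} (Quillen's join proposition). Let me sketch how I would prove it.

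\medskip

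The plan is to prove the homotopy equivalence $\A_p(G_1 \times G_2) \simeq \A_p(G_1) * \A_p(G_2)$ and then read off the consequences about connectivity and the fundamental group. First I would set up the natural comparison. There is an obvious poset map $\mu : \A_p(G_1) * \A_p(G_2) \to \A_p(G_1 \times G_2)$, but it is more convenient to work in the reverse direction and use a fibration-style argument via Proposition \ref{propNConnectedFibers}, or alternatively to use the closure/join structure directly. The cleanest route: define projections $\pi_i : G_1 \times G_2 \to G_i$ and consider the map $f : \A_p(G_1 \times G_2) \to \A_p(G_1) * \A_p(G_2)$ assembled from the images $\pi_1(A)$ and $\pi_2(A)$. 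Since $A$ is elementary abelian and $G_1 \times G_2$ has commuting factors, $A \leq \pi_1(A) \times \pi_2(A)$, and one of $\pi_1(A), \pi_2(A)$ is nontrivial (possibly both). The key observation is that for any pair $(B_1, B_2)$ with $B_i \in \A_p(G_i) \cup \{1\}$ not both trivial, the fiber over the corresponding cone is of the form $\A_p(B_1 \times B_2)$ with $O_p$ nontrivial, hence contractible by Proposition \ref{propOpGContractible}. I would verify that the down-sets pull back to contractible (in particular $n$-connected for all $n$) subposets, so Proposition \ref{propNConnectedFibers} gives that $f$ is a weak equivalence, yielding the join decomposition.

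\medskip

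Next I would extract the three stated consequences from the homotopy equivalence and from Proposition \ref{propFreePi1Join}. For the connectivity claim when $p \mid \gcd(|G_1|,|G_2|)$: in this case both $\A_p(G_1)$ and $\A_p(G_2)$ are non-empty, so $|\pi_0(\A_p(G_i))| \geq 1$, and the join of two non-empty spaces is always connected (indeed, $\pi_0(X * Y)$ is a single point whenever both $X, Y$ are non-empty). For the freeness of $\pi_1$, I would invoke Proposition \ref{propFreePi1Join}, which states precisely that $\pi_1(X * Y)$ is free of rank $(|\pi_0(X)| - 1)(|\pi_0(Y)| - 1)$; a free group of any rank is free, so $\pi_1(\A_p(G))$ is free whenever the join decomposition applies and both factors are non-empty. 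For the simple-connectivity criterion, I would use the standard fact about joins: $X * Y$ is simply connected if and only if $X * Y$ is connected and the free rank above vanishes, i.e. $(|\pi_0(X)| - 1)(|\pi_0(Y)| - 1) = 0$, which happens exactly when $|\pi_0(X)| = 1$ or $|\pi_0(Y)| = 1$, i.e. when one of $\A_p(G_i)$ is connected. (Here one must also check that the join is genuinely non-simply-connected when both factors are disconnected, which again follows from Proposition \ref{propFreePi1Join} giving a nontrivial free group.)

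\medskip

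The main obstacle I expect is establishing the contractibility of the fibers cleanly, i.e. identifying $f^{-1}\bigl((\A_p(G_1) * \A_p(G_2))_{\leq (B_1,B_2)}\bigr)$ and showing it is contractible. The subtlety is that an elementary abelian $p$-subgroup $A \leq G_1 \times G_2$ need not split as $A_1 \times A_2$; one only has $A \leq \pi_1(A) \times \pi_2(A)$, so the map $f$ must be defined with care and the fiber analysis must account for all $A$ projecting into a fixed pair. The resolution is that each such fiber is contained in $\A_p(\pi_1^{-1}(B_1) \cap \pi_2^{-1}(B_2))$-type sets admitting a normal nontrivial $p$-subgroup (namely $B_1 \times 1$ or $1 \times B_2$ or $B_1 \times B_2$), so the conical contraction of Proposition \ref{propOpGContractible}, i.e. the homotopy $C \mapsto C(B_1 \times B_2) \geq B_1 \times B_2$, collapses the fiber to a point. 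Verifying that this assignment lands inside the correct fiber (the modular-law manipulation, in the spirit of Lemma \ref{lemmaLinkRetract}) is the one place requiring genuine attention; everything else is formal.
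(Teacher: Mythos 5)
Your argument is correct in substance, but note that the paper does not prove this proposition at all: it is quoted from \cite[Proposition 2.6]{Qui78}, with the freeness of $\pi_1$ of a join coming from Proposition \ref{propFreePi1Join}, exactly as you use it. Quillen's own proof is shorter than yours: the map $A\mapsto \rho_1(A)\times\rho_2(A)$ (where $\rho_i$ denotes projection to $G_i$) is monotone, idempotent and $\geq$ the identity, hence deformation-retracts $\A_p(G_1\times G_2)$ onto the subposet of subgroups of the form $A_1\times A_2$, which is then identified with the join. Your route through the fiber theorem (Proposition \ref{propNConnectedFibers}) with contractible fibers is a legitimate alternative that uses only tools already stated in the paper, and your extraction of the three consequences from Proposition \ref{propFreePi1Join} is correct, including reading the ``moreover'' clause under the hypothesis that both $\A_p(G_i)$ are non-empty.

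There is, however, one genuine imprecision you must repair. With the paper's (and Quillen's) definition, the join $\A_p(G_1)*\A_p(G_2)$ is the disjoint union of the two posets with every element of $\A_p(G_1)$ placed below every element of $\A_p(G_2)$; its elements are single subgroups, not pairs. So your map $f(A)=(\rho_1(A),\rho_2(A))$ does not take values in the join but in the poset $P$ of pairs $(B_1,B_2)$ with $B_i\in\A_p(G_i)\cup\{1\}$, not both trivial, ordered componentwise. Your fiber computation is valid for $P$ (the fiber over $(B_1,B_2)$ is $\A_p(B_1\times B_2)$, a cone with apex $B_1\times B_2$), but it only yields $\A_p(G)\simeq P$, and $P$ is not isomorphic to the join poset; the identification $|\K(P)|\cong|\K(\A_p(G_1))|*|\K(\A_p(G_2))|$ is a standard but non-trivial extra step (a Walker-type homeomorphism) that you never supply. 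The cheapest repair inside your own framework: define $f$ into the genuine join by $f(A)=\rho_2(A)$ if $\rho_2(A)\neq 1$, and $f(A)=\rho_1(A)$ otherwise. This is a poset map; the fiber over $B_1\in\A_p(G_1)$ is $\A_p(B_1)$ (a cone), and the fiber over $B_2\in\A_p(G_2)$ is $\A_p(G_1\times B_2)$, which is contractible by Proposition \ref{propOpGContractible} because $1\times B_2$ is a non-trivial normal $p$-subgroup of $G_1\times B_2$. With that change, Proposition \ref{propNConnectedFibers} applies verbatim and your proof is complete.
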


Recall that $\A_p(G)$ is disconnected if and only if $G$ has a strongly $p$-embedded subgroup (see \cite[Proposition 5.2]{Qui78}). The groups with this property are classified and we will use this classification later.

\begin{theorem}[{\cite[(6.1)]{Asc93}}]\label{disconnectedCasesTheorem}
The poset $\A_p(G)$ is disconnected if and only if either $O_p(G) = 1$ and $m_p(G) = 1$, or $\Omega_1(G) / O_{p'}(\Omega_1(G))$ is one of the following groups:
\begin{enumerate}
\item Simple of Lie type of Lie rank $1$ and characteristic $p$,
\item $A_{2p}$ with $p \geq 5$,
\item ${}^2G_2(3)$, $L_3(4)$ or $M_{11}$ with $p = 3$,
\item $\Aut(Sz(32))$, ${}^2F_4(2)'$, $Mc$, or $Fi_{22}$ with $p = 5$,
\item $J_4$ with $p = 11$.
\end{enumerate}
\end{theorem}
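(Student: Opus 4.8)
The plan is to translate the combinatorial assertion into a purely group-theoretic one and then appeal to the classification of the relevant simple groups. The starting point is Quillen's criterion, recalled just above the statement: $\A_p(G)$ is disconnected if and only if $G$ possesses a strongly $p$-embedded subgroup (see \cite[Proposition 5.2]{Qui78}). So the task becomes to determine exactly which groups admit such a subgroup. My first reduction would be to pass from $G$ to $S_G = \Omega_1(G)/O_{p'}(\Omega_1(G))$. Since every elementary abelian $p$-subgroup is generated by elements of order $p$, we have $\A_p(G) = \A_p(\Omega_1(G))$; and because $O_{p'}(\Omega_1(G))$ is a normal $p'$-subgroup, the quotient map induces the surjection $\A_p(G)\to\A_p(S_G)$ mentioned in the introduction, which is known to preserve both connectivity and the $p$-rank. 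Thus $\A_p(G)$ is connected if and only if $\A_p(S_G)$ is, and it suffices to decide when $S_G$ (a group with $O_{p'}(S_G)=1$, generated by its elements of order $p$) has a strongly $p$-embedded subgroup.

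Next I would split according to the $p$-rank. If $m_p(G)=1$, then every non-trivial elementary abelian $p$-subgroup has order $p$, so $\A_p(G)$ is an anti-chain; hence it is disconnected precisely when $G$ contains at least two subgroups of order $p$. If $G$ had a unique such subgroup, that subgroup would be characteristic and therefore normal, forcing $O_p(G)\neq 1$; conversely, if $O_p(G)\neq 1$ then $\A_p(G)$ is contractible by Proposition \ref{propOpGContractible}, in particular connected. Therefore, in the rank-one situation, $\A_p(G)$ is disconnected exactly when $O_p(G)=1$, which is the first alternative in the statement (note that this also subsumes the rank-one members of item (1), so the two alternatives are allowed to overlap).

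It remains to treat $m_p(G)=m_p(S_G)\geq 2$. Here I would invoke the classification of finite groups with a strongly $p$-embedded subgroup of $p$-rank at least two, which rests on the classification of finite simple groups. Analysing $S_G$ through its generalized Fitting subgroup $F^*(S_G)$ and its components, the generic examples are the simple groups of Lie type of Lie rank $1$ in characteristic $p$ (whose Borel subgroup is strongly $p$-embedded), accounting for item (1); a case-by-case inspection of the remaining possibilities for the component structure then produces the short list of alternating, sporadic and exceptional groups in items (2)--(5), each verified against its known local $p$-structure. Matching these outputs with the list in the statement completes the argument.

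The main obstacle is precisely this last step: the classification of groups with a strongly $p$-embedded subgroup in $p$-rank $\geq 2$ is a deep, CFSG-dependent result, and isolating exactly the exceptional entries of items (2)--(5) requires a delicate family-by-family analysis of the simple groups together with ad hoc (often computational) verification for the small sporadic and exceptional cases. By contrast, the reduction to $S_G$ and the rank-one dichotomy are routine.
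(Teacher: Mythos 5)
Your proposal should first be measured against the fact that the paper offers no proof of this statement at all: it is imported verbatim from \cite[(6.1)]{Asc93}, where it rests on Quillen's criterion together with the CFSG-based determination of the groups possessing a strongly $p$-embedded subgroup. Your outline has the same overall shape as that argument, and quoting the classification in the $p$-rank $\geq 2$ case is legitimate --- the paper itself quotes strictly more, namely the whole theorem.

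The genuine problem is your first reduction. The assertion that the surjection $\A_p(G)\to\A_p(S_G)$ preserves connectivity \emph{in both directions}, i.e.\ that $\A_p(G)$ is connected if and only if $\A_p(S_G)$ is, is false: surjectivity gives only the forward implication, which is all the paper claims in its introduction. For a counterexample, take any Frobenius group $G=N\rtimes C_p$ with $N$ a nontrivial $p'$-group, say $G=C_7\rtimes C_3$ and $p=3$: then $\Omega_1(G)=G$, $O_{3'}(\Omega_1(G))=C_7$ and $S_G=C_3$, so $\A_3(S_G)$ is a single point, while $\A_3(G)$ consists of seven pairwise incomparable subgroups of order $3$ and is disconnected. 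This failure in $p$-rank $1$ is exactly why the theorem keeps the alternative ``$O_p(G)=1$ and $m_p(G)=1$'' separate instead of encoding everything in $S_G$. Your rank-one analysis (done directly on $G$) is correct, so the fix is structural: split by $p$-rank \emph{before} reducing, and prove the equivalence with $S_G$ only when $m_p(G)\geq 2$. Even there it is not a formality: one needs Schur--Zassenhaus to lift elementary abelian subgroups of $S_G$, the observation that every component of $\A_p(G)$ contains $\A_p(P)$ for a full Sylow $p$-subgroup $P$ (hence an element of rank $\geq 2$), and the coprime generation theorem $N=\gen{C_N(b): 1\neq b\in B}$ for noncyclic elementary abelian $B$ acting on a $p'$-group $N$; together these show that conjugation by $O_{p'}(\Omega_1(G))$ acts trivially on $\pi_0(\A_p(G))$ and hence that $\pi_0(\A_p(G))\to\pi_0(\A_p(S_G))$ is a bijection. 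None of this is covered by the phrase ``known to preserve connectivity''. A last, smaller point: the classification you invoke must be stated for almost simple targets, not just simple ones, since the list contains non-simple entries such as ${}^2G_2(3)$ and $\Aut(Sz(32))$.
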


\begin{remark}\label{remarkLieRank1}
The simple groups of Lie type and Lie rank $1$ are the groups $L_2(q)$, $U_3(q)$, $Sz(q)$ and ${}^2G_2(q)$. In characteristic $2$, these are $L_2(2^n)$, $U_3(2^n)$ and $Sz(2^n)$ and they are the unique simple groups with a strongly $2$-embedded subgroup. There are no simple groups of $2$-rank $1$ (see \cite{GLS98}).
\end{remark}


\begin{remark}\label{remarkConnectedComponents}
Suppose $\A_p(G)$ is disconnected and let $C$ be a connected component. Let $M\leq G$ be the stabilizer of $C$ under the conjugation of $G$ on the connected components of $\A_p(G)$. It can be shown that $C = \A_p(M)$ and that $M$ is a strongly $p$-embedded subgroup of $G$ (see for example \cite[Section 46]{AscFGT} and \cite[Section 5]{Qui78}). Moreover, since $G$ permutes transitively the connected components of $\A_p(G)$, they have isomorphic fundamental groups. This allows us to define $\pi_1(\A_p(G))$ as  $\pi_1(\A_p(M))$ (for any connected component $C$). Therefore the study of the fundamental group of the Quillen complexes can be restricted to the connected case.
\end{remark}

\section{Reduction to $O_{p'}(G) = 1 = O_p(G)$, $G = \Omega_1(G)$} \label{sectionfour}

In this section, we reduce the study of the fundamental group of $\A_p(G)$ to the case $O_{p'}(G) = 1 =O_p(G)$ and $G = \Omega_1(G)$. We assume that $\A_p(G)$ is connected.

The reduction $G = \Omega_1(G)$ is clear since $\A_p(G) = \A_p(\Omega_1(G))$. If $O_p(G) \neq 1$, then $\A_p(G)$ is contractible by Proposition \ref{propOpGContractible} and in particular simply connected. Therefore, we may assume $O_p(G) = 1$.

The reduction $O_{p'}(G) = 1$ relies on the wedge lemma of homotopy colimits. We will use Pulkus-Welker's result \cite[Theorem 1.1]{PW00} but for $\A_p(G)^2$ instead of $\A_p(G)$. Recall that $\pi_1(\A_p(G)) = \pi_1(\A_p(G)^2)$ by Proposition \ref{propHeightReduction}. 

Note that $m_p(G) = m_p(G/O_{p'}(G))$ 
and that $\A_p(G/O_{p'}(G))$ is connected when $\A_p(G)$ is connected since the induced map $\A_p(G) \to \A_p(G/O_{p'}(G))$ is surjective. The following lemma is a slight variation of Pulkus-Welker's result \cite[Theorem 1.1]{PW00}. Note that we have replaced the hypothesis of solvability of the normal $p'$-subgroup $N\leq G$  in \cite[Theorem 1.1]{PW00} by simple connectivity of $\A_p(AN)$ for $A$ of $p$-rank $3$.

\begin{lemma}\label{lemmaPW}
Let $N$ be a normal $p'$-subgroup of $G$ such that $\A_p(AN)$ is simply connected for each elementary abelian $p$-subgroup $A\leq G$ of $p$-rank $3$. Then
\[\A_p(G)^2\simeq \A_p(G/N)^2\bigvee_{\overline{B}\in \A_p(G/N)^2}\A_p(BN) * \A_p(G/N)^2_{>\overline{B}}\]
In particular, for a suitable base point,
\[\pi_1(\A_p(G)) = \pi_1(\A_p(G/N)) *_{\overline{B}\in \A_p(G/N)^2} \pi_1(\A_p(BN) * \A_p(G/N)^2_{>\overline{B}}).\]
\end{lemma}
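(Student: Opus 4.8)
The plan is to derive this from Pulkus--Welker's wedge decomposition \cite[Theorem 1.1]{PW00}, which gives precisely the asserted wedge formula
\[\A_p(G)\simeq \A_p(G/N)\bigvee_{\overline{B}\in \A_p(G/N)}\A_p(BN) * \A_p(G/N)_{>\overline{B}}\]
under the hypothesis that $N$ is a \emph{solvable} normal $p'$-subgroup. The point of this lemma is to replace solvability by the weaker hypothesis that $\A_p(AN)$ is simply connected for every elementary abelian $A\leq G$ of $p$-rank $3$. So first I would recall exactly where solvability enters the proof of \cite[Theorem 1.1]{PW00}. In the homotopy colimit / wedge lemma argument, one considers the map $q:\A_p(G)\to\A_p(G/N)$ induced by the quotient, and the decomposition is obtained by analyzing the fibers $q^{-1}(\A_p(G/N)_{\leq\overline{B}})$ over the order complex of the target. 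The homotopy type of these fibers is controlled by $\A_p(BN)$, where $B$ is a chosen $p$-subgroup mapping onto $\overline B$; solvability of $N$ is used only to identify these fibers (via Quillen's results on $\A_p(LA)$ for $L$ a solvable $p'$-group, \cite[Theorem 11.2]{Qui78}) and to ensure the wedge lemma applies.

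The key observation is that for the \emph{fundamental group} statement, and working with the truncated poset $\A_p(G)^2$, the full homotopy type of the fibers is not needed: only their simple connectivity (equivalently, $1$-connectivity) matters for van Kampen and the wedge lemma to yield the stated $\pi_1$ decomposition. Thus the first real step is to show that passing to $\A_p(G)^2$ is harmless: by Proposition \ref{propHeightReduction} the inclusion $\A_p(G)^2\hookrightarrow\A_p(G)$ induces an isomorphism on $\pi_1$, and the same holds for $G/N$ and for each $BN$. Hence it suffices to establish the wedge decomposition at the level of $\A_p(\cdot)^2$, which only sees chains of length at most $3$, i.e.\ elementary abelian subgroups up to rank $3$. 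This is exactly why the hypothesis is phrased in terms of $p$-rank $3$ subgroups $A$.

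The central step is then to verify that the relevant fibers are simply connected under the new hypothesis. Concretely, for $\overline B\in\A_p(G/N)^2$ one needs the piece $\A_p(BN)*\A_p(G/N)^2_{>\overline B}$ to behave correctly; the fiber over $\overline B$ is governed by $\A_p(BN)$, and the hypothesis ``$\A_p(AN)$ simply connected for $A$ of $p$-rank $3$'' is tailored to guarantee that the $2$-dimensional pieces of these fibers are simply connected, so that the wedge lemma computes $\pi_1$ as the free product displayed. Here I would use Lemma \ref{lemmaLinkRetract} and Remark \ref{remarkInflationSubsapce} to identify $q^{-1}$ of a lower-interval with the relevant inflated subposet and to relate it to $\A_p(BN)$, and then apply van Kampen to the cofibration coming from the wedge. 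The homotopy-equivalence statement then follows from the wedge lemma of homotopy colimits (Björner's Proposition \ref{propNConnectedFibers} type argument, applied to $q$ restricted to the truncation), using that the $1$-connectivity of the fibers is precisely what the hypothesis provides.

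The main obstacle will be the bookkeeping needed to show that Pulkus--Welker's proof goes through verbatim with ``$N$ solvable'' replaced by ``$\A_p(AN)$ simply connected for $p$-rank $3$'', i.e.\ isolating the exact lemma in \cite{PW00} where solvability is invoked and checking that simple connectivity of the fibers (rather than their full Quillen-type homotopy type) suffices to run the wedge lemma and obtain the fundamental-group formula. In particular one must be careful that the truncation to $\A_p(\cdot)^2$ interacts correctly with the wedge summands $\A_p(BN)*\A_p(G/N)^2_{>\overline B}$, and that the join and the truncation commute up to the $1$-equivalence needed for $\pi_1$; the rank-$3$ hypothesis is exactly the minimal input making this work, since chains in $\A_p(\cdot)^2$ involve at most rank-$3$ subgroups.
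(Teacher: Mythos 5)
Your overall route is the same as the paper's: run the Pulkus--Welker homotopy-colimit argument on the quotient map $f\colon \A_p(G)^2\to\A_p(G/N)^2$, with the rank-$3$ simple-connectivity hypothesis playing the role of solvability. But there is a genuine gap in how you propose to run it. The input to the wedge machinery (\cite[Corollary 2.4]{PW00}, which is what the paper invokes) is \emph{not} that the fibers $f^{-1}(\A_p(G/N)^2_{\leq\overline B})$ be simply connected; it is that each inclusion $f^{-1}(\A_p(G/N)^2_{<\overline B})\hookrightarrow f^{-1}(\A_p(G/N)^2_{\leq\overline B})$ be homotopic to a constant map. Your repeated claim that ``only $1$-connectivity of the fibers matters,'' backed by an appeal to Proposition \ref{propNConnectedFibers}, cannot work: the hypothesis of the lemma yields simple connectivity only of the fibers over rank-$3$ elements, while the fibers over rank-$1$ and rank-$2$ elements are in general merely non-empty, respectively connected --- and it is precisely this failure of higher connectivity that produces the non-trivial wedge summands $\A_p(BN)*\A_p(G/N)^2_{>\overline B}$ and the free factors in the statement. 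Moreover, Björner's fiber theorem and the wedge lemma are different statements; if the fiber theorem's hypothesis (all fibers $1$-connected) did hold, it would make $f$ a $2$-equivalence and give $\pi_1(\A_p(G))\simeq\pi_1(\A_p(G/N))$ with every summand simply connected, i.e.\ you would be proving only a degenerate case of the lemma, not the lemma itself.

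Concretely, what has to be checked, and what your sketch omits, is the following. First, the identification of the fibers: $f^{-1}(\A_p(G/N)^2_{\leq\overline B})=\A_p(BN)$ and $f^{-1}(\A_p(G/N)^2_{<\overline B})=\A_p(BN)-\Max(\A_p(BN))$; this is immediate because $N$ is a $p'$-group, so every $E\in\A_p(BN)$ satisfies $E\cap N=1$ and maps isomorphically onto $EN/N\leq\overline B$ --- neither Lemma \ref{lemmaLinkRetract} nor Remark \ref{remarkInflationSubsapce} is needed (or relevant) here. Second, the null-homotopy must be verified separately in each rank: for $|\overline B|=p$ the sub-fiber is empty; for $|\overline B|=p^2$ the sub-fiber is $0$-dimensional and one needs $\A_p(BN)$ to be \emph{connected} --- this is Remark \ref{remarkConnected}, which rests on the classification of groups with a strongly $p$-embedded subgroup (Theorem \ref{disconnectedCasesTheorem}) and is an ingredient entirely absent from your proposal; for $|\overline B|=p^3$ the sub-fiber is $1$-dimensional and one uses the hypothesis that $\A_p(BN)$ is simply connected. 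In each case the inclusion is null-homotopic because a map from a complex of dimension at most $d$ into a $d$-connected space is null-homotopic. With these verifications in place, \cite[Corollary 2.4]{PW00} gives the asserted wedge decomposition of $\A_p(G)^2$, and the $\pi_1$ formula follows (together with Proposition \ref{propHeightReduction}, whose use in your sketch is fine).
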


\begin{proof}
We essentially follow the proof of Pulkus-Welker \cite[Theorem 1.1]{PW00}. Let $N\leq G$ be a normal $p'$-subgroup of $G$. Write $\overline{G} = G/N$ and let $f:\A_p(G)^2\to \A_p(\overline{G})^2$ be the map induced by taking quotients. Note that it is well defined and surjective. We will use \cite[Corollary 2.4]{PW00}. For this, we have to verify that the inclusions $f^{-1}(\A_p(\overline{G})^2_{<\overline{B}})\hookrightarrow f^{-1}(\A_p(\overline{G})^2_{\leq \overline{B}})$ are homotopic to constant maps. Note that $f^{-1}(\A_p(\overline{G})^2_{<\overline{B}}) = \A_p(BN) - \Max(\A_p(BN))$ and $f^{-1}(\A_p(\overline{G})^2_{\leq \overline{B}}) =  \A_p(BN)$.

By hypothesis and Remark \ref{remarkConnected} below we deduce that $\A_p(BN)-\Max(\A_p(BN))$ and $\A_p(BN)$ are spherical of the corresponding dimension for each $B\leq G$ of $p$-rank at most $3$. For instance, if $B$ has $p$-rank $3$, then $\A_p(BN)-\Max(\A_p(BN))$ is $0$-spherical and $\A_p(BN)$ is $1$-spherical.

The result now follows from the fact that the inclusion of a sphere of dimension $n$ into a sphere of dimension $m>n$ is homotopic to a constant map, and $\A_p(BN)-\Max(\A_p(BN))$ and $\A_p(BN)$ are spherical.
\end{proof}

\begin{remark}\label{remarkConnected}
Let $A$ be an elementary abelian $p$-group of $p$-rank at least $2$ acting on a $p'$-group $N$. Then $\A_p(AN)$ is connected. To show this, assume otherwise and take a minimal counterexample $AN$. Thus, $1 = O_p(AN)$ and $AN =\Omega_1(AN)$ by minimality. Therefore, $N=O_{p'}(AN) = O_{p'}(\Omega_1(AN))$ and $A = AN/N = \Omega_1(AN) / O_{p'}(\Omega_1(AN))$ is one of the groups in the list of Theorem \ref{disconnectedCasesTheorem}. But none of the groups in that list is elementary abelian of $p$-rank at least $2$. Therefore, $\A_p(AN)$ is connected.
\end{remark}

\begin{lemma}\label{lemmaJoinApgLinks}
Let $N$ be a normal $p'$-subgroup of $G$ and let $A\in \A_p(G)$ of $p$-rank at most $3$. Assume Aschbacher's conjecture for $p$-rank $3$. Then, the fundamental group of $\A_p(AN)*\A_p(G/N)^2_{>\overline{A}}$ is free if $|A|=p$ or $p^2$ and trivial if $|A|=p^3$.
\end{lemma}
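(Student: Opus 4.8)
The plan is to split according to $|A|\in\{p,p^2,p^3\}$ and to exploit that $N$ is a $p'$-group: since $A\cap N=1$, the image $\overline{A}=AN/N$ is isomorphic to $A$, and $A$ is a Sylow $p$-subgroup of $AN$. In particular $m_p(AN)=\log_p|A|$, so $\K(\A_p(AN))$ has dimension $\log_p|A|-1$. Moreover every element of $\A_p(G/N)^2$ has rank at most $3$, so the up-set $Y:=\A_p(G/N)^2_{>\overline{A}}$ is empty as soon as $\overline{A}$ has rank $3$. Writing $X:=\A_p(AN)$, the poset under study is the join $X*Y$.

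I would dispose of $|A|=p$ and $|A|=p^2$ together. If $Y\neq\emptyset$, then $X$ and $Y$ are finite non-empty posets and Proposition \ref{propFreePi1Join} immediately gives that $\pi_1(X*Y)$ is free. If $Y=\emptyset$, the join degenerates to $X=\A_p(AN)$; but now $m_p(AN)=\log_p|A|\leq 2$, so $\K(\A_p(AN))$ has dimension at most $1$, i.e. it is a graph (a discrete set when $|A|=p$), and the fundamental group of a graph is free. This settles both cases. Note that when $|A|=p^2$ the poset $X$ is in fact connected by Remark \ref{remarkConnected}, so the join is even simply connected whenever $Y\neq\emptyset$; but we only need freeness.

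The substantial case is $|A|=p^3$. Here $\overline{A}$ has rank $3$, hence is maximal in $\A_p(G/N)^2$, so $Y=\emptyset$ and the join collapses to $X=\A_p(AN)$; the assertion becomes that $\A_p(AN)$ is simply connected, and this is exactly where Aschbacher's conjecture is used. First I would reduce to $O_p(AN)=1$, since otherwise $\A_p(AN)$ is contractible by Proposition \ref{propOpGContractible}. With $O_p(AN)=1$ one checks that $F^*(AN)$ is a $p'$-group: each component of $AN$ is perfect, so its image in the abelian $p$-group $AN/N\cong A$ is trivial, whence every component lies in $N$, and likewise the Fitting part $F(AN)$, being a normal $p'$-subgroup, lies in $O_{p'}(AN)\leq N$. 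From $C_{AN}(F^*(AN))\leq F^*(AN)$ and $A\cap N=1$ it follows that $A$ acts faithfully on $F^*(AN)$ and permutes its components. Following Aschbacher's reduction in \cite{Asc93}, one then reduces the simple connectivity of $\A_p(AN)$ to the configuration of the conjecture in $p$-rank $3$, namely $AN=AF^*(AN)$ with $F^*(AN)$ a single $A$-orbit of copies of one simple $p'$-group $L$. The engine of this reduction is Lemma \ref{lemmaConnectedLinks}, and its hypothesis on the connectivity of the centralizer links is supplied for free by Remark \ref{remarkConnected}: for $T\leq A$ of order $p$, the group $A$ (being abelian) centralizes $T$, so a group of $p$-rank at least $2$ acts on the $p'$-group $C_N(T)$, and the corresponding link is connected.

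The routine parts are the two low-rank cases and the dimension count. The hard part will be the structural reduction when $|A|=p^3$: stripping a general $p'$-group $N$ with $A$-action down to the precise hypotheses of Aschbacher's conjecture (a single $A$-orbit of one simple $p'$-component, with $AN=AF^*(AN)$), disposing of the solvable radical $F(AN)$ and of the $p'$-automorphisms that $N$ may induce outside $F^*(AN)$, and arranging the induction so that Lemma \ref{lemmaConnectedLinks} applies at each stage. It is precisely the rank-$3$ hypothesis that keeps all intermediate centralizer links connected via Remark \ref{remarkConnected}, and this is what makes the reduction go through.
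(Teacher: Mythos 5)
Your handling of the cases $|A|=p$ and $|A|=p^2$ is correct, and in fact slightly cleaner than the paper's: when $Y=\A_p(G/N)^2_{>\overline{A}}$ is non-empty, freeness is immediate from Proposition \ref{propFreePi1Join}, and when $Y$ is empty the join degenerates to $\A_p(AN)$, whose order complex has dimension at most $1$ because $A$ is a Sylow $p$-subgroup of $AN$. Your setup for $|A|=p^3$ is also right as far as it goes: $Y$ is empty, the claim becomes simple connectivity of $\A_p(AN)$, one may assume $O_p(AN)=1$ by Proposition \ref{propOpGContractible}, and then $F^*(AN)\leq N$ with $A$ acting faithfully on it.

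At that point, however, the proof stops, and this is where the gap lies. The entire content of the rank-$3$ case is the reduction from an arbitrary $p'$-group $N$ with faithful $A$-action to the precise configuration of Aschbacher's conjecture, and you do not carry it out: you appeal to ``Aschbacher's reduction'' and propose Lemma \ref{lemmaConnectedLinks} as its engine. That lemma cannot be started here, because it requires a normal subgroup $M\normal AN$ whose Quillen complex is already known to be \emph{simply connected}: every normal subgroup of $AN$ contained in $N$ is a $p'$-group with empty Quillen complex, subgroups of the form $AH$ or $BH$ with $H\leq N$ are in general not normal in $AN$, and taking $M=AN$ is circular. (Your observation via Remark \ref{remarkConnected} supplies the secondary hypothesis on centralizer links, but not the missing simply connected normal subgroup.) What actually closes the argument --- and is the paper's mechanism --- is a minimal-counterexample induction on $N$ powered by the Pulkus--Welker decomposition: if $H$ is a non-trivial proper $A$-invariant normal subgroup of $N$, apply Lemma \ref{lemmaPW} to $AN$ with normal $p'$-subgroup $H$ (its hypothesis holds by minimality of $N$); then $\A_p(A(N/H))$ is simply connected by minimality, and each summand $\A_p(BH)*\A_p(A(N/H))_{>\overline{B}}$ is simply connected, where for $|B|=p$ one needs connectedness of the link $\A_p(A(N/H))_{>\overline{B}}$, which comes from \cite[Theorem 2]{Asc93} and not from Remark \ref{remarkConnected}. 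The resulting contradiction shows that $N$ has no such $H$, hence $N$ is characteristically simple and $A$ permutes its simple factors transitively, which is exactly the hypothesis of the conjecture. Some induction scheme of this kind is indispensable; without it, your sketch of the $|A|=p^3$ case does not constitute a proof.
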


\begin{proof}
We can suppose $N\neq 1$. We examine the possible ranks of $A$. If $|A|=p$, $\A_p(AN)$ is a disjoint union of points while $\A_p(G/N)^2_{>A}$ is a non-empty graph. Thus, their join is homotopic to a wedge of $2$-spheres and $1$-spheres.

If $|A|=p^2$, $\A_p(AN)$ is a connected non-empty graph by the above remark. Note that $\A_p(G/N)^2_{>A}$ may be either empty, if $A$ is maximal, or discrete. Thus, their join is homotopic to a wedge of $2$-spheres or $1$-spheres.

It remains the case $|A|=p^3$. Here, $\A_p(G/N)^2_{>A}$ is empty. Suppose $\A_p(AN)$ is not simply connected and take $N$ a minimal counterexample. We will show that the group $AN$ satisfies the hypotheses of Aschbacher's conjecture (and this leads to a contradiction). We may assume that $C_A(N) = O_p(AN) = 1$, so $A$ acts faithfully on $N$. Suppose that $N$ has a non-trivial proper normal subgroup $H$ which is also $A$-invariant. We apply Lemma \ref{lemmaPW} with $G = AN$ and $H$ as the normal $p'$-subgroup since $\A_p(BH)$ is simply connected for $B\leq AN$ of $p$-rank $3$ by minimality of $N$. Therefore,
\begin{equation}\label{equationWedge}
\A_p(AN) \simeq \A_p(A(N/H)) \bigvee_{\overline{B}\in \A_p(A(N/H))} \A_p(BH) \join \A_p(A(N/H))_{>\overline{B}}
\end{equation}
By minimality, $\A_p(A(N/H))$ is simply connected. Now we show that $X_B=\A_p(BH) \join \A_p(A(N/H))_{>\overline{B}}$ is also simply connected for every $B\in \A_p(AN)$.

If $|B| = p^3$, then $X_B = \A_p(BH)$ and it is simply connected by induction.

If $|B| = p^2$, then $X_B$ is a join of a connected space (see Remark \ref{remarkConnected}) with a non-empty space. Thus, it is simply connected. Note that there are no maximal elements of order $p^2$ in $\A_p(AN)$ since $A$ is a Sylow $p$-subgroup of $AN$.

If $|B| = p$, then $\A_p(BH)$ is non-empty and $ \A_p(A(N/H))_{>\overline{B}}$ is connected by \cite[Theorem 2]{Asc93}. Therefore, $X_B$ is simply connected.

Since all the spaces in the wedge of Equation (\ref{equationWedge}) are simply connected, we deduce that $\A_p(AN)$ is simply connected, which is a contradiction. Therefore, $N$ has no $A$-invariant non-trivial proper normal subgroup. In particular, $N$ must be characteristicaly simple and therefore $N = L_1\times\ldots \times L_n$ is a direct product of isomorphic simple groups on which $A$ acts transitively. Consequently, $AN$ is in the hypotheses of Aschbacher's conjecture and $\A_p(AN)$ is simply connected.
\end{proof}

\begin{remark}
Note that in the proof of Lemma \ref{lemmaJoinApgLinks}, Aschbacher's conjecture only needs to be assumed on the $p'$-simple groups involved in $N$.
\end{remark}

Now we apply these results to reduce to the case $O_{p'}(G) = 1$. Assume $G = \Omega_1(G)$ and Aschbacher's conjecture for $p$-rank $3$. Then, by Lemmas \ref{lemmaPW} and \ref{lemmaJoinApgLinks} and Remark \ref{remarkConnected}, $$\pi_1(\A_p(G)) = \pi_1(\A_p(G/O_{p'}(G))) *_{\overline{B}\in \A_p(G/N)^2} \pi_1(\A_p(BN) * \A_p(G/N)^2_{>\overline{B}}).$$ All the groups $\pi_1(\A_p(BN) * \A_p(G/N)^2_{>\overline{B}})$ are free by Lemma \ref{lemmaJoinApgLinks}. Therefore, $\pi_1(\A_p(G))$ is free whenever $\pi_1(\A_p(G/O_{p'}(G)))$ is. Let $S_G = \Omega_1(G)/O_{p'}(\Omega_1(G))$.

\begin{corollary}\label{coroQuotientByOpprime}
Assume Aschbacher's conjecture for $p$-rank $3$. Then there is an isomorphism $\pi_1(\A_p(G)) \simeq \pi_1(\A_p(S_G)) * F$, where $F$ is a free group. In particular, $\pi_1(\A_p(G))$ is free if 
$\pi_1(\A_p(S_G))$ is free.
\end{corollary}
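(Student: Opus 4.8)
The plan is to assemble the statement from the two technical lemmas just established, Lemma~\ref{lemmaPW} and Lemma~\ref{lemmaJoinApgLinks}, the only real work being to check that their hypotheses are compatible and that the resulting free factors combine into a single free group.

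First I would reduce to $G = \Omega_1(G)$. Since $\A_p(G) = \A_p(\Omega_1(G))$ and $\Omega_1(\Omega_1(G)) = \Omega_1(G)$, both the space $\A_p(G)$ and the group $S_G$ are unchanged by replacing $G$ with $\Omega_1(G)$, so there is no loss of generality. Having done this, put $N = O_{p'}(G)$, so that $S_G = G/N$; as $\A_p(G)$ is assumed connected and the induced map $\A_p(G)\to\A_p(S_G)$ is surjective, $\A_p(S_G)$ is connected and $\pi_1(\A_p(S_G))$ is well defined. If $N = 1$ then $S_G = G$ and the statement holds with $F$ trivial, so I may assume $N\neq 1$.

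Next I would verify the hypothesis of Lemma~\ref{lemmaPW}, namely that $\A_p(AN)$ is simply connected for every elementary abelian $A\leq G$ of $p$-rank $3$. This is exactly the content of the $|A| = p^3$ case treated in the proof of Lemma~\ref{lemmaJoinApgLinks}, where, under Aschbacher's conjecture for $p$-rank $3$, a minimal counterexample is shown to satisfy the hypotheses of the conjecture and is therefore simply connected. With this in hand, Lemma~\ref{lemmaPW} applies to $N$ and (using $\pi_1(\A_p(G)) = \pi_1(\A_p(G)^2)$ from Proposition~\ref{propHeightReduction}) produces the free product
\[
\pi_1(\A_p(G)) = \pi_1(\A_p(S_G)) *_{\overline{B}\in\A_p(G/N)^2} \pi_1\!\left(\A_p(BN) * \A_p(G/N)^2_{>\overline{B}}\right),
\]
where the indices $\overline{B}$ run over the elements of $\A_p(G/N)^2$, hence over subgroups of $p$-rank at most $3$.

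Finally, Lemma~\ref{lemmaJoinApgLinks} shows that each factor $\pi_1(\A_p(BN) * \A_p(G/N)^2_{>\overline{B}})$ is free (it is free when $|\overline{B}|$ is $p$ or $p^2$ and trivial when $|\overline{B}| = p^3$). Denoting by $F$ the free product of these groups over all $\overline{B}$, a free product of free groups is again free, so $F$ is free and $\pi_1(\A_p(G)) = \pi_1(\A_p(S_G)) * F$. The ``in particular'' clause follows at once: if $\pi_1(\A_p(S_G))$ is free then $\pi_1(\A_p(S_G)) * F$ is a free product of free groups and hence free. The one point that genuinely needs attention, rather than being formal bookkeeping, is matching the two lemmas: the simple connectivity of $\A_p(AN)$ for rank-$3$ subgroups $A$ that Lemma~\ref{lemmaPW} demands is supplied only by the case analysis of Lemma~\ref{lemmaJoinApgLinks}, and only under Aschbacher's conjecture in $p$-rank $3$, so this is precisely where the hypothesis is used.
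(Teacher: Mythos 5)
Your proposal is correct and follows essentially the same route as the paper: after reducing to $G=\Omega_1(G)$, the paper applies Lemma~\ref{lemmaPW} with $N=O_{p'}(G)$ (its simple-connectivity hypothesis being exactly the rank-$3$ case of Lemma~\ref{lemmaJoinApgLinks}, which is where Aschbacher's conjecture enters, together with Remark~\ref{remarkConnected}), and then invokes Lemma~\ref{lemmaJoinApgLinks} again to conclude that every factor in the resulting free product is free. The only differences are bookkeeping details, such as your explicit treatment of the case $N=1$.
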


\begin{remark}
In Corollary \ref{coroQuotientByOpprime}, we only need Aschbacher's conjecture to hold on the $p'$-simple groups involved in $O_{p'}(\Omega_1(G))$.
\end{remark}

We finish this section with some remarks concerning Aschbacher's conjecture. Recall the statement of the conjecture.

\begin{aconjecture}
Let $G$ be a finite group such that $G = AF^*(G)$, where $A$ is an elementary abelian $p$-subgroup of rank $r\geq 3$ and $F^*(G)$ is the direct product of the $A$-conjugates of a simple component $L$ of $G$ of order prime to $p$. Then $\A_p(G)$ is simply connected.
\end{aconjecture}

Aschbacher showed that the conjecture holds for a wide class of simple groups $L$: the alternating groups, the groups of Lie type and Lie rank at least $2$, the Mathieu sporadic groups and the groups $L_2(q)$ with $q$ even (see \cite[Theorem 3]{Asc93}). The case of the Lyons sporadic group is proved in \cite{AS92}, and Segev dealt with many of the groups of Lie type and Lie rank $1$ in \cite{Seg94}.

In the following proposition we reduce the study of Aschbacher's conjecture to the $p$-rank $3$ case.

\begin{proposition}\label{propReductionAschbacherConjecture}
If Aschbacher's conjecture holds for $p$-rank $3$, then it holds for any $p$-rank $r\geq 3$. Moreover, if the conjecture holds in $p$-rank $3$ for a $p'$-simple group $L$ then it holds in any $p$-rank $r\geq 3$ for $L$.
\end{proposition}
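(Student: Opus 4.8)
The plan is to reduce the rank-$r$ case directly to the rank-$3$ case (no induction on $r$ is needed) by feeding the normal $p'$-subgroup $N:=F^*(G)$ into the Pulkus--Welker decomposition. It suffices to prove the refined statement, since the first assertion is the special case in which $L$ ranges over all admissible simple components; so fix a $p'$-simple group $L$ for which the rank-$3$ conjecture is assumed, and a group $G=AF^*(G)$ as in the conjecture, with $A$ elementary abelian of rank $r$ and $N=F^*(G)$ the direct product of the $A$-conjugates of $L$. We may assume $r\geq 4$, since $r=3$ is exactly the hypothesis. As $A$ is a $p$-group and $N$ is a $p'$-group, $A\cap N=1$, so $A$ is a Sylow $p$-subgroup of $G$, $G/N\cong A$, and every elementary abelian $p$-subgroup of $G$ is $G$-conjugate into $A$.

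First I would verify the hypothesis of Lemma \ref{lemmaPW} for $N\normal G$: for every elementary abelian $B'\leq G$ of $p$-rank $3$, $\A_p(B'N)$ is simply connected. By conjugacy I may take $B'\leq A$; since the $p'$-simple groups involved in $N$ are all isomorphic to $L$ and the rank-$3$ conjecture is assumed for $L$, the $|A|=p^3$ case of Lemma \ref{lemmaJoinApgLinks} (in which the join factor $\A_p(G/N)^2_{>\overline{B'}}$ is empty) gives $\pi_1(\A_p(B'N))=1$. There is no circularity, as Lemma \ref{lemmaJoinApgLinks} invokes the conjecture only in rank $3$.

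With the hypothesis verified, Lemma \ref{lemmaPW} yields
\[
\A_p(G)^2\simeq \A_p(A)^2\bigvee_{\overline{B}\in\A_p(A)^2}\A_p(BN)\join \A_p(A)^2_{>\overline{B}},
\]
and since $\pi_1(\A_p(G))=\pi_1(\A_p(G)^2)$ by Proposition \ref{propHeightReduction}, it is enough to show that every wedge summand is simply connected. The base $\A_p(A)^2$ is simply connected: $\A_p(A)$ is contractible (it has maximum $A$) and $\A_p(A)^2\hookrightarrow\A_p(A)$ induces an isomorphism on $\pi_0$ and $\pi_1$ by Proposition \ref{propHeightReduction}. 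For the remaining summands I would split according to the rank $s\in\{1,2,3\}$ of $\overline{B}\leq A$. If $s=3$, then $\A_p(A)^2_{>\overline{B}}$ is empty and the summand is $\A_p(BN)$, simply connected by the previous paragraph. If $s=2$, then $\A_p(BN)$ is connected by Remark \ref{remarkConnected}, while $\A_p(A)^2_{>\overline{B}}$ is the nonempty (since $r\geq 3$) discrete set of rank-$1$ subgroups of $A/\overline{B}$; by Proposition \ref{propFreePi1Join} the join has free $\pi_1$ of rank $(|\pi_0(\A_p(BN))|-1)(\cdots)=0$, hence is simply connected. If $s=1$, then $\A_p(A)^2_{>\overline{B}}$ is isomorphic to the poset of rank-$1$ and rank-$2$ subgroups of the elementary abelian group $A/\overline{B}$ of rank $r-1$, which is connected for $r\geq 4$; Proposition \ref{propFreePi1Join} again forces the rank of the (free) fundamental group of the join to vanish.

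The main point, and the only place where $r\geq 4$ enters, is the $s=1$ and $s=2$ analysis: Proposition \ref{propFreePi1Join} a priori yields only a free, possibly nontrivial, fundamental group for a join, and what makes it trivial is the connectivity (for $s=1$), respectively the nonemptiness of the surviving factor together with connectivity of $\A_p(BN)$ (for $s=2$), both of which rest on $r-1\geq 2$. The $s=3$ summands are exactly where the rank-$3$ conjecture for $L$ is consumed, which is why the argument needs it for $L$ only. Since every wedge summand is simply connected, $\A_p(G)^2$, and hence $\A_p(G)$, is simply connected, completing the reduction.
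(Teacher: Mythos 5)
Your skeleton is the same as the paper's: feed $N=F^*(G)$ into Lemma \ref{lemmaPW}, kill the rank-$1$ and rank-$2$ wedge summands by the connectivity computations (Remark \ref{remarkConnected} together with Proposition \ref{propFreePi1Join}), and kill the rank-$3$ summands $\A_p(BN)$ using the rank-$3$ conjecture. Your treatment of the base $\A_p(A)^2$ and of the ranks $s=1,2$ is correct, and since the first assertion of the proposition assumes the rank-$3$ conjecture for \emph{all} $p'$-simple groups, your black-box appeal to Lemma \ref{lemmaJoinApgLinks} is legitimate there; so the first assertion is proved.

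The gap is in the ``moreover'' refinement, precisely at the sentence ``since the $p'$-simple groups involved in $N$ are all isomorphic to $L$''. With the paper's definition of involvement ($H\simeq K/M$ for $M\normal K\leq N$), this claim is false: the simple groups involved in $L^n$ are exactly the simple groups involved in $L$, and this set is in general strictly larger than $\{L\}$ (for instance $A_5$ is involved in $A_6$, hence in $A_6^n$). So the remark following Lemma \ref{lemmaJoinApgLinks}, taken at face value, lets you apply that lemma only when the rank-$3$ conjecture is assumed for every $p'$-simple section of $L$ --- which is not your hypothesis. The conclusion you need is nonetheless true, but to see it you must open up the proof of Lemma \ref{lemmaJoinApgLinks}: its minimal-counterexample recursion passes only to $A$-invariant \emph{normal} subgroups $H\normal N$ and to quotients $N/H$, so the characteristically simple groups at which the conjecture is finally invoked are built from \emph{composition factors} of $N$; for $N=L^n$ these are all copies of $L$, because normal subgroups of a product of nonabelian simple groups are sub-products. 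This sharpening is exactly what the paper's proof makes explicit with its transitive/non-transitive dichotomy for the action of a rank-$3$ subgroup $B$ on $\{L_1,\dots,L_n\}$: in the transitive case the rank-$3$ conjecture for $L$ applies directly, and in the non-transitive case one splits $N$ into $B$-orbit sub-products and recurses via Lemma \ref{lemmaPW}, never leaving the class of products of copies of $L$. As written, your proof of the ``moreover'' statement is missing this argument.
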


\begin{proof}
We use again Pulkus-Welker's decomposition restricted to $\A_p(G)^2$, as in Lemma \ref{lemmaPW}.

Let $G=AF^*(G)$ be as in the statement of the conjecture, where $m_p(A)\geq 4$. Assume that the conjecture holds for $p$-rank $3$. Set $N=F^*(G)= L_1\times\ldots\times L_n$ where the $L_i\ 's$ are the components of $G$, all isomorphic to $L$. By Lemma \ref{lemmaPW}, 

\[\A_p(G)^2\simeq \A_p(G/N)^2 \bigvee_{\overline{B}\in \A_p(G/N)^2} \A_p(BN)*\A_p(G/N)^2_{>\overline{B}}\]

Since $G/N\simeq A$, we may replace $\A_p(G/N)$ by $\A_p(A)$ and take $\overline{B} = B\leq A$ without loss of generality. The poset $\A_p(A)$ is contractible, and in particular simply connected.

Note that $\A_p(A)_{>B}^2$ is isomorphic to $\A_p(A/B)^{2-m_p(B)}$. Therefore, it is $0$-connected (resp. $(-1)$-connected) for $|B|=p$ (resp. $p^2$).

On the other hand, $\A_p(BN)$ is $(-1)$-connected and $0$-connected for $|B|=p$ and $p^2$, respectively (see Remark \ref{remarkConnected}). Thus, $\A_p(BN) * \A_p(A)^2_{>B}$ is simply connected when $|B| = p$ or $p^2$.

It remains to show that $\A_p(BN)$ is simply connected if $|B|=p^3$. We may assume $C_B(N) = 1$, so $B$ acts faithfully on $N$. If $B$ acts transitively on the set $\{L_1,\ldots,L_n\}$, we are in the hypotheses of Aschbacher's conjecture for $p$-rank $3$ and $\A_p(BN)$ is simply connected. If the action of $B$ on $\{L_1,\ldots,L_n\}$ is not transitive, then we may write $N = N_1\times\ldots\times N_m$ where each $N_j$ is a $B$-invariant direct product of some of the $L_i\ 's$. Then, inductively and by hypothesis, $\A_p(B(N/N_1))$ and $\A_p(CN_1)$ are simply connected for each $C\leq BN$ of $p$-rank $3$. By Lemma \ref{lemmaPW}, we conclude that $\A_p(BN)$ is simply connected.
\end{proof}

\section{Reduction to the almost simple case}\label{sectionfive}

In this section, we reduce the study of freeness of the fundamental group to the almost simple case. The main result of this section is the following.

\begin{theorem}\label{mainTheorem}
Let $G$ be a finite group and $p$ a prime dividing $|G|$. Assume that Aschbacher's conjecture holds. Then there is an isomorphism $\pi_1(\A_p(G)) = \pi_1(\A_p(S_G)) * F$, where $F$ is a free group. Moreover, $\pi_1(\A_p(S_G))$ is a free group (and therefore $\pi_1(\A_p(G))$ is free) except possibly if $S_G$ is almost simple.
\end{theorem}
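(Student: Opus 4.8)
The plan is to reduce the study of $\pi_1(\A_p(G))$ step by step to the group $S_G = \Omega_1(G)/O_{p'}(\Omega_1(G))$, using the machinery already assembled in the previous section, and then analyze the structure of $S_G$ to show that freeness can only fail in the almost simple case. First I would invoke the reductions already established: since $\A_p(G) = \A_p(\Omega_1(G))$, we may replace $G$ by $\Omega_1(G)$ and assume $G = \Omega_1(G)$. Next, applying Corollary \ref{coroQuotientByOpprime} (which only requires Aschbacher's conjecture in $p$-rank $3$, now available by Proposition \ref{propReductionAschbacherConjecture}), we obtain the isomorphism $\pi_1(\A_p(G)) \simeq \pi_1(\A_p(S_G)) * F$ with $F$ free. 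This already yields the first assertion of the theorem, so the main work is the \emph{moreover} part: showing $\pi_1(\A_p(S_G))$ is free unless $S_G$ is almost simple.

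For the second part, I would study the group $S_G$, which by construction satisfies $O_{p'}(S_G) = 1$ and $O_p(S_G) = 1$ (the latter because $O_p \neq 1$ would make $\A_p(S_G)$ contractible by Proposition \ref{propOpGContractible}, hence simply connected and the conclusion trivial). The key structural fact is that for a group $H$ with $O_{p'}(H) = 1 = O_p(H)$, the generalized Fitting subgroup reduces to $F^*(H) = E(H)$, the central product of the components, and $C_H(F^*(H)) \leq F^*(H)$. I would then split into cases according to the components of $S_G$. If $S_G$ has a single component $L$, then $L \leq S_G \leq \Aut(L)$ and $S_G$ is almost simple --- the exceptional case we cannot rule out. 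If instead $F^*(S_G)$ is a direct product of $n \geq 2$ components, I would exploit the join decomposition from Proposition \ref{propQuillenJoin}: when the components fall into two nontrivial $S_G$-invariant factors $G_1 \times G_2$ each of order divisible by $p$, the poset $\A_p$ decomposes as a join and Proposition \ref{propFreePi1Join} forces $\pi_1$ to be free.

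The delicate point is handling the action of $S_G$ permuting a single orbit of isomorphic components, where no such direct product splitting is available over $S_G$ itself. Here the hard part will be controlling the case where $p$ divides the order of the components. When $p \mid |L|$, I expect to produce the join decomposition after passing to $F^*(S_G) = L_1 \times \cdots \times L_n$ and observing that $\A_p(F^*(S_G)) \simeq \A_p(L_1) * \cdots * \A_p(L_n)$ is a join of at least two nonempty connected pieces, giving simple connectivity directly. When $p \nmid |L|$, so that each component is a $p'$-group, the relevant configuration is precisely $S_G = A F^*(S_G)$ with an elementary abelian $p$-group $A$ acting transitively on the conjugates of a $p'$-simple component --- exactly the hypothesis of Aschbacher's conjecture, which then gives simple connectivity of $\A_p(S_G)$ (hence free, indeed trivial, $\pi_1$). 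I would combine these observations to conclude that the only surviving possibility for non-free $\pi_1(\A_p(S_G))$ is $S_G$ almost simple.

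The main obstacle I anticipate is organizing the case analysis on $F^*(S_G)$ cleanly: one must simultaneously track whether $p$ divides the component orders, whether the $S_G$-action on components is transitive, and whether $C_{S_G}(F^*(S_G))$ contributes additional $p$-torsion, and in each mixed case identify the correct tool (join decomposition, Aschbacher's conjecture, or contractibility) that yields freeness. I would structure the argument to reduce every non-almost-simple configuration to one where either $\A_p$ is a nontrivial join or the Aschbacher hypotheses are met.
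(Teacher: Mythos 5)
Your first step is correct and matches the paper: $G$ may be replaced by $\Omega_1(G)$, and Corollary \ref{coroQuotientByOpprime} (via Proposition \ref{propReductionAschbacherConjecture}) gives $\pi_1(\A_p(G)) \simeq \pi_1(\A_p(S_G)) * F$. But your treatment of the \emph{moreover} part has a genuine gap: you analyze only $\A_p(F^*(S_G))$ and implicitly assume that simple connectivity (or freeness of $\pi_1$) of this subposet settles the question for $\A_p(S_G)$. It does not. The entire difficulty is the passage from the normal subgroup $F^*(S_G)$ to the possibly larger group $S_G$, which acts by permuting the components. The paper handles this with Lemma \ref{lemmaConnectedLinks} (simple connectivity of $\A_p(N)$ for $N \normal G$ propagates to $\A_p(G)$ only when the links $\A_p(C_N(T))$ are \emph{connected} for all $T$ of order $p$), combined with Aschbacher's structure results (10.3) and (10.5) of \cite{Asc93} to pin down exactly when these links are disconnected, and then ad hoc van Kampen arguments on the subposet $\N = \{A \in \A_p(G) : A \cap F^*(G) \neq 1\}$ whose complement is an antichain (Proposition \ref{propCollapsingSCSubspace}). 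Both exceptional configurations genuinely occur: for two components one gets $p=2$ and $G \simeq L \wr C_2$ with $L$ of Lie type and Lie rank $1$ in characteristic $2$, where $\A_2(F^*(G))$ is \emph{not} simply connected (both join factors are disconnected, so your ``join of connected pieces'' claim fails) and $\pi_1(\A_2(G))$ is free of an explicitly computed nonzero rank; for $r>2$ components one gets $r=p$ with each component having a strongly $p$-embedded subgroup, where $\A_p(F^*(G))$ \emph{is} simply connected yet this alone does not give simple connectivity of $\A_p(G)$ --- the paper instead shows $\pi_1(\A_p(G))$ is free via the antichain argument applied to $H = \bigcap_i N_G(L_i)$. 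Your proposal contains no mechanism for either of these steps.

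A secondary confusion: your case ``$p \nmid |L|$'' is vacuous. Once $O_{p'}(S_G) = 1$ (which holds by construction of $S_G$), every component has order divisible by $p$, since the product of all $p'$-components would be a nontrivial normal $p'$-subgroup. Consequently the hypotheses of Aschbacher's conjecture never arise in the analysis of $S_G$ itself; the conjecture is needed only in the reduction step (Lemma \ref{lemmaJoinApgLinks} and Corollary \ref{coroQuotientByOpprime}), applied to the $p'$-simple groups involved in $O_{p'}(\Omega_1(G))$. Relatedly, your appeal to Proposition \ref{propQuillenJoin} when ``the components fall into two nontrivial $S_G$-invariant factors'' is not available: that proposition requires $S_G$ itself to be a direct product $G_1 \times G_2$, not merely that $F^*(S_G)$ admits an invariant factorization; in the hard cases $S_G$ permutes the components transitively and no such splitting of $S_G$ exists (this is exactly condition (C5) in the paper's minimal counterexample setup).
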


Note that by Corollary \ref{coroQuotientByOpprime}, we only need to prove the moreover part. If the theorem does not hold, we can take a minimal counterexample $G$, and then we can assume that $G$ satisfies the following conditions:

\begin{enumerate}
\item[(C1)] $G = \Omega_1(G)$ and $\A_p(G)$ is connected,
\item[(C2)] $O_p(G) = 1$ (since otherwise $\A_p(G)$ is contractible by Proposition \ref{propOpGContractible}),
\item[(C3)] $\pi_1(\A_p(G))$ is not a free group. In particular, $\A_p(G)$ is not simply connected and $m_p(G)\geq 3$,
\item[(C4)] $O_{p'}(G) = 1$ (by minimality and Corollary \ref{coroQuotientByOpprime}),
\item[(C5)] $G\not\simeq G_1\times G_2$ (by Proposition \ref{propQuillenJoin}).
\end{enumerate}

\begin{remark}\label{remarksobrecond}
From conditions (C2) and (C4) we deduce that  $Z(G) = 1$, $Z(E(G)) = 1$, $F(G) = 1$ and $F^*(G) = L_1\times \ldots\times L_r$ is the direct product of simple components of $G$, each one of order divisible by $p$. In particular $C_G(F^*(G)) = Z(E(G)) = 1$, so $F^*(G)\leq G\leq\Aut(F^*(G))$.
\end{remark}

\begin{remark}\label{remarksobreconditions}
If $G$ satisfies the above conditions, by Remark \ref{remarksobrecond}, $F^*(G) = L_1\times \ldots\times L_r$. Therefore  $\A_p(F^*(G))$ has free fundamental group if $r = 2$, and it is simply connected for $r > 2$ (see Proposition \ref{propQuillenJoin}). If $r = 1$, $G$ is almost simple. We deal with the cases $r=2$ and $r>2$ separately (see Theorems \ref{theorem2Components} and \ref{theoremPComponents} below).
\end{remark}

In what follows, we do not need to assume Aschbacher's conjecture. In \cite[Sections 7 \& 8]{Asc93}, Aschbacher characterized the groups $G$ for which some link $\A_p(G)_{>T}$, with $T\leq G$ of order $p$, is disconnected. The following proposition deals with the case of connected links. Concretely, \cite[Theorem 1]{Asc93} asserts that if $O_{p'}(G) = 1$ and the links $\A_p(G)_{>T}$ are connected for all $T\leq G$ of order $p$, then either $\A_p(G)$ is simply connected, $G$ is almost simple and $\A_p(G)$ and $\A_p(F^*(G))$ are not simply connected, or else $G$ has certain particular structure. We prove that in the later case, the fundamental group is free.

\begin{proposition}\label{connectedLinksCase}
Suppose $G$ satisfies conditions (C1)...(C5). If the links $\A_p(G)_{>T}$ are connected for all $T\leq G$ of order $p$, then $G$ is almost simple and $\A_p(F^*(G))$ is not simply connected.
\end{proposition}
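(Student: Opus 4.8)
The plan is to feed the hypotheses straight into Aschbacher's structure theorem \cite[Theorem 1]{Asc93} and then peel off all but the desired alternative using (C1)--(C5). By (C4) we have $O_{p'}(G)=1$, and by assumption $\A_p(G)_{>T}$ is connected for every $T\leq G$ of order $p$, so the hypotheses of \cite[Theorem 1]{Asc93} hold. That theorem gives the trichotomy recalled above: either (a) $\A_p(G)$ is simply connected, or (b) $G$ is almost simple with neither $\A_p(G)$ nor $\A_p(F^*(G))$ simply connected, or (c) $G$ has the remaining exceptional structure. Alternative (a) is ruled out immediately by (C3), which asserts that $\A_p(G)$ is not simply connected; alternative (b) is exactly the conclusion we want. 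So the entire content of the proof is to eliminate (c) by showing that it forces $\pi_1(\A_p(G))$ to be free, contradicting (C3).

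To treat (c), observe that this case is not almost simple (every almost simple $G$ lands in (a) or (b)), so by Remark \ref{remarksobrecond} we have $F^*(G)=L_1\times\cdots\times L_r$ with $r\geq 2$. By Proposition \ref{propQuillenJoin}, $\A_p(F^*(G))$ is the iterated join $\A_p(L_1)\join\cdots\join\A_p(L_r)$, whose fundamental group is free by Proposition \ref{propFreePi1Join} (indeed trivial once $r\geq 3$). The substantive step is to transfer freeness from the normal subgroup $F^*(G)$ up to $G$. Because (C5) forbids a direct-product decomposition, $G$ must permute the components non-trivially, so $\A_p(G)$ is strictly larger than the join and its fundamental group is not formally determined by that of $\A_p(F^*(G))$. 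I would show that, under the connected-links hypothesis, $\A_p(G)$ is built from a simply connected subposet $Y$ — assembled from $\A_p(F^*(G))$ together with the connected links via Lemma \ref{lemmaConnectedLinks} — by adjoining an anti-chain, and then read off freeness from Proposition \ref{propCollapsingSCSubspace}. This contradicts (C3), eliminates (c), and leaves (b).

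The main obstacle is precisely this transfer in case (c): one cannot use \cite[Theorem 1]{Asc93} as a pure black box, since once the outer action fuses the components the freeness of $\pi_1(\A_p(F^*(G)))$ says nothing direct about $\pi_1(\A_p(G))$. The crux is to extract from the explicit form of Aschbacher's exceptional configuration that the complement of the relevant simply connected subposet is an anti-chain — equivalently that the centralizer complexes $\A_p(C_{F^*(G)}(T))$ are connected — so that the collapsing argument of Proposition \ref{propCollapsingSCSubspace} genuinely applies.
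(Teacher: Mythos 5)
Your overall architecture coincides with the paper's: feed (C1)--(C5) and the connected-links hypothesis into \cite[Theorem 1]{Asc93}, discard the simply connected alternative by (C3), recognize the almost simple alternative as the desired conclusion, and reduce everything to showing that the residual exceptional case forces $\pi_1(\A_p(G))$ to be free. The divergence --- and the genuine gap --- is in how you propose to handle that residual case. Proposition \ref{propCollapsingSCSubspace} requires the subposet $Y$ to be \emph{simply connected}, and in the residual case the natural candidate $\N=\{X\in\A_p(G): X\cap F^*(G)\neq 1\}\simeq \A_p(F^*(G))$ (Remark \ref{remarkInflationSubsapce}) is precisely \emph{not} simply connected: for this case to be distinct from the simply connected alternative, one is forced into $F^*(G)=L_1\times L_2$ with both $\A_p(L_i)$ disconnected, so $\pi_1(\A_p(F^*(G)))$ is a nontrivial free group by Propositions \ref{propQuillenJoin} and \ref{propFreePi1Join} (if there were three or more components, the join would already be simply connected). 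For the same reason you cannot invoke Lemma \ref{lemmaConnectedLinks} to manufacture a simply connected $Y$: its hypothesis is simple connectivity of $\A_p(N)$, which is exactly what is unavailable. Your closing claim that the anti-chain property of the complement is ``equivalently that the centralizer complexes $\A_p(C_{F^*(G)}(T))$ are connected'' also conflates two unrelated conditions: the complement being an anti-chain is a rank statement about subgroups meeting $F^*(G)$ trivially, while connectivity of centralizers is the hypothesis of Lemma \ref{lemmaConnectedLinks}.

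The paper closes this case differently, and more cheaply: it extracts from the \emph{proof} of \cite[(10.3)]{Asc93} that under these hypotheses the inclusion $\A_p(F^*(G))\hookrightarrow\A_p(G)$ is a homotopy equivalence, whence $\pi_1(\A_p(G))\simeq\pi_1(\A_p(F^*(G)))$ is free by Proposition \ref{propQuillenJoin}, contradicting (C3). The collapsing-plus-anti-chain technique you propose is genuinely used in the paper, but only in Theorem \ref{theoremPComponents}, where having at least three components guarantees the required simple connectivity; the two-component configuration needs either the homotopy equivalence above or the finer van Kampen computation carried out in Theorem \ref{theorem2Components}. As written, your elimination of case (c) would fail exactly on the configurations that case contains.
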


\begin{proof}
We use \cite[Theorem 1]{Asc93}. By conditions (C1)...(C5), $G$ corresponds either to case (3) or case (4) of \cite[Theorem 1]{Asc93}. Case (4) implies that $G$ is almost simple and $\A_p(F^*(G))$ is not simply connected. If $G$ is in case (3) of \cite[Theorem 1]{Asc93}, then $\pi_1(\A_p(G))$ is free (which is a contradiction by (C3)). This is deduced from the proof of \cite[(10.3)]{Asc93}, since under these hypotheses $\A_p(G)$ and $\A_p(F^*(G))$ are homotopy equivalent, and $\pi_1(\A_p(F^*(G)))$ is free by Proposition \ref{propQuillenJoin}.
\end{proof}

\begin{remark}
In \cite[Theorem 1]{Asc93}, Aschbacher's conjecture is required. However, since we are assuming $O_{p'}(G) = 1$, we do not need to assume the conjecture in the above proposition.
\end{remark}


For the rest of this section we will assume that $G$ is not almost simple, so $F^*(G) = L_1\times \ldots \times  L_r$ with $r > 1$. We deal with the cases $r=2$ and $r>2$ separately.

\begin{remark}\label{remarkBenderGroups}
Let $L$ be a simple group with a strongly $2$-embedded subgroup, i.e. such that $\A_2(L)$ is disconnected. Then $L$ is a simple group of Lie type and Lie rank $1$ in characteristic $2$ and it is isomorphic to one of the simple groups $L_2(2^n), U_3(2^n)$ or $Sz(2^{2n+1})$ by Theorem \ref{disconnectedCasesTheorem}. In any case, the Sylow $2$-subgroups of $L$ have the trivial intersection property. That is, $P\cap P^g = 1$ if $g\in L - N_L(P)$ (see \cite[Theorem 7]{Sei82}). Therefore, $\A_2(L)$ has $|\Syl_2(L)|$ connected components. If $C$ is a connected component of $\A_2(G)$, $C  = \A_2(P)$ for some $P\in \Syl_2(L)$ and thus, $\A_2(P)$ is contractible by Proposition \ref{propOpGContractible}. In particular, the components of $\A_2(L)$ are simply connected.
\end{remark}

\begin{theorem}\label{theorem2Components}
Under conditions (C1)...(C5), if $F^*(G) = L_1\times L_2$ is a direct product of two simple groups, then $p = 2$, $G \simeq L\wr C_2$ (the standard wreath product), with $L$ a simple group of Lie type and Lie rank $1$ in characteristic $2$, $L_1\simeq L_2\simeq L$ and $\pi_1(\A_2(G))$ is a free group with $(|\Syl_2(L)|-1)(|\Syl_2(L)|-1 +|L|)$ generators.
\end{theorem}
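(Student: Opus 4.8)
The plan is to pin down the isomorphism type of $G$ purely from the algebra in conditions (C1)--(C5), and to compute $\pi_1$ only at the very end. By Remark \ref{remarksobrecond} we have $F^*(G)=L_1\times L_2$ with $C_G(F^*(G))=1$, so $G\le\Aut(L_1\times L_2)$ and $G$ acts on $\{L_1,L_2\}$; I let $N$ be the kernel of this action, so $[G:N]\in\{1,2\}$. The key preliminary step is a lemma: if a subgroup $H$ in our situation normalises both components (so $H\le\Aut(L_1)\times\Aut(L_2)$), then $\pi_1(\A_p(H))$ is free. To prove it I would use the two commuting almost simple normal subgroups $C_H(L_2)$ and $C_H(L_1)$ (with $L_i\le C_H(L_{3-i})\le\Aut(L_i)$), whose product is direct and whose join $\A_p(C_H(L_2))*\A_p(C_H(L_1))$ has free fundamental group by Propositions \ref{propQuillenJoin} and \ref{propFreePi1Join}; the only obstruction is the solvable ``diagonal'' quotient of $H$ by this product, coming from outer automorphisms acting on both factors at once, which has to be absorbed inductively. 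Granting the lemma, $G=N$ would force $\pi_1(\A_p(G))$ free, contradicting (C3); hence $[G:N]=2$, conjugation by an element outside $N$ interchanges the components, and $L_1\simeq L_2=:L$.

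Next I would force $p=2$. If $p$ were odd, then any element of order $p$ maps to an element of order dividing $p$ in $G/N\simeq C_2$, hence to the identity, so it lies in $N$. Therefore $\A_p(G)=\A_p(N)$; but $N$ normalises both components, so the lemma above again makes $\pi_1(\A_p(G))$ free, contradicting (C3). Thus $p=2$.

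Now with $p=2$ and the swap in place, I would identify $L$ and show $G=L\wr C_2$. First, $\A_2(L)$ must be disconnected: if it were connected then $\A_2(F^*(G))=\A_2(L)*\A_2(L)$ would be simply connected by Proposition \ref{propQuillenJoin}, and the centraliser hypotheses of Lemma \ref{lemmaConnectedLinks} with $N=F^*(G)$ (the relevant $C_{F^*(G)}(T)$ being readily checked to have connected $\A_2$) would make $\A_2(G)$ simply connected, contradicting (C3). So $L$ has a strongly $2$-embedded subgroup, and by Theorem \ref{disconnectedCasesTheorem} and Remark \ref{remarkBenderGroups} it is one of $L_2(2^n)$, $U_3(2^n)$, $Sz(2^{2n+1})$, with trivial-intersection Sylow $2$-subgroups and $\A_2(L)$ a disjoint union of $s:=|\Syl_2(L)|$ contractible components. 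Finally, any outer automorphisms in the base $N$ would place $N$ in the ``normalises both components'' case of the lemma, whence minimality forces $N=L\times L$ and $G=L\wr C_2=(L\times L)\rtimes\langle\tau\rangle$, with $\tau$ the swap.

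It remains to compute $\pi_1(\A_2(G))$, which I may do on $\A_2(G)^2$ by Proposition \ref{propHeightReduction}. The subposet $\A_2(F^*(G))=\A_2(L)*\A_2(L)$ is homotopy equivalent to the join of two $s$-point sets, i.e. a wedge of $(s-1)^2$ circles. The remaining elements of $\A_2(G)$ are precisely the elementary abelian $2$-subgroups containing a swapping involution; these involutions are exactly the $|L|$ elements $(a,a^{-1})\tau$ with $a\in L$, and every $2$-subgroup above such an element has its non-swapping part inside the diagonal $C_{L\times L}((a,a^{-1})\tau)\simeq L$. I would attach this swapping part to the wedge $\A_2(L)*\A_2(L)$ by van Kampen's theorem, showing that the attachment creates no new relations (so $\pi_1$ stays free) and that the swapping involutions contribute exactly $(s-1)\,|L|$ new generators, giving a free group of rank $(s-1)^2+(s-1)|L|=(|\Syl_2(L)|-1)(|\Syl_2(L)|-1+|L|)$. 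The main obstacle is exactly this last van Kampen gluing: bounding the swapping cells tightly enough to certify freeness and to extract the precise generator count; the structural reductions are comparatively routine once the ``normalising both components gives free $\pi_1$'' lemma --- the other delicate point --- is established.
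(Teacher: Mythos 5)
Your endgame (the wreath-product case and the generator count) has the right shape, but the structural reduction that gets you there rests entirely on an unproven lemma, and that lemma is not a preliminary fact: it is the hard, non-swapping half of the theorem itself. You claim that $\pi_1(\A_p(H))$ is free whenever $L_1\times L_2\le H\le \Aut(L_1)\times\Aut(L_2)$, with the difficulty "absorbed inductively" into the solvable diagonal quotient. This sketch does not engage with the actual obstruction. Writing $K=C_H(L_2)\times C_H(L_1)$ and $\N=\{A\in\A_p(H):A\cap K\neq 1\}\simeq\A_p(K)$, the subgroups avoiding $K$ need not form an antichain (outer automorphism groups can contain non-cyclic $p$-subgroups), and when you glue $\A_p(H)_{\geq A}$ onto $\N$, van Kampen kills the image of $\pi_1(\A_p(C_K(A)))$ inside the free group $\pi_1(\N)$; a free group modulo an arbitrary normal subgroup need not be free, so freeness requires genuine control of the homotopy type of these centralizer links. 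That control is precisely what the deep results of \cite{Asc93} supply, and it is how the paper argues: when $\A_p(F^*(G))$ is simply connected, Lemma \ref{lemmaConnectedLinks} together with (10.5) of \cite{Asc93} yields a contradiction; when it is not, (10.3) of \cite{Asc93} forces either $\A_p(G)\simeq\A_p(F^*(G))$ (whence $\pi_1$ is free, contradicting (C3)) or $p=2$ and $G=L_1\wr T$. Without an input of this kind your lemma simply assumes the conclusion for the hardest class of groups the theorem covers, so the argument is circular at its core.

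Two secondary problems. First, "minimality forces $N=L\times L$" is not available: the theorem is stated under (C1)--(C5) alone, and even inside the minimal-counterexample framework, applying Theorem \ref{mainTheorem} to the proper subgroup $N$ would require Aschbacher's conjecture, which the paper deliberately avoids throughout this section. Second, the claim that the relevant $C_{F^*(G)}(T)$ are "readily checked" to have connected $\A_2$ is unjustified when $T$ normalizes both components and acts by outer automorphisms: if $C_{L_2}(T)$ has odd order then $\A_2(C_{F^*(G)}(T))=\A_2(C_{L_1}(T))$, which may be disconnected; the paper never verifies such connectivity directly, but instead runs Lemma \ref{lemmaConnectedLinks} in the contrapositive and classifies the resulting disconnected links via (10.5). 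By contrast, the step you flag as the main obstacle --- the final van Kampen gluing --- is the part that goes through smoothly and coincides with the paper's computation: each swapping involution $A$ contributes a contractible cone $\A_2(G)_{\geq A}$ meeting $\N\simeq\A_2(L\times L)$ in $\A_2(C_N(A))\simeq\A_2(L)$, which has $|\Syl_2(L)|$ simply connected components by the trivial-intersection property (Remark \ref{remarkBenderGroups}); since distinct cones intersect inside $\N$, van Kampen gives a free product and the count $(|\Syl_2(L)|-1)^2+|L|(|\Syl_2(L)|-1)$.
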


\begin{proof}
Note that $\A_p(F^*(G))$ is homotopy equivalent to $\A_p(L_1)*\A_p(L_2)$, which is simply connected if and only if $\A_p(L_1)$ or $\A_p(L_2)$ is connected (see Proposition \ref{propQuillenJoin}).

Assume $\A_p(F^*(G))$ is simply connected. Since $\A_p(G)$ is not simply connected,  by Lemma \ref{lemmaConnectedLinks} there exists some subgroup $T\leq G$ of order $p$ such that $\A_p(C_{F^*(G)}(T))$ is disconnected. Since $F^*(G) = L_1\times L_2$, $m_p(F^*(G)) > 2$ by simple connectivity. By \cite[(10.5)]{Asc93} $T$ acts regularly on the set of components of $G$ and each $L_i$ has a strongly $p$-embedded subgroup. In particular $p = 2$ and $L_1\simeq L_2$. Since $\A_p(F^*(G)) \simeq  \A_p(L_1) * \A_p(L_2)$ is simply connected, then $\A_p(L_i)$ is connected for some $i$ by Proposition \ref{propQuillenJoin}, so $L_i$ does not have a strongly $p$-embedded subgroup, which is a contradiction.

Now suppose $\A_p(F^*(G))$ is not simply connected. Then, $\pi_1(\A_p(F^*(G)))$ is a free group by Proposition \ref{propQuillenJoin}, and $L_1$ and $L_2$ are simple groups with strongly $p$-embedded subgroups. We use \cite[(10.3)]{Asc93}. By the above hypotheses, $G$ corresponds to either case (2) or case (3) of \cite[(10.3)]{Asc93}. In case (3), as we mentioned in the proof of Proposition \ref{connectedLinksCase}, $\A_p(G)$ and $\A_p(F^*(G))$ are homotopy equivalent (which contradicts the conditions on $G$). Therefore, $G$ is in case (2) of \cite[(10.3)]{Asc93}, $p=2$ and $G = L_1 \wr T$ for some $T\leq G$ of order $2$. Then, $T = \gen{t}$ for an involution $t \in G$, and $L_1$ is a group of Lie type and Lie rank $1$ in characteristic $2$ by Remark \ref{remarkBenderGroups}.

We prove now that $\pi_1(\A_2(G))$ is free.  Let $N = F^*(G) = L_1\times L_2$ and $L = L_1$. Let $\N = \{A\in \A_2(G) : A\cap N \neq 1\}$ and let $\S = \A_p(G) - \N$ be the complement of $\N$ in $\A_2(G)$. If $A\in\S$ then $A\simeq AN/N\leq TN/N = T$, then $|A| = 2$. Therefore, $\S = \{A\leq G: |A| = 2, A\nleq N\}$ consists of some minimal elements of the poset, and we have
$$\A_2(G)=\N\bigcup_{A\in \S} \A_2(G)_{\geq A}.$$

 For each $A\in \S$, $\N\cap  \A_2(G)_{\geq A} = \{W\in \A_2(G): W\cap N\neq 1, W\geq A\}\simeq \A_2(C_N(A))$ by Lemma \ref{lemmaLinkRetract}. Note that $C_N(A) \simeq L$ since $G = NA$. By Remark \ref{remarkBenderGroups}, $\A_2(L)$ has $|\Syl_2(L)|$ connected components and each component is simply connected. Since  $\A_2(G)_{\geq A}$ is contractible, then, by the non-connected version of van Kampen theorem (see \cite[Section 9.1]{Bro}), $\pi_1(\N\cup \A_2(G)_{\geq A}) = \pi_1(\N)* F_A$, where $F_A$ is the free group of rank $|\Syl_2(L)| - 1$. Since $\A_2(G)_{\geq A}\cap \A_2(G)_{\geq B}\subseteq \N$ for each 
 $A\neq B\in \S$, then recursively we have $\pi_1(\A_2(G))\simeq \pi_1(\N)*F$, where $F$ is the free group of rank $(|\Syl_2(L)|-1)|\S|$.

By Remark \ref{remarkInflationSubsapce}, $\N \simeq \A_2(N) = \A_2(L_1\times L_2)$. Therefore $\pi_1(\N)$ is a free group on $(|\Syl_2(L)|-1)^2=(|\pi_0(\A_2(L_1))|-1)(|\pi_0(\A_2(L_2))|-1)$ generators by Proposition \ref{propFreePi1Join}.

Finally we compute $|\S|$. Let $\mathcal{I}(G)$ be the number of distinct involutions in $G$. Therefore, $\mathcal{I}(G) = \mathcal{I}(N) + s$, where $s$ is the number of involutions not contained in $N$. Note that $s = |\S|$. If $g\in G - N$ is an involution, then $g = xyt$ with $x\in L_1$ and $y\in L_2$. The condition $g^2=1$ implies $1 = xytxyt = xy x^t y^t = (x y^t)(yx^t)$ with $y^t\in L_1$ and $x^t\in L_2$. Since $L_1\cap L_2 = 1$, $xy^t = 1$ and $yx^t = 1$, i.e. $y = (x^{-1})^t$. Therefore, $g = x (x^{-1})^t t = x t x^{-1}$ and $s = |\{ x(x^{-1})^t t : x\in L_1\}| = |L_1| = |L|$.

In conclusion, $ \pi_1(\A_2(G))$ is a free group with $(|\Syl_2(L)|-1)^2 + |L|(|\Syl_2(L)|-1)$ generators.
\end{proof}

Now we deal with the case $r > 2$.

\begin{theorem}\label{theoremPComponents}
Under conditions (C1)...(C5), if $F^*(G) = L_1\times \ldots \times L_r$ is a direct product of simple groups with $r > 2$, then $p$ is odd, $r = p$, each $L_i$ has a strongly $p$-embedded subgroup, $\{L_1,\ldots,L_r\}$ is permuted regularly by some subgroup of order $p$ of $G$, and $\pi_1(\A_p(G))$ is a free group.
\end{theorem}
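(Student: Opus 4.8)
The structural part follows the pattern of the proof of Theorem~\ref{theorem2Components}. Since $r>2$, the poset $\A_p(F^*(G))\simeq\A_p(L_1)\join\cdots\join\A_p(L_r)$ is a join of at least three non-empty posets, hence simply connected by Proposition~\ref{propQuillenJoin}. As $\A_p(G)$ is not simply connected by (C3), Lemma~\ref{lemmaConnectedLinks} forces the existence of a subgroup $T\leq G$ of order $p$ for which $\A_p(C_{F^*(G)}(T))$ is disconnected. I would then invoke Aschbacher's analysis \cite[(10.5)]{Asc93} to conclude that $T$ permutes $\{L_1,\ldots,L_r\}$ regularly and that each $L_i$ has a strongly $p$-embedded subgroup. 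A regular action of a group of order $p$ has a single orbit of length $p$, so $r=p$; and since $r>2$ this forces $p$ to be odd. This settles every assertion except the freeness of $\pi_1(\A_p(G))$.

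For the freeness I would set $N=F^*(G)$ and reuse the decomposition from the proof of Theorem~\ref{theorem2Components}. Put $\N=\{A\in\A_p(G):A\cap N\neq 1\}$ and $\S=\A_p(G)-\N$. By Remark~\ref{remarkInflationSubsapce}, $\N\simeq\A_p(N)$, which is simply connected as above; moreover $\N$ is a filter and $\S$ an order ideal, so the minimal elements of $\S$ have order $p$ and
\[\A_p(G)=\N\bigcup_{B\in\Min(\S)}\A_p(G)_{\geq B},\]
where each $\A_p(G)_{\geq B}$ is a contractible cone meeting $\N$ in $\N\cap\A_p(G)_{>B}\simeq\A_p(C_N(B))$ by Lemma~\ref{lemmaLinkRetract}. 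Since $\N$ and every cone are simply connected, a van Kampen argument shows that gluing one cone along the link $\A_p(C_N(B))$ only adds a free group of rank $|\pi_0(\A_p(C_N(B)))|-1$ (the quotient by a simply connected subspace being a wedge of suspensions, exactly as in Proposition~\ref{propCollapsingSCSubspace}). If $\S$ were an antichain the cones would meet only inside $\N$, and Proposition~\ref{propCollapsingSCSubspace} would immediately give that $\pi_1(\A_p(G))$ is free.

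The main obstacle is that, in contrast with the case $r=2$ (where $G/N\cong C_2$ makes $\S$ an antichain), here $\S$ need not be an antichain: when $m_p(G/N)\geq 2$ two cones $\A_p(G)_{\geq B}$ and $\A_p(G)_{\geq B'}$ can overlap outside $\N$. To handle this I would first pass to $\A_p(G)^2$ via Proposition~\ref{propHeightReduction}, so that only subgroups of order at most $p^3$ intervene, and then use Aschbacher's structure theorem \cite[(10.3)]{Asc93} to pin down $G$ as a wreath-type extension of $L$ by a transitive $p$-group. This identifies the relevant centralizers $C_N(B)$ as (diagonal) copies of groups assembled from the factors $L_i$, whose $\A_p$ has connected components with free fundamental group --- indeed contractible by Proposition~\ref{propOpGContractible} whenever the strongly $p$-embedded subgroup $M$ satisfies $O_p(M)\neq 1$, as happens for the Lie rank~$1$ factors by Remark~\ref{remarkBenderGroups}. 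The remaining and most delicate step is the bookkeeping of the overlaps: one must check that, throughout the iterated gluing, the cones are only ever amalgamated along disjoint unions of (simply) connected links, so that each stage contributes a free factor. Granting this, $\pi_1(\A_p(G))$ is free, contradicting (C3); hence no minimal counterexample with $r>2$ can exist.
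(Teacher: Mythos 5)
Your structural analysis is exactly the paper's: simple connectivity of $\A_p(F^*(G))\simeq\A_p(L_1)*\cdots*\A_p(L_r)$ as a join of $r>2$ non-empty posets, Lemma \ref{lemmaConnectedLinks} producing a $T$ of order $p$ with $\A_p(C_{F^*(G)}(T))$ disconnected, and case (5) of \cite[(10.5)]{Asc93} giving $r=p$ odd, the regular action, and the strongly $p$-embedded subgroups. The gap is in the freeness part, and you have in fact named it yourself: with $\N$ defined relative to $N=F^*(G)$, the complement $\S$ need not be an antichain, the cones $\A_p(G)_{\geq B}$ can overlap outside $\N$, and your argument never resolves this --- the sentence ``Granting this, $\pi_1(\A_p(G))$ is free'' defers precisely the point that needs proof. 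The route you sketch for closing it is also doubtful: \cite[(10.3)]{Asc93} is used in the paper only in the $r=2$ situation where $\A_p(F^*(G))$ is \emph{not} simply connected, so it is not available here to pin down $G$ as a wreath-type extension; and Remark \ref{remarkBenderGroups} concerns $p=2$, whereas here $p$ is odd, so the factors $L_i$ with strongly $p$-embedded subgroups may be $A_{2p}$, $M_{11}$, $L_3(4)$, etc. (Theorem \ref{disconnectedCasesTheorem}), for which no contractibility or simple connectivity of the components of the links $\A_p(C_N(B))$ is claimed anywhere in the paper.

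The missing idea is a different (larger) choice of subgroup defining $\N$: the paper takes $H=\bigcap_i N_G(L_i)$, the kernel of the action of $G$ on the set of components, rather than $N$ itself. Two things then happen. First, $\A_p(H)$ is simply connected: for $A\in\A_p(H)$ one has $C_N(A)=\prod_i C_{L_i}(A)$ because $A$ normalizes each $L_i$, so $\A_p(C_N(A))\simeq\A_p(C_{L_1}(A))*\cdots*\A_p(C_{L_p}(A))$ is a join of $p\geq 3$ posets and in particular connected; Lemma \ref{lemmaConnectedLinks} applied to $N\normal H$ (with $\A_p(N)$ simply connected) then gives the claim, and $\N=\{X\in\A_p(G):X\cap H\neq 1\}\simeq\A_p(H)$ by Remark \ref{remarkInflationSubsapce}. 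Second --- and this is what kills your overlap problem --- any $X\in\A_p(G)$ with $X\cap H=1$ embeds in $G/H$, which acts faithfully on the $p$ components, so $X$ embeds in a Sylow $p$-subgroup of $S_p$ and $|X|=p$; hence $\S=\A_p(G)-\N$ \emph{is} an antichain, and Proposition \ref{propCollapsingSCSubspace} gives freeness at once, with no passage to $\A_p(G)^2$, no appeal to \cite[(10.3)]{Asc93}, and no bookkeeping of iterated gluings. So your instinct that the antichain property is the crux was correct, but as written the proposal does not prove freeness; replacing $N$ by $\bigcap_i N_G(L_i)$ is the step that completes it.
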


\begin{proof}
The hypotheses imply that $\A_p(F^*(G)) \simeq \A_p(L_1) * \ldots * \A_p(L_r)$ is simply connected by Proposition \ref{propQuillenJoin}. Then there exists some subgroup $T\leq G$ of order $p$ such that $\A_p(C_{F^*(G)}(T))$ is disconnected by Lemma \ref{lemmaConnectedLinks}. We apply \cite[(10.5)]{Asc93}. The hypotheses imply that we are in case (5) of \cite[(10.5)]{Asc93}. Then $T$ permutes regularly the components $\{L_1,\ldots, L_r\}$ and each $L_i$ has a strongly $p$-embedded subgroup. In particular, $r = p$ is odd and $L_i\simeq L_j$ for all $i,j$.

Set $N = F^*(G)$ and let $H = \bigcap_i N_G(L_i)$. Then $N\normal H$. If $A\in \A_p(H)$, then $A\leq \bigcap_i N_G(L_i)$, so $C_N(A) = \prod_i C_{L_i}(A)$. In particular, $$\A_p(C_N(A))  = \A_p \left(\prod_i C_{L_i}(A)\right) \simeq \A_p(C_{L_1}(A))* \A_p(C_{L_2}(A)) * \ldots * \A_p(C_{L_p}(A))$$ is simply connected. Therefore, by Lemma \ref{lemmaConnectedLinks}, $\A_p(H)$ is simply connected, hence $\mathcal{N} = \{X\in \A_p(G) : X\cap H \neq 1\}$ is also simply connected by Remark \ref{remarkInflationSubsapce}.  Consider the complement $\S = \A_p(G) - \mathcal{N}$. If $X\in \S$ then $X\cap H = 1$. Thus, $X = X_1X_2$ where $X_2$ permutes regularly the components $\{L_1,\ldots, L_p\}$ and $X_1\leq \bigcap_i N_G(L_i) = H$. Since $X\cap H = 1$, we conclude that $X_1 = 1$ and $|X_2| = p$, i.e. $|X| = p$. Therefore, $\S$ is an anti-chain and, by Proposition \ref{propCollapsingSCSubspace},   $\pi_1(\A_p(G))$ is free.
\end{proof}

\begin{proof}[Proof of Theorem \ref{mainTheorem}]
We may assume that $G = \Omega_1(G)$, $\A_p(G)$ is connected and $O_p(G) = 1$. By Corollary \ref{coroQuotientByOpprime}, we only need to prove that $\pi_1(\A_p(G))$ is free when $O_{p'}(G) = 1$ and $G$ is not almost simple. We may assume conditions (C1)...(C5), and the proof now follows from Remark \ref{remarksobreconditions} and Theorems \ref{theorem2Components} and \ref{theoremPComponents}.
\end{proof}

\section{Freeness in some almost simple cases}\label{sectionsix}

In this section we prove that $\pi_1(\A_p(G))$ is free when $G$ is an almost simple group with some extra hypothesis. We will use the structure of the outer automorphism group of a simple group. We refer the reader to sections 7 and 9 of \cite{GL83} and Chapters 2 to 5 of \cite{GLS98}. For the $p$-rank of simple groups we will use the results of section 10 of \cite{GL83} and in particular \cite[(10-6)]{GL83}.

Consider a finite group $G$ such that $L\leq G\leq \Aut(L)$, where $L$ is a simple group of order divisible by $p$. We may suppose that $G = \Omega_1(G)$, $m_p(G) \geq 3$ and $\A_p(G)$ is connected.

\begin{theorem}\label{theoremDiscOrSC}
Let $G$ and $L$ be as above. Then $\pi_1(\A_p(G))$ is free if $\A_p(L)$ is disconnected or simply connected.
\end{theorem}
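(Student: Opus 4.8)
The plan is to compare $\A_p(G)$ with $\A_p(L)$ through the subposet $\N=\{A\in\A_p(G):A\cap L\neq 1\}$, which strong deformation retracts onto $\A_p(L)$ by Remark~\ref{remarkInflationSubsapce} (taking $H=L$). Writing $\S=\A_p(G)-\N$, every $A\in\S$ satisfies $A\cap L=1$, so $A$ injects into $G/L\leq\Out(L)$ and, since $B\leq A$ forces $B\cap L\leq A\cap L=1$, the set $\S$ is a down-set of the poset (equivalently $\N$ is a filter). The first observation I would record is that in \emph{both} cases every connected component of $\A_p(L)\simeq\N$ is simply connected: this is immediate when $\A_p(L)$ is simply connected, and when $\A_p(L)$ is disconnected I would use the hypothesis $m_p(G)\geq 3$ to eliminate every entry of Theorem~\ref{disconnectedCasesTheorem} except the simple groups of Lie type and Lie rank $1$ in characteristic $p$ (for $A_{2p}$, $L_3(4)$, $M_{11}$, $Mc$, etc., the $p$-rank is at most $2$, and outer automorphisms do not raise it for odd $p$). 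For such $L$ the Sylow $p$-subgroups are $TI$, so each component of $\A_p(L)$ equals $\A_p(P)$ for some $P\in\Syl_p(L)$ and is contractible by Proposition~\ref{propOpGContractible}.

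For the disconnected case I would then exploit that, for $L$ of Lie rank $1$ in characteristic $p$, the $p$-part of $\Out(L)$ is cyclic (it is carried by the field automorphisms, the diagonal automorphisms forming a $p'$-group in the defining characteristic). Hence $m_p(G/L)\leq 1$ and $\S$ is an anti-chain consisting of subgroups of order $p$. I would decompose $\A_p(G)=\N\cup\bigcup_{A\in\S}\A_p(G)_{\geq A}$, where each $\A_p(G)_{\geq A}$ is a cone (hence contractible), $\N\cap\A_p(G)_{\geq A}\simeq\A_p(C_L(A))$ by Lemma~\ref{lemmaLinkRetract}, and $\A_p(G)_{\geq A}\cap\A_p(G)_{\geq B}\subseteq\N$ for distinct $A,B\in\S$ (otherwise $\langle A,B\rangle$ would have $p$-rank $2$ in $\Out(L)$). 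Gluing the contractible cones to the simply connected components of $\N$ by the non-connected van Kampen theorem, exactly as in the proof of Theorem~\ref{theorem2Components}, exhibits $\A_p(G)$ as a graph of simply connected spaces, so $\pi_1(\A_p(G))$ is free.

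For the simply connected case, $\N$ is simply connected. The clean sub-case is when $\A_p(C_L(T))$ is connected for every $T\leq G$ of order $p$: then Lemma~\ref{lemmaConnectedLinks} (with the normal subgroup $L$) gives that $\A_p(G)$ is itself simply connected. When some centralizer complex is disconnected, I would again aim to realize the complement as an anti-chain so that Proposition~\ref{propCollapsingSCSubspace} applies with $Y=\N$. If $m_p(G/L)\leq 1$ this is automatic. Otherwise the obstruction is the presence of outer elementary abelian $p$-subgroups of rank $\geq 2$ coming from diagonal automorphisms; to absorb these I would pass to the intermediate subgroup $H=G\cap\Inndiag(L)$, so that $G/H$ embeds in the group generated by field and graph automorphisms, whose Sylow $p$-subgroup is cyclic. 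Then $\{A\in\A_p(G):A\cap H=1\}$ is an anti-chain of order-$p$ subgroups, $\{A:A\cap H\neq 1\}\simeq\A_p(H)$ by Remark~\ref{remarkInflationSubsapce}, and Proposition~\ref{propCollapsingSCSubspace} yields freeness provided $\A_p(H)$ is simply connected.

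The hard part is precisely the last provision: showing that $\A_p(H)$ stays simply connected after adjoining the diagonal automorphisms. I would attack this with Lemma~\ref{lemmaConnectedLinks} applied to $L\normal H$, reducing it to the connectivity of the complexes $\A_p(C_L(T))$ for $T\leq H$ of order $p$, which in turn forces a case analysis based on the classification of finite simple groups together with the explicit description of $\Out(L)$ and of the centralizers of semisimple and diagonal automorphisms. This classification-dependent connectivity bookkeeping --- rather than the topology, which is packaged by Proposition~\ref{propCollapsingSCSubspace} and the van Kampen argument --- is the main obstacle; the disconnected case, by contrast, is routine once Theorem~\ref{disconnectedCasesTheorem} has cut the list of relevant $L$ down to the Lie rank $1$ groups.
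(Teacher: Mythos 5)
Your disconnected case is correct and is essentially the paper's own argument: the hypothesis $m_p(G)\geq 3$ eliminates every entry of Theorem \ref{disconnectedCasesTheorem} except the Lie type, Lie rank $1$, characteristic $p$ groups (the paper quotes \cite[(7-13), (10-6)]{GL83} for the rank bounds you assert), and then the decomposition $\A_p(G)=\N\cup\bigcup_{A\in\S}\A_p(G)_{\geq A}$, the TI property of Remark \ref{remarkBenderGroups}, the bound $m_p(\Out(L))\leq 1$, and the non-connected van Kampen theorem give freeness exactly as in the paper (which additionally records, via the TI property of $C_L(A)$, that each intersection $\A_p(C_L(A))$ also has simply connected components, making the gluing bookkeeping explicit).

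The simply connected case, however, has a genuine gap, and you flag it yourself: you never prove that $\A_p(H)$ is simply connected for $H=G\cap\Inndiag(L)$, you only say this ``forces a case analysis based on the classification.'' The missing ingredient is precisely \cite[(10.5)]{Asc93}, which is what the paper invokes at this point: it classifies the pairs $(L,p)$ admitting an automorphism $T$ of order $p$ with $\A_p(C_L(T))$ disconnected. The list is short: $L$ of Lie type and Lie rank $1$ in characteristic $p$; $p=2$ with $L\simeq L_3(q)$, $U_3(q)$, $Sp_4(q)$ ($q$ even), $L_3(q^2)$ ($q$ even), or $G_2(3)$; and $p>3$ with $L\simeq L^{\epsilon}_p(q)$, $q\equiv\epsilon\bmod p$. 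In all the characteristic-$p$, Lie rank $\leq 2$ entries, $\A_p(L)$ is disconnected or a wedge of spheres of dimension at most $1$ by Quillen's building theorem \cite[Theorem 3.1]{Qui78}, contradicting your standing hypothesis that $\A_p(L)$ is simply connected; in the surviving entries one has $\Out(G_2(3))=C_2$ and $m_p(\Out(L^{\epsilon}_p(q)))=1$ by \cite[(9-3)]{GL83}. So whenever a disconnected link exists at all, $m_p(G/L)\leq 1$ holds automatically, and your ``automatic'' sub-case (anti-chain complement plus Proposition \ref{propCollapsingSCSubspace} with $Y=\N$) finishes the proof. In particular the configuration you introduce $H=G\cap\Inndiag(L)$ to handle --- a disconnected link coexisting with outer $p$-rank $\geq 2$ --- never occurs: if $m_p(G/L)\geq 2$, then \cite[(10.5)]{Asc93} together with the hypothesis forces every link $\A_p(C_L(T))$ to be connected, and Lemma \ref{lemmaConnectedLinks} already yields simple connectivity of $\A_p(G)$. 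Without this classification input (or redoing it by hand, which you explicitly defer as ``the main obstacle''), your proof of the simply connected case is incomplete.
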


\begin{proof}
We prove first that $\pi_1(\A_p(G))$ is free when $\A_p(L)$ is disconnected. In this case, $L$ has a strongly $p$-embedded subgroup. We deal with each case of the list of Theorem \ref{disconnectedCasesTheorem}.

\begin{itemize}
\item If $m_p(L) = 1$, then $p$ is odd and $m_p(G) \leq 2$ by \cite[(7-13)]{GL83}.
\item If $L$ is a simple group of Lie type and Lie rank $1$ in characteristic $p$, then the Sylow $p$-subgroups of $L$ have the trivial intersection property, i.e. $P\cap P^g = 1$ if $P\in \Syl_p(L)$ and $g\in L-N_L(P)$ (see \cite[Theorem 7]{Sei82}). The proof is similar to the proofs of Theorems \ref{theorem2Components} and \ref{theoremPComponents}. Let $\N = \{X\in \A_p(G) : X\cap L \neq 1\}$ and let $\S = \A_p(G) - \N$. Since $m_p(\Out(L)) \leq 1$, $\S$ consists of subgroups of order $p$. By Remarks  \ref{remarkInflationSubsapce} and \ref{remarkBenderGroups}, $\N\simeq \A_p(L)$ has simply connected components. If $A\in \S$, then $\A_p(G)_{\geq A} \cap \N\simeq \A_p(C_L(A))$ by Lemma \ref{lemmaLinkRetract}, and the Sylow $p$-subgroups of $C_L(A)$ intersect trivially, so 
 $\A_p(G)_{\geq A} \cap \N$ has simply connected components. Then $\pi_1(\A_p(G))$ is free by van Kampen theorem.
\item $L\not\simeq {}^2G_2(3)$ or $\Aut(Sz(32))$ since these groups are not simple.
\item In the remaining cases, $m_p(G) = 2$ by \cite[(10-6)]{GL83} or by direct computation.
\end{itemize}

Now we prove that $\pi_1(\A_p(G))$ is free when $\A_p(L)$ is simply connected. Note that $m_p(L)\geq 3$ since otherwise $O_p(L)\neq 1$, contradicting that $L$ is simple. By Lemma \ref{lemmaConnectedLinks}, we may assume that $\A_p(C_L(T))$ is disconnected for some $T\leq G$ of order $p$. Therefore, we are dealing with one of the cases (1), (2), (3) or (4) of \cite[(10.5)]{Asc93}. We deal with each one of them.
\begin{enumerate}
\item If $L$ is of Lie type and Lie rank $1$ in characteristic $p$, then $\A_p(L)$ is disconnected, contradicting the hypothesis.
\item If $p = 2$, $q$ is even and $L\simeq L_3(q)$, $U_3(q)$ or $Sp_4(q)$, then $L$ is of Lie type and Lie rank at most $2$. In any case, $\A_p(L)$ is not simply connected since it has the homotopy type of a wedge of spheres of dimension equal to the Lie rank of $L$ minus $1$ (see \cite[Theorem 3.1]{Qui78}). If $L \simeq G_2(3)$, then $\Out(G_2(3)) = C_2$. Hence, $\pi_1(\A_p(G))$ is free by Proposition \ref{propCollapsingSCSubspace} applied to $Y = \{X\in \A_p(G):X\cap L\neq 1\}\subseteq \A_p(G)$. Note that $Y$ is simply connected since $Y\simeq \A_p(L)$ by Remark \ref{remarkInflationSubsapce}.
\item If $p = 2$ and $L\simeq L_3(q^2)$ with $q$ even, then $L$ has Lie rank $2$, and thus $\A_p(L)$ is a wedge of $1$-spheres, contradicting the hypothesis (see \cite[Theorem 3.1]{Qui78}).
\item If $p > 3$, $q\equiv \epsilon \mod p$ and $L\simeq L^{\epsilon}_p(q)$, then $m_p(\Out(L)) = 1$ by \cite[(9-3)]{GL83}. Therefore, $\pi_1(\A_p(G))$ is free by Proposition \ref{propCollapsingSCSubspace} applied to $Y = \{X\in \A_p(G):X\cap L\neq 1\}\subseteq \A_p(G)$.
\end{enumerate}
\end{proof}

\begin{corollary}\label{corollarylietype}
If $L$ is a Lie type group in characteristic $p$ and $p\nmid (G:L)$ when $L$ has Lie rank $2$, then $\pi_1(\A_p(G))$ is free.
\end{corollary}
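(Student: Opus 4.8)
The plan is to argue by cases on the Lie rank $r$ of $L$, using Quillen's computation \cite[Theorem 3.1]{Qui78} that, since $L$ is of Lie type in characteristic $p$, the complex $\A_p(L)$ is homotopy equivalent to the associated Tits building, and hence to a wedge of spheres of dimension $r-1$. For $r=1$ this wedge is $0$-dimensional, so $\A_p(L)$ is disconnected; for $r\geq 3$ the spheres have dimension at least $2$, so $\A_p(L)$ is simply connected. In both of these cases Theorem \ref{theoremDiscOrSC} applies directly and gives that $\pi_1(\A_p(G))$ is free, with no coprimality hypothesis needed. So the whole content of the extra assumption $p\nmid (G:L)$ is concentrated in the rank $2$ case.

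The remaining case $r=2$ is precisely the one left open by Theorem \ref{theoremDiscOrSC}: here $\A_p(L)$ is a connected wedge of $1$-spheres, so $\pi_1(\A_p(L))$ is free but the complex is neither disconnected nor simply connected. To handle it I would show that coprimality forces $\A_p(G)$ to agree, up to homotopy, with $\A_p(L)$. Following the same $\N/\S$ decomposition used in Theorems \ref{theorem2Components}, \ref{theoremPComponents} and \ref{theoremDiscOrSC}, set $\N = \{X\in \A_p(G): X\cap L\neq 1\}$ and $\S = \A_p(G) - \N$. If $X\in\S$ then $X\cap L = 1$, so the composite $X\hookrightarrow G\to G/L$ is injective and embeds the nontrivial $p$-group $X$ into $G/L\leq \Out(L)$; but $|G/L| = (G:L)$ is prime to $p$, a contradiction. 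Hence $\S=\emptyset$ and $\A_p(G)=\N$. By Remark \ref{remarkInflationSubsapce} applied to $H=L$, the inclusion $\A_p(L)\subseteq \N$ is a strong deformation retract, so $\A_p(G)=\N \simeq \A_p(L)$. Since $\A_p(L)$ is homotopy equivalent to a (connected) graph, $\pi_1(\A_p(G)) = \pi_1(\A_p(L))$ is free, which finishes this case.

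I expect the only genuinely substantive point to be the rank $2$ case, and within it the verification that $\S$ is empty; the ranks $1$ and $\geq 3$ are pure bookkeeping reductions to Theorem \ref{theoremDiscOrSC} via the building computation. The role of the hypothesis $p\nmid(G:L)$ is exactly to exclude the $p$-elements of $G$ lying outside $L$ (field or graph automorphisms of order $p$), which would otherwise contribute minimal vertices to $\A_p(G)$ not seen by $\A_p(L)$ and could obstruct freeness; ruling these out is what makes the clean identification $\A_p(G)\simeq \A_p(L)$ possible.
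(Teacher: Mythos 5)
Your proof is correct and follows essentially the same route as the paper: for Lie rank $\neq 2$ it reduces to Theorem \ref{theoremDiscOrSC} via Quillen's building theorem, and for rank $2$ the coprimality hypothesis identifies $\A_p(G)$ with $\A_p(L)$. The only cosmetic difference is that the paper notes the literal equality $\A_p(G)=\A_p(L)$ (when $p\nmid(G:L)$, every $p$-subgroup $X\leq G$ satisfies $X\leq L$, since $X/(X\cap L)$ embeds in $G/L$), whereas you stop at $X\cap L\neq 1$ and then invoke the retraction of Remark \ref{remarkInflationSubsapce} to get a homotopy equivalence --- same content.
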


\begin{proof}
Since $\A_p(L)$ is homotopy equivalent to the building associated to $L$ (see \cite[Theorem 3.1]{Qui78}), it is a bouquet of spheres of dimension $n-1$, where $n$ is the Lie rank of $L$. If $n \neq 2$, $L$ is in the hypotheses of Theorem \ref{theoremDiscOrSC}. If $n = 2$, $\A_p(G) = \A_p(L)$ since $p\nmid (G:L)$. In either case, $\pi_1(\A_p(G))$ is a free group.
\end{proof}

\begin{remark}
Corollary \ref{corollarylietype} does not give information in the case that $L$ has Lie rank $2$ and  $p\mid (G: L)$. By  \cite[Theorem 3.1]{Qui78}, $\pi_1(\A_p(L))$ is a free group but if $p\mid (G:L)$ then it may happen that $\pi_1(\A_p(G)) \not\simeq \pi_1(\A_p(L))$. For example, take $L = L_3(4)$ and $p = 2$. Note that $\Out(L) = D_{12}$.  We have computed $\pi_1(\A_p(G))$ for all possible groups $G$, with $L \leq G\leq \Aut(L)$, and they turned out to be free. If $G = \Aut(L)$, $\A_p(G)$ is simply connected, while $\pi_1(\A_p(L))$ is a non-trivial free group.
\end{remark}

Recall the classification of J. Walter of simple groups with abelian Sylow $2$-subgroup \cite{Wal69}.

\begin{theorem}[{\cite{Wal69}}]\label{theoremwalter}
Let $L$ be a a simple group with abelian Sylow $2$-subgroup $P$. Then $L$ is isomorphic to one of the following groups:
\begin{enumerate}
\item $L_2(q)$, $q\equiv 3,5\mod 8$ (and $P$ is elementary abelian of order $2^2$),
\item $L_2(2^n)$, $n\geq 2$ (and $P$ is elementary abelian of order $2^n$),
\item ${}^2G_2(3^n)$, $n$ odd (and $P$ is elementary abelian of order $2^3$),
\item $J_1$ (and $P$ is elementary abelian of order $2^3$).
\end{enumerate}
\end{theorem}

In the proof of the next result, we will work with Bouc poset $\B_p(G)$ of non-trivial $p$-radical subgroups instead of $\A_p(G)$ (see Section \ref{sectiontwo}).

\begin{theorem}
Suppose $G$ is almost simple, $p = 2$ and $F^*(G)$ has abelian Sylow $2$-subgroups. Then $\pi_1(\A_p(G))$ is free.
\end{theorem}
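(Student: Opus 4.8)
The plan is to run the standard reductions of this section, invoke Walter's classification (Theorem \ref{theoremwalter}) to split into four families, dispatch three of them quickly, and concentrate the real work on the Ree groups via the Bouc poset. First I would assume, as permitted at the start of the section, that $G=\Omega_1(G)$, that $\A_2(G)$ is connected, and that $m_2(G)\geq 3$; the last point is harmless, because if $m_2(G)\leq 2$ then $\K(\A_2(G))$ is one-dimensional and $\pi_1(\A_2(G))$ is automatically free. Since $L=F^*(G)$ is simple with abelian Sylow $2$-subgroup, Theorem \ref{theoremwalter} forces $L$ to be $L_2(2^n)$, $L_2(q)$ with $q\equiv 3,5\bmod 8$, ${}^2G_2(3^n)$ ($n$ odd, $n\geq 3$), or $J_1$.

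Two families drop out at once. For $L_2(2^n)$ the group is of Lie type and Lie rank $1$ in characteristic $2$, so $\A_2(L)$ is disconnected (Theorem \ref{disconnectedCasesTheorem}) and Theorem \ref{theoremDiscOrSC} applies for every $G$ with $L\leq G\leq\Aut(L)$. For $L_2(q)$ with $q\equiv 3,5\bmod 8$ the residue forces $q$ to be a non-square, so $\Aut(L)=\mathrm{PGL}_2(q)\rtimes C_f$ with $f$ odd; hence a Sylow $2$-subgroup of any such $G$ is that of $\mathrm{PGL}_2(q)$, dihedral of order $8$, giving $m_2(G)=2$. Thus this family never reaches the case $m_2(G)\geq 3$ and is covered by the reduction above.

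This leaves $J_1$ and ${}^2G_2(3^n)$, both with elementary abelian Sylow $2$-subgroup $S\simeq 2^3$ and trivial or odd-order outer automorphism group, so that $G=L$. For these I would replace $\A_2(L)$ by the homotopy equivalent Bouc poset $\B_2(L)$ of radical subgroups, using the abelian Sylow hypothesis to list them: $S$ is radical; each $\gen{t}$ of order $2$ is radical because $N_L(\gen{t})=C_L(t)=\gen{t}\times L_2(q)$ has $O_2=\gen{t}$ (as $O_2(L_2(q))=1$); and the status of the order-$4$ subgroups is decided by $O_2(N_L(V_4))$. For $J_1$ one computes $C_L(V_4)=\gen{t}\times C_{A_5}(s)=S$, so $S\leq O_2(N_L(V_4))$ and no $V_4$ is radical; therefore $\B_2(J_1)$ has elements only of orders $2$ and $8$, its order complex is a bipartite graph, and $\pi_1(\A_2(J_1))=\pi_1(\B_2(J_1))$ is free.

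The genuinely hard case, and the one I expect to absorb most of the effort, is ${}^2G_2(q)$ with $q=3^n$. Here the same computation goes the other way: $C_L(V_4)=\gen{t}\times D_{q+1}$ has $O_2=V_4$, so every $V_4$ is radical and $\K(\B_2(L))$ is genuinely two-dimensional, with maximal chains $\gen{t}<V_4<S$. The quick graph argument therefore fails. My plan is to cover $\B_2(L)$ by the subposets $\B_2(L)_{\leq S}=\{R\in\B_2(L):R\leq S\}$ as $S$ ranges over the Sylow $2$-subgroups; each such piece has a maximum and is hence contractible, and their intersections are controlled by the intersections $S\cap S'$ of Sylow subgroups. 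Running a van Kampen argument over this cover (or, after checking that the relevant links $\A_2(L)_{>\gen{t}}\cong\A_2(L_2(q))$ and the pairwise intersections are connected, a local-to-global argument) should reduce $\pi_1(\B_2(L))$ to the fundamental group of the nerve of the cover and exhibit it as free; alternatively one may try to collapse $\B_2(L)$ onto a graph. The main obstacle is precisely this step: unlike $J_1$, the Ree groups carry radical subgroups at the middle level, so freeness cannot be read off from the dimension and must be extracted from a careful analysis of how the Sylow subgroups overlap.
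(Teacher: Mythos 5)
Your first three cases are fine: $L_2(2^n)$ and $L_2(q)$ with $q\equiv 3,5\pmod 8$ are handled essentially as in the paper (the paper cites \cite[Theorem 4.10.5(b)]{GLS98} for $m_2(G)=2$ where you argue directly with the dihedral Sylow $2$-subgroup of $\mathrm{PGL}_2(q)$; both work), and your $J_1$ argument is correct and in fact improves on the paper, which settles $J_1$ by a computer calculation ($\pi_1$ free of rank $4808$): your observation that $C_{J_1}(V)\cong C_2^3$ is a full Sylow $2$-subgroup, normal in $N_{J_1}(V)$ because centralizers are normal in normalizers, so that no four-group is radical and $\K(\B_2(J_1))$ is a graph, is a clean structural replacement. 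The genuine gap is $L={}^2G_2(3^n)$: what you offer there is a plan, not a proof. Covering $\B_2(L)$ by the contractible cones $\B_2(L)_{\leq S}$ and ``running van Kampen over the cover'' does not by itself give freeness --- a $2$-dimensional complex covered by contractible subcomplexes can perfectly well have non-free (even finite) fundamental group --- so everything hinges on the unverified claims about how the pieces intersect (connectivity and simple connectivity of pairwise and triple intersections, or an explicit collapse onto a graph), and none of that analysis is carried out. As written, the theorem remains unproved for the Ree groups.

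What makes this gap more serious than it looks is that your $2$-local computation is correct and contradicts the paper's own treatment of this case --- and it is the paper that is in error. The paper quotes $N_L(X)=(C_2^2\times D_{(q+1)/2})\rtimes C_3$ for a four-subgroup $X$ and asserts $X<O_2(N_L(X))\simeq C_2^3$ ``since $q\equiv 3 \pmod 4$'', concluding that $\K(\B_2(L))$ is $1$-dimensional. But $q=3^n$ with $n$ odd gives $q\equiv 3\pmod 8$, so $(q+1)/2\equiv 2\pmod 4$: the factor $D_{(q+1)/2}$ is dihedral of twice odd order, with trivial center and trivial $O_2$. Since $N_L(X)/C_L(X)\simeq C_3$ has odd order, $O_2(N_L(X))\leq C_L(X)=X\times D_{(q+1)/2}$, hence $O_2(N_L(X))\leq X\times O_2\bigl(D_{(q+1)/2}\bigr)=X$; that is, every four-subgroup of ${}^2G_2(q)$ is $2$-radical, exactly as you claim. (Sanity check: $O_2(N_L(X))\simeq C_2^3$ would force a Sylow $2$-subgroup to be normal in $N_L(X)$, yet $C_L(X)$ alone already contains $(q+1)/4\geq 7$ Sylow $2$-subgroups of $L$, namely $X\times\gen{r}$ for the reflections $r\in D_{(q+1)/2}$.) So $\K(\B_2({}^2G_2(q)))$ really is $2$-dimensional, the paper's dimension argument collapses, and the freeness of $\pi_1(\A_2({}^2G_2(3^n)))$ requires a genuine argument of the kind you sketch; at present neither your proposal nor the paper supplies one.
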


\begin{proof}
Let $L = F^* (G)$. By Walter's Theorem \ref{theoremwalter}, $L$ is one of the groups (1)...(4). 

The case (2) follows from the disconnected case of Theorem \ref{theoremDiscOrSC}. 

In case (4),  $G = J_1$ and $\pi_1(\A_2(G))$ is free on $4808$ generators by computer calculation.

In case (3), $L = {}^2G_2(3^n)$, with $n$ odd. Then $\Out(L) = C_n$ has odd order and $\A_2(G) = \A_2(L)$. It suffices to prove that $\K(\B_p(L))$ has dimension $1$. Note that $\S_2(L) = \A_2(L)$. Let $q=3^n$. By \cite[Theorem 6.5.5]{GLS98}, the normalizers of the non-trivial $2$-subgroups have the following forms: $C_2\times L_2(q)$ for involutions and $(C_2^2\times D_{(q+1)/2})\rtimes C_3$ for four-subgroups. If $t\in L$ is an involution, $O_2(N_L(t)) = \gen{t}$. If $X$ is a four-subgroup of $L$ then $X < O_2(N_L(X)) \simeq C_2^3$ since $q\equiv 3 \mod 4$. For a Sylow $2$-subgroup $S$ of $L$ we have that $S = O_2(N_L(S))$. Therefore, the poset $\B_2(L)$ contains the subgroups generated by one involution and the Sylow $2$-subgroups. In consequence, $\K(\B_2(L))$ is a $1$-dimensional simplicial complex (and therefore it has free fundamental group).

In case (1), $L= L_2(q)$, $q\equiv 3,5\mod 8$ then $q$ is odd and it is not a square. Therefore, $\Aut(L) / \Inndiag(L)$ has odd order and thus we may assume $L \leq G \leq \Inndiag(L)$. In any case, $m_2(G) = 2$ by \cite[Theorem 4.10.5(b)]{GLS98}.
\end{proof}

Now we compute the fundamental group of $\A_p(G)$ for some particular sporadic groups $L$. Note that $m_p(L)\leq 2$ if $p > 7$.

\begin{example}
By computer calculations, $\pi_1(\A_2(G))$ is free for $L = J_1$ or $J_2$. Note that $\Out(J_1) = 1$ and $\Out(J_2) = C_2$. Note that, if $p$ is odd, $m_p(G)\leq 2$ for $L = J_1$ or $J_2$ (see \cite[(10-6)]{GL83}).
\end{example}

\begin{example}
If $G = J_3$ or $O' N$ and $p = 3$, then $\pi_1(\A_p(G))$ is free. By \cite[Proposition 3.1.4]{Kot97} and \cite[Section 6.1]{UY02}, there are only two conjugacy classes of non-trivial $p$-radical subgroups of $G$. Therefore, $\K(\B_p(G))$ has dimension $1$. For $p > 3$, $m_p(G) \leq 2$, so $\pi_1(\A_p(G))$ is free.
\end{example}

\begin{example}
If $G = Mc$ and $p = 3$, then $\pi_1(\A_p(G))$ is free. By computer calculations, if $S$ is a Sylow $3$-subgroup of $G$, there exist three subgroups of $S$ (up to conjugacy) which are $p$-radical subgroups of $G$: $A$, $B$ and $S$. Their orders are $|A| = 81$, $|B| = 243$, $|S| = 729$. Moreover, $A,B\normal S$ and $A\nleq B$. Then $A^g\nleq B$ for any $g\in G$ such that $A^g\leq S$, and therefore $\K(\B_p(G))$ is $1$-dimensional. For $p > 3$, $m_p(G) \leq 2$, so $\pi_1(\A_p(G))$ is free.
\end{example}

\begin{proposition}
Assume $L$ is a Mathieu sporadic group. If $p$ is odd and $m_p(G) \geq 2$, then $\A_p(G)$ has free fundamental group. If $p = 2$, $\A_2(G)$ is simply connected except for $L = M_{11}$, in which case $\pi_1(\A_2(G))$ is a non-trivial free group.
\end{proposition}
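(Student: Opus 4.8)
The plan is to separate the odd primes from $p=2$, since the two regimes are governed by completely different phenomena for the Mathieu groups.

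For odd primes the whole statement reduces to a $p$-rank computation. The odd primes dividing a Mathieu group lie in $\{3,5,7,11,23\}$, and I would read off from the $p$-rank tables for sporadic groups (\cite[(10-6)]{GL83}, or the Atlas) that $m_p(L)\leq 2$ for every Mathieu group $L$ and every odd $p$: for $p\geq 5$ one even has $m_p(L)=1$ (these primes divide $|L|$ only to the first power), while for $p=3$ the Sylow $3$-subgroup has order at most $27$ and is either elementary abelian of rank $2$ (for $M_{11},M_{22},M_{23}$) or extraspecial of exponent $3$ (for $M_{12},M_{24}$), so $m_3(L)=2$ throughout. Since $\Out(L)$ is trivial or a $2$-group for each Mathieu group, passing to $G$ with $L\leq G\leq\Aut(L)$ does not change the $p$-rank for odd $p$, whence $m_p(G)=m_p(L)\leq 2$. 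Together with the hypothesis $m_p(G)\geq 2$ this forces $m_p(G)=2$, so every chain in $\A_p(G)$ has length at most $1$ and $\K(\A_p(G))$ is a graph; a graph has free fundamental group on each component, and by Remark \ref{remarkConnectedComponents} this is what $\pi_1(\A_p(G))$ means. This settles the odd case completely.

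The prime $p=2$ is the substantial part, and here I first isolate $M_{11}$. Its Sylow $2$-subgroup is semidihedral of order $16$, so $m_2(M_{11})=2$ and $\A_2(M_{11})$ is again a graph; it is connected because $M_{11}$ is not among the groups with a strongly $2$-embedded subgroup in Theorem \ref{disconnectedCasesTheorem}. To show $\pi_1$ is \emph{non-trivial} I would compute the Euler characteristic $\chi(\A_2(M_{11}))=V-E$ (with $V$ the number of subgroups of order $2$ and $E$ the number of incidences $C_2<C_2^2$) and check that $\chi<1$, so the graph is not a tree; this is immediate with the GAP routines of the Appendix (see \cite{Gap}). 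Thus $\pi_1(\A_2(M_{11}))$ is a non-trivial free group. For the four groups $L\in\{M_{12},M_{22},M_{23},M_{24}\}$ at $p=2$ one has $m_2(L)\geq 3$ and the target is the stronger assertion that $\A_2(G)$ is \emph{simply connected} --- note that Theorem \ref{theoremDiscOrSC} would only give freeness, so it does not suffice here. I would handle the extension $\Out(L)=C_2$ (occurring only for $M_{12}$ and $M_{22}$) by Lemma \ref{lemmaConnectedLinks} applied to $N=L\normal G$: granting that $\A_2(L)$ is simply connected, it remains to check that $\A_2(C_L(T))$ is connected for every $T\leq G$ of order $2$, including the centralizers of outer involutions. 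These involution centralizers are explicitly known; each has $2$-rank at least $2$ and no strongly $2$-embedded subgroup, so its Quillen complex is connected, and Lemma \ref{lemmaConnectedLinks} then yields simple connectivity of $\A_2(L.2)$. This reduces everything to the base case of the four simple groups $L$.

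The heart of the argument --- and the main obstacle --- is therefore the direct simple connectivity of the complexes $\A_2(L)$ for $L$ simple, where Lemma \ref{lemmaConnectedLinks} is unavailable because $L$ has no proper non-trivial normal subgroup. My plan is to replace $\A_2(L)$ by the homotopy-equivalent Bouc poset $\B_2(L)$ of radical $2$-subgroups, which has only a small number of conjugacy classes, and to verify $\pi_1(\B_2(L))=1$ by a direct computation with the Appendix functions, exactly as was done for $A_{10}$ and for $J_1,J_2$. The delicate point is $M_{24}$: its order makes a naive enumeration of $2$-subgroups infeasible, so the computation must genuinely be organized around the (far smaller) radical poset, and one needs GAP to list the radical $2$-subgroups up to conjugacy inside a permutation representation on $24$ points and assemble their order complex. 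Alternatively, the simple connectivity of $\A_2(M_{24})$ (and of $M_{22},M_{23}$) can be imported from the known simple connectivity of the associated $2$-local geometries, sidestepping the largest computation; either route, plus the correct bookkeeping of the outer automorphisms of $M_{12}$ and $M_{22}$, are the steps requiring care.
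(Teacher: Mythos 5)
Your handling of the odd primes, of $M_{11}$, and of the extensions $\Aut(M_{12})$ and $\Aut(M_{22})$ is essentially correct and close in spirit to the paper's proof: the paper likewise disposes of odd $p$ via $m_p(L)\leq 2$, records $m_2(M_{11})=2$, computes $\A_2(M_{12})$ and $\A_2(M_{22})$ with GAP, and treats the outer involutions through their centralizers in $L$. For that last step the paper does not invoke Lemma \ref{lemmaConnectedLinks} but runs a direct van Kampen argument on $\N\cup\A_2(G)_{\geq A}$ for $A$ generated by an outer involution, using \cite[Tables 5.3b,c]{GLS98} to see that these centralizers have a non-trivial normal $2$-subgroup (so their Quillen complexes are contractible, not merely connected); your route through Lemma \ref{lemmaConnectedLinks} is an equivalent packaging of the same input. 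Your use of \cite[p.295]{Smi11} for $M_{24}$ is exactly what the paper does.

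The genuine gap is $M_{23}$, together with the fallback you propose for the larger groups. It is not true that simple connectivity of $\A_2(M_{22})$ or $\A_2(M_{23})$ can be ``imported from the known simple connectivity of the associated $2$-local geometries'': the Ronan--Smith $2$-local geometry of $M_{22}$ is famously \emph{not} simply connected (it admits a triple cover on which $3\cdot M_{22}$ acts), the corresponding statement for $M_{23}$ is likewise unavailable, and in any case one would additionally need a homotopy equivalence between the geometry and the Quillen complex, which \cite{Smi11} provides for $M_{24}$ but not for $M_{22}$ or $M_{23}$. This is precisely why the paper uses the geometry only for $M_{24}$. For $M_{23}$ --- the case whose direct computation you yourself flag as problematic --- the paper supplies the idea your proposal is missing: since $M_{22}$ is a maximal subgroup of odd index $23$, every elementary abelian $2$-subgroup of $M_{23}$ lies in a conjugate of $M_{22}$, so the subcomplexes $\A_2(M_{22})\cup\A_2(M_{22}^g)$, $g\in M_{23}$, cover $\A_2(M_{23})$; each such union is simply connected by van Kampen because $\A_2(M_{22})$ is simply connected and the pairwise intersection of distinct conjugates is $L_3(4)$, whose Quillen complex is connected, and the intersections of elements of this cover (which involve triple and quadruple intersections of conjugates of $M_{22}$) are also connected. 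Van Kampen applied to this cover then yields simple connectivity of $\A_2(M_{23})$. Without this argument, or a genuinely completed computation for $M_{23}$, your proof of the $p=2$ assertion is incomplete.
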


\begin{proof}
Let $L$ be a one of the Mathieu groups $M_{11}$, $M_{12}$, $M_{22}$, $M_{23}$ or $M_{24}$. In all cases, $m_p(L)\leq 2$ if $p$ is odd. Therefore, we may assume that $p = 2$. Recall that $\Aut(L) = L$ for $L = M_{11}$, $M_{23}$ and $M_{24}$, and $\Out(L) = C_2$ for $L = M_{12}$ and $M_{22}$. 

Note that $m_2(M_{11}) = 2$. For $L = M_{12}$ or $M_{22}$ we checked with GAP that $\A_2(L)$ is simply connected.

If $G = \Aut(M_{22})$, then $\S=\{X\in \A_p(G) : X \cap M_{22} = 1\}\subseteq \Min(\A_p(G))$. Let $\N = \A_p(G) - \S$. Recall that $\N\simeq \A_2(M_{22})$ by Remark \ref{remarkInflationSubsapce} and therefore it is simply connected. Any $A\in \S$ is generated by an involution acting by outer automorphism on $M_{22}$. By \cite[Table 5.3c]{GLS98}, its centralizer in $M_{22}$ has a non-trivial normal $2$-subgroup. That is, $\A_2(C_{M_{22}}(A))$ is contractible by Proposition \ref{propOpGContractible}. Then for any $A\in\S$, $\N \cup \A_2(G)_{\geq A}$ is simply connected by van Kampen theorem and, recursively, $\A_2(G)$ is simply connected. A similar reasoning shows that $\A_2(G)$ is simply connected for $G = \Aut(M_{12})$ (see \cite[Table 5.3b]{GLS98}).

By \cite[p.295]{Smi11}  $\A_2(M_{24})$ is homotopy equivalent to its $2$-local geometry, which is simply connected. 

It remains to determine the fundamental group of $\A_2(M_{23})$. For this we use the classification of the maximal subgroups of $M_{23}$ and $M_{22}$. First note that $M_{22}$ is a maximal subgroup of $M_{23}$ of odd index $23$. In particular, any elementary abelian $2$-subgroup of $M_{23}$ is contained in some conjugate of $M_{22}$. Therefore, $\U=\{ \A_2(M_{22}) \cup \A_2(M_{22}^g) : g\in M_{23}\}$ is a cover of $\A_2(M_{23})$ by subcomplexes. We have computed the intersections between different conjugates of $M_{22}$ with GAP. All the intersections $M_{22}\cap M_{22}^g$, with $g\in M_{23} - M_{22}$, form a subgroup of $M_{22}$ of order $20160$. All the maximal subgroups of $M_{22}$ have order less than $20160$ except for the maximal subgroup $L_3(4)$ (and all its conjugates) which have order exactly $20160$. Thus, $M_{22}\cap M_{22}^g = L_3(4)$ and, by van Kampen theorem, each element of $\U$ is simply connected. The triple intersections of different conjugates of $M_{22}$ are all isomorphic to $C_2^2\rtimes A_8$, and the quadruple intersections of different conjugates of $M_{22}$ are all isomorphic to $C_2^4\rtimes C_3$. This shows that double and triple intersections of elements of $\U$ are connected. In consequence, by van Kampen theorem, $\A_2(M_{23})$ is simply connected.
\end{proof}

We investigate now the fundamental group of the Quillen complex of alternating groups at $p=2$. We use Ksontini's results on $\pi_1(\A_2(S_n))$ (see Section \ref{sectiontwo}).

\begin{remark}
By Theorem \ref{theoremDiscOrSC} the poset $\A_2(A_n)$ is disconnected if and only if $n = 5$,  since, for $p = 2$, the unique isomorphism of an alternating group with a group of the list is $A_5 \simeq L_2(4)$ (see \cite[(3-3)]{GL83}).
\end{remark}

\begin{proposition}\label{propAnCase}
Let $n\geq 4$. The fundamental group of $\A_2(A_n)$ is simply connected for $n=4$ and $n\geq 8$. For $n=5$, each component of $\A_2(A_5)$ is simply connected, for $n=6$ it is free of rank $16$, and for $n=7$ it is free of rank $176$. 
\end{proposition}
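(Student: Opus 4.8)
The plan is to treat the small values $n=4,5,6,7$ individually and then prove simple connectivity for $n\geq 8$, splitting the latter into the exceptional value $n=8$ and the generic range $n\geq 9$.

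For $n=4$ the Klein four-group is normal in $A_4$, so $O_2(A_4)\neq 1$ and $\A_2(A_4)$ is contractible by Proposition \ref{propOpGContractible}. For $n=5$ I would use the isomorphism $A_5\simeq L_2(4)$: this is a group of Lie type and Lie rank $1$ in characteristic $2$, so $\A_2(A_5)$ is disconnected and, by Remark \ref{remarkBenderGroups}, each connected component equals $\A_2(P)$ for a Sylow $2$-subgroup $P$ and is therefore contractible. For $n=6$ and $n=7$ I would first observe that $m_2(A_n)=2$: a rank-$3$ elementary abelian $2$-subgroup of $A_n$ with $n\leq 7$ would act on $n$ points with every nonidentity element a double transposition (moving exactly four points), and summing fixed points over the group contradicts $|\mathrm{fix}|$ being a multiple of $|C_2^3|$. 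Hence $\A_2(A_n)$ is a graph, so $\pi_1$ is automatically free in accordance with Theorem \ref{theoremAlmostSimpleFree}(1), and the exact ranks $16$ and $176$ are obtained by the computer count of vertices and edges.

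For the generic range $n\geq 9$ the idea is to compare $\A_2(A_n)$ with $\A_2(S_n)$ and invoke Ksontini's Theorem \ref{theoremKsontini}. By Remark \ref{remarkInflationSubsapce}, the subposet $\N=\{X\in\A_2(S_n):X\cap A_n\neq 1\}$ deformation retracts onto $\A_2(A_n)$, while its complement $\S=\A_2(S_n)-\N$ consists of the rank-$1$ subgroups $\gen{g}$ with $g$ an odd involution (since $X\cap A_n=1$ forces $X\hookrightarrow S_n/A_n=C_2$). Each such $\gen{g}$ is a minimal vertex whose closed star is a cone meeting $\N$ exactly in $\A_2(S_n)_{>\gen{g}}\cap\N\simeq\A_2(C_{A_n}(g))$ by Lemma \ref{lemmaLinkRetract}, and distinct stars are incomparable and so meet only inside $\N$. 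Thus $\A_2(S_n)$ is obtained from $\N\simeq\A_2(A_n)$ by coning off the spaces $\A_2(C_{A_n}(g))$, and the crux is that each of these is simply connected. If $g$ is a single transposition then $C_{A_n}(g)\simeq S_{n-2}$, so $\A_2(C_{A_n}(g))\simeq\A_2(S_{n-2})$ is simply connected because $n-2\geq 7$ (Theorem \ref{theoremKsontini}); if $g$ is a product of $m\geq 3$ transpositions then the base group $C_2^{m}$ of the centralizing wreath product meets $A_n$ in a nontrivial normal $2$-subgroup $C_2^{m-1}$, whence $O_2(C_{A_n}(g))\neq 1$ and $\A_2(C_{A_n}(g))$ is contractible by Proposition \ref{propOpGContractible}. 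Coning off simply connected subspaces leaves the fundamental group unchanged (van Kampen), so $\pi_1(\A_2(A_n))=\pi_1(\N)=\pi_1(\A_2(S_n))=1$, the last equality again by Theorem \ref{theoremKsontini} since $n\geq 7$.

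The value $n=8$ is precisely where this comparison breaks down, since the single-transposition centralizer yields $\A_2(S_6)$, which is \emph{not} simply connected; this is the main obstacle. I would resolve it using the exceptional isomorphism $A_8\simeq L_4(2)$, a group of Lie type in characteristic $2$ of Lie rank $3$: as in the proof of Corollary \ref{corollarylietype} (via \cite[Theorem 3.1]{Qui78}), $\A_2(A_8)$ is homotopy equivalent to the building of $L_4(2)$, a wedge of $2$-spheres, hence simply connected. The delicate points of the argument are thus the orbit-counting determination of $m_2(A_n)$ for $n\leq 7$, the precise identification of the odd-involution centralizers in $A_n$ together with the resulting dichotomy between $\A_2(S_{n-2})$ and the contractible case, and the recognition that $n=8$ must be handled through the sporadic isomorphism with $L_4(2)$ rather than by the uniform comparison with $S_n$.
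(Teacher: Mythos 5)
Your proposal is correct, and on the decisive range $n\geq 9$ it coincides with the paper's own argument: both cover $\A_2(S_n)$ by the subposet $\N$ of subgroups meeting $A_n$ nontrivially (which retracts to $\A_2(A_n)$ by Remark \ref{remarkInflationSubsapce}) together with the stars of the odd involutions, identify each intersection $\N\cap\A_2(S_n)_{\geq\gen{g}}$ with $\A_2(C_{A_n}(g))$ via Lemma \ref{lemmaLinkRetract}, split into the case of a single transposition (giving $\A_2(S_{n-2})$, simply connected by Theorem \ref{theoremKsontini} since $n-2\geq 7$) versus $m\geq 3$ transpositions (where the even part of the base group of the centralizing wreath product gives $O_2(C_{A_n}(g))\neq 1$, hence contractibility by Proposition \ref{propOpGContractible}), and then transfer Ksontini's simple connectivity of $\A_2(S_n)$ to $\N$ by van Kampen; the cases $n=4,5$ are also handled identically. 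Where you genuinely diverge is in the small cases: the paper disposes of $n=6,7,8$ entirely by GAP computation, whereas you prove $m_2(A_n)=2$ for $n\leq 7$ by a clean Burnside count (every involution there is a double transposition, so the fixed-point sum is $8n-28$, which is not divisible by $8$), making $\K(\A_2(A_n))$ a graph so that freeness is automatic and the computer is needed only for the ranks $16$ and $176$; and for $n=8$ you replace computation altogether by the exceptional isomorphism $A_8\simeq L_4(2)$, so that $\A_2(A_8)$ is homotopy equivalent to the rank-$3$ building, a wedge of $2$-spheres, by \cite[Theorem 3.1]{Qui78}. Your route buys a conceptual explanation of precisely the case where the comparison with $S_n$ breaks down (the transposition centralizer yields $\A_2(S_6)$, which is not simply connected), while the paper's route requires no exceptional isomorphisms and follows uniformly from the GAP code in the Appendix; both are complete proofs.
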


\begin{proof}
If $n = 4$, then $O_2(A_4)\neq 1$ and $\A_2(A_4)$ is contractible. If $n = 5$, $A_5 = L_2(4)$, which has trivial intersections of Sylow $2$-subgroups by \cite[Theorem 7]{Sei82}. Therefore, its connected components are simply connected. The cases $n = 6, 7, 8$ can be obtained directly by computer calculations.

We prove the case $n\geq 9$. We proceed similarly as before. Take $\N = \{A\in \A_2(S_n) : A\cap A_n\neq 1\}$. By Remark \ref{remarkInflationSubsapce}, $\N\simeq \A_2(A_n)$, and let $\S = \A_2(S_n) - \N$ be its complement in $\A_2(S_n)$. Note that $\S$ consists of the subgroups of order $2$ of $S_n$ generated by involutions which can be written with an odd number of disjoint transpositions. Note that for any $A\neq B\in\S$, $\A_2(S_n)_{\geq A} \cap \A_2(S_n)_{\geq B}\subseteq\N$. By Ksontini's Theorem \ref{theoremKsontini}, 
 $\pi_1(\A_2(S_n))=1$. Therefore, by van Kampen theorem, in order to prove that $\pi_1(\A_2(A_n))=\pi_1(\N)$ is trivial, we only need to show that the intersections
 $\N\cap \A_2(S_n)_{\geq A} \simeq \A_2(C_{A_n}(A))$ are simply connected for all $A\in\S$.
 
  We appeal now to the characterization of the centralizers of involutions in $A_n$ to show that $\A_2(C_{A_n}(x))$ is simply connected if $\gen{x}\in \S$. Let $x\in S_n - A_n$ be an involution acting as the product of $r$ disjoint transpositions and with $s$ fixed points.  By \cite[Proposition 5.2.8]{GLS98}, $C_{A_n}(x) \simeq (H_1\times H_2)\gen{t}$, where $H_1 \leq \ZZ_2\wr S_r$ has index $2$, and $H_2 \simeq A_s$. Here, the wreath product is taken with respect to the natural permutation of $S_r$ on the set $\{1,\ldots,r\}$. Moreover, $H_1 = E \rtimes S_r$ where $E\leq \ZZ_2^r$ is the subgroup $\{(a_1,\ldots, a_r)\in \ZZ_2^r : \sum_i a_i = 0\}$. If $s\leq 1$ then $t = 1$. If $s\geq 2$, then $t\neq 1$, $H_1\gen{t} = \ZZ_2\wr S_r$ and $H_2\gen{t} = S_s$. In any case, $E\normal (H_1\times H_2)\gen{t}$ since $[H_1,H_2] = 1$ and $E\normal H_1\gen{t}$. Therefore, if $r > 1$, $O_2(C_{A_n}(x))\neq 1$ and $\A_2(C_{A_n}(x))$ is contractible. In particular it is simply connected. If $r = 1$, $C_{A_n}(x) \simeq H_2\gen{t}\simeq S_s$, and therefore $\A_2(C_{A_n}(x))\simeq \A_2(S_s)$ is simply connected by Theorem \ref{theoremKsontini} (note that $s\geq 7$ since $2r + s = n\geq 9$).
\end{proof}

Combining Proposition \ref{propAnCase} with Theorem \ref{theoremKsontini} we deduce the following corollary.

\begin{corollary}
If $A_n\leq G\leq \Aut(A_n)$, then $\pi_1(\A_2(G))$ is a free group.
\end{corollary}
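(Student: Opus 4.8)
The plan is to split on whether $n=6$, since $\Aut(A_n)=S_n$ for every $n\neq 6$ whereas $\Out(A_6)\cong C_2\times C_2$. Assume $n\geq 5$, so that $A_n$ is simple (the cases $n\leq 4$ are immediate, as $\A_2(A_4)$ is contractible and $\A_2(S_4)$ is simply connected by Theorem \ref{theoremKsontini}). For $n\neq 6$ the only groups with $A_n\leq G\leq\Aut(A_n)$ are $G=A_n$ and $G=S_n$. The first is covered by Proposition \ref{propAnCase}, which gives that $\pi_1(\A_2(A_n))$ is free (trivial for $n=4$ and $n\geq 8$, trivial on each component for $n=5$, and free of ranks $16$ and $176$ for $n=6,7$); the second is covered by Ksontini's Theorem \ref{theoremKsontini}, which gives that $\pi_1(\A_2(S_n))$ is trivial for $n\geq 7$ and free by direct computation otherwise. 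This settles every $n\neq 6$.

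For $n=6$, besides $A_6$ and $S_6$ the groups $G$ with $A_6\leq G\leq\Aut(A_6)$ are $PGL_2(9)$, $M_{10}$ and $\Aut(A_6)=P\Gamma L_2(9)$. I would treat these by the subposet decomposition used in Theorems \ref{theorem2Components} and \ref{theoremPComponents}. Fix an index-$2$ normal subgroup $H\normal G$ for which $\pi_1(\A_2(H))$ is already known to be free, put $\N=\{X\in\A_2(G):X\cap H\neq 1\}$ and $\S=\A_2(G)-\N$. By Remark \ref{remarkInflationSubsapce}, $\N\simeq\A_2(H)$; and since $[G:H]=2$, every $X\in\S$ injects into $G/H\cong C_2$, so $\S$ is an anti-chain of subgroups of order $2$ and $\A_2(G)_{\geq A}\cap\A_2(G)_{\geq B}\subseteq\N$ for distinct $A,B\in\S$. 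Each $\A_2(G)_{\geq A}$ is contractible, and by Lemma \ref{lemmaLinkRetract} the intersection $\N\cap\A_2(G)_{\geq A}\simeq\A_2(C_H(t))$, where $t$ is the involution generating $A$. If every connected component of $\A_2(C_H(t))$ is simply connected, then the non-connected van Kampen theorem adds only free factors when attaching the cones $\A_2(G)_{\geq A}$, so that $\pi_1(\A_2(G))=\pi_1(\N)*F$ remains free.

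I would then instantiate this as follows. For $M_{10}=A_6.2_3$ the extension is non-split and, as one checks on its semidihedral Sylow $2$-subgroup, all involutions already lie in $A_6$; hence $\S=\emptyset$ and $\A_2(M_{10})=\A_2(A_6)$ is free of rank $16$. For $PGL_2(9)$ take $H=A_6$: the outer involutions have dihedral centralizers in $A_6$ of order $8$ or $10$, whose Quillen complexes are respectively contractible (Proposition \ref{propOpGContractible}) and discrete, so all components are simply connected. For $\Aut(A_6)$ the choice $H=A_6$ is unavailable because $\Out(A_6)$ has $2$-rank $2$, so the complement $\S$ would contain rank-$2$ subgroups and fail to be an anti-chain; instead I would take $H=S_6$, an index-$2$ subgroup with $\pi_1(\A_2(S_6))$ free by Ksontini, which restores the anti-chain and reduces the problem to the centralizers $C_{S_6}(t)$ of the involutions $t\in\Aut(A_6)-S_6$.

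The hard part is precisely $G=\Aut(A_6)$: because $\Out(A_6)$ is a Klein four-group, the outer part of $\A_2(G)$ is genuinely two-dimensional, and the clean anti-chain complement that drives all the other reductions is recovered only after passing to the intermediate subgroup $S_6$ and then checking that the centralizer links $\A_2(C_{S_6}(t))$ have simply connected components. This last verification can be carried out either from the explicit structure of these centralizers or, as is done elsewhere in the paper for finitely many small groups, by direct computer calculation.
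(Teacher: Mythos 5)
Your proposal is correct, and for $n\neq 6$ it coincides with the paper's proof: $G=A_n$ or $S_n$, handled by Proposition \ref{propAnCase} and Theorem \ref{theoremKsontini}. The real difference is at $n=6$, where your treatment is both different and more complete. The paper's proof asserts that if $F^*(G)=A_6$ then $G\simeq A_6$, $S_6$ or $\Aut(A_6)$, and settles these ``by the above results or by computer calculations''; since $\Out(A_6)\simeq C_2\times C_2$ has three subgroups of order $2$, this enumeration omits the other two intermediate groups $PGL_2(9)$ and $M_{10}$, which are exactly the ones you add. Your arguments for these groups are structural rather than computational and reuse the paper's own machinery (Remark \ref{remarkInflationSubsapce}, Lemma \ref{lemmaLinkRetract}, and the anti-chain/van Kampen scheme of Theorem \ref{theorem2Components} and Section \ref{sectionsix}): for $M_{10}$ the semidihedral Sylow $2$-subgroup forces every involution into $A_6$, so $\A_2(M_{10})=\A_2(A_6)$ is free of rank $16$; for $PGL_2(9)$ and $\Aut(A_6)$ the complement $\S$ of $\N=\{X\in\A_2(G):X\cap H\neq 1\}$ is an anti-chain of order-$2$ subgroups and the links $\A_2(C_H(t))$ have simply connected components. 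Two small corrections that only strengthen your argument: the outer involutions of $PGL_2(9)$ have $C_{A_6}(t)\simeq D_{10}$ only (the dihedral centralizer of order $8$ belongs to the involutions inside $A_6$), which is harmless since you cover both cases; and for $\Aut(A_6)$ one can avoid any computer check, since every involution $t\in\Aut(A_6)-S_6$ lies in $PGL_2(9)-A_6$ (as $M_{10}-A_6$ has no involutions), whence $C_{S_6}(t)$ has order $20$ and contains $C_{A_6}(t)\simeq D_{10}$, so it is isomorphic to $D_{20}$ or $C_5\rtimes C_4$, and in either case $\A_2(C_{S_6}(t))$ has simply connected components (contractible by Proposition \ref{propOpGContractible}, respectively discrete). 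In sum, the paper's proof is shorter but rests on unstated computer calculations and, as written, on an incomplete list of the groups between $A_6$ and $\Aut(A_6)$; yours covers all five groups with self-contained arguments in the spirit of the paper's other freeness proofs.
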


\begin{proof}
If $n\neq 6$, then $G = A_n$ or $S_n$. In any case, $\pi_1(\A_2(G))$ is free by Proposition \ref{propAnCase} and Theorem \ref{theoremKsontini}. For $n = 6$, $\Out(A_6) = C_2\times C_2$ and $A_6 < S_6 < \Aut(A_6)$. If $F^*(G) = A_6$, then $G\simeq S_6, A_6$ or $\Aut(A_6)$. In either case, $\pi_1(\A_2(G))$ is free by the above results or by computer calculations.
\end{proof}

\section*{Appendix}

In order to compute fundamental groups of Quillen complexes, we used GAP (see \cite{Gap}), together with the GAP package HAP (\cite{Hap}). We wrote two functions that compute the order complex of the Bouc poset $\B_p(G)$ of a given finite group $G$ at a given prime $p$, and its fundamental group.

For example, to compute the fundamental group of $\A_3(A_{10})$ we execute the following code in GAP.

\begin{lstlisting}[language=GAP]
gap> G:=AlternatingGroup(10);;
gap> BpG:=allRadicalSubgroups(G,3);;
gap> pi1BpG:=pi1(BpG,G,3);;
gap> pi1BpG;
\end{lstlisting}

Here below we exhibit the codes of these functions.

\begin{lstlisting}[language=GAP]
allRadicalSubgroups:=function(G,p)
	local BpS, BpG, H, N, a, b, i, g, add, numCC,
	normalizers, reducedList, subgroups, transversals;
	
	# We calculate first the non-trivial p-radical subgroups
	# contained in a fixed Sylow p-subgroup.
	
	S:=SylowSubgroup(G,p);
	subgroups:=Set(Filtered(SubgroupsSolvableGroup(S), l-> Order(l) > 1));
	BpS:=Filtered(subgroups, l-> Size(l) = Size(PCore(Normalizer(G,l),p)));
	
	# The list reducedList will contain exactly one representative for
	# each conjugacy class of each element in BpS.
	
	reducedList:=[];
	for a in BpS do
		add:=true;
		i:=1;
		while add and i <= Size(reducedList) do
			b:=reducedList[i];
			if IsConjugate(G,a,b) then
				add:=false;
			fi;
			i:=i+1;
		od;
		if add then
			Add(reducedList,a);
		fi;
	od;
	
	# numCC is the number of conjugacy classes of non-trivial
	# p-radical subgroups.
	
	numCC:=Size(reducedList);
	
	# For each representative of non-trivial p-radical subgroups,
	# we calculate its normalizer and a transversal of this.
	
	normalizers:=List(reducedList, H-> Normalizer(G,H));
	transversals:=List(normalizers, N->
	List(RightTransversal(G,N),i->CanonicalRightCosetElement(N,i)));
	
	# Now we compute Bp(G).
	
	BpG:=[];
	
	for i in [1..numCC] do
		for g in transversals[i] do
			H:=reducedList[i]^g;
			Add(BpG, H);
		od;
	od;
	
	return BpG;
end;;

allMaximalChains:=function(G, S, BpS, elementList, orders)

	# This function returns all maximal chains of non-trivial
	# p-radical subgroups of G.

	local g, i, j, o, t, x, H, Q, N, R, T, lastPoint, partialChain, 
	properSubgroups, radSubgroupsConj;
		
	N:=Normalizer(G,S);
	T:=List(RightTransversal(G,N), x-> CanonicalRightCosetElement(N,x));
	R:=[];
	o:=PositionSorted(orders, Size(S));
	
	for g in T do		
		Q:=[[ [o,PositionSorted(elementList[o],S^g)] ]];
		t:=1;
		radSubgroupsConj:=List(BpS, x-> x^g);
		while t<= Size(Q) do
			partialChain:=Q[t];
			lastPoint:=partialChain[Size(partialChain)];
			i:=lastPoint[1];
			j:=lastPoint[2];
			H:=elementList[i][j];
			
			properSubgroups:=Filtered(radSubgroupsConj, x-> 
			Size(x) < Size(H) and IsSubset(H, GeneratorsOfGroup(x)));
			
			properSubgroups:=List(properSubgroups, x->
			[PositionSorted(orders, Size(x)),
			PositionSorted(elementList[PositionSorted(orders,Size(x))],x)]);
			
			for x in properSubgroups do
				Add(Q, Concatenation(partialChain,[x]));
			od;
			t:=t+1;
		od;
		R:=Concatenation(R,Q);
	od;
	return R;
end;;

pi1:=function(BpG,G,p)
	local i, o, x, R, S, BpS, elementList, orders;
	
	S:=SylowSubgroup(G,p);
	BpS:=Filtered(BpG, x-> IsSubset(S, GeneratorsOfGroup(x)));
	
	orders:=Unique(List(BpS, Size));
	Sort(orders);
	
	elementList:=List(orders, x-> []);
	
	for o in orders do
		i:=PositionSorted(orders, o);
		elementList[i]:=Set(Filtered(BpG, x->Size(x) = o));
	od;
	
	# We calculate all maximal chains of BpG.
	
	R:=allMaximalChains(G, S, BpS, elementList, orders);
	
	# Finally, we compute the fundamental group of the
	# simplicial complex generated by these maximal chains.
	
	return FundamentalGroup(SimplicialComplex(R));
end;;
\end{lstlisting}

\end{document}